\crefname{remark}{Remark}{Remarks}
\Crefname{remark}{Remark}{Remarks}
\crefname{assumption}{Assumption}{Assumptions}
\Crefname{assumption}{Assumptions}{Assumptions}
\Crefname{figure}{Fig.}{Figs.}
\crefname{figure}{Fig.}{Figs.}
\newcommand{\etal}{\emph{et al.}\ }
\newcommand{\T}{\mathrm{T}}
\newcommand{\offsetunderline}[1]{\underline{#1\mkern-2mu}\mkern2mu}
\renewcommand{\vec}[1]{\boldsymbol{#1}}
\newcommand{\vc}[1]{\underline{#1}}
\newcommand{\fn}[1]{\MakeUppercase{#1}}
\newcommand{\fvec}[1]{\vec{\fn{#1}}}
\newcommand{\mat}[1]{\offsetunderline{\offsetunderline{\mathit{#1}}}}
\newcommand{\set}[1]{\mathcal{\MakeUppercase{#1}}}
\newcommand{\opr}[1]{\mathcal{\MakeUppercase{#1}}}
\newcommand{\R}{\mathbb{R}} 
\newcommand{\N}{\mathbb{N}}
\renewcommand{\P}{\mathbb{P}}
\renewcommand{\S}{\mathbb{S}}
\newcommand{\nvolnodes}{N_q}
\newcommand{\nfacnodes}{N_{q_f}^{(\zeta)}}
\newcommand{\nelem}{N_e}
\newcommand{\nfac}{N_f}
\newcommand{\npoly}{{N_p^*}}
\title{Efficient Tensor-Product Spectral-Element Operators with the Summation-by-Parts Property on Curved Triangles and Tetrahedra\thanks{Some of the material presented in this article has appeared in: {\sc T.~Montoya and D.~W. Zingg}, {\em Stable and conservative high-order methods on triangular elements using tensor-product summation-by-parts operators}, Eleventh International Conference on Computational Fluid Dynamics, 2022.}\funding{This work was supported by the Natural Sciences and Engineering Research Council of Canada (NSERC), the Ontario Graduate Scholarship program, and the University of Toronto.}}
\author{Tristan Montoya\thanks{Institute for Aerospace Studies, University of Toronto, 4925 Dufferin St, Toronto, ON M3H 5T6, Canada
(\email{tristan.montoya@mail.utoronto.ca}, \email{david.zingg@utoronto.ca}).}
\and David W. Zingg\footnotemark[2]}
\begin{document}
\maketitle

\begin{abstract}
We present an extension of the summation-by-parts (SBP) framework to tensor-product spectral-element operators in collapsed coordinates. The proposed approach enables the construction of provably stable discretizations of arbitrary order which combine the geometric flexibility of unstructured triangular and tetrahedral meshes with the efficiency of sum-factorization algorithms. Specifically, a methodology is developed for constructing triangular and tetrahedral spectral-element operators of any order which possess the SBP property (i.e.\ satisfying a discrete analogue of integration by parts) as well as a tensor-product decomposition. Such operators are then employed within the context of discontinuous spectral-element methods based on nodal expansions collocated at the tensor-product quadrature nodes as well as modal expansions employing Proriol-Koornwinder-Dubiner polynomials, the latter approach resolving the time step limitation associated with the singularity of the collapsed coordinate transformation. Energy-stable formulations for curvilinear meshes are obtained using a skew-symmetric splitting of the metric terms, and a weight-adjusted approximation is used to efficiently invert the curvilinear modal mass matrix. The proposed schemes are compared to those using non-tensorial multidimensional SBP operators, and are found to offer comparable accuracy to such schemes in the context of smooth linear advection problems on curved meshes, but at a reduced computational cost for higher polynomial degrees.
\end{abstract}

\begin{keywords}
Spectral-element, summation-by-parts, tensor-product, triangles, tetrahedra
\end{keywords}

\begin{MSCcodes}
65M12, 65M60, 65M70
\end{MSCcodes}

\section{Introduction}\label{sec:intro}
High-fidelity simulations of multiscale phenomena in science and engineering governed by time-dependent conservation laws require efficient, robust, and automated numerical methods suitable for complex geometries. Discontinuous spectral-element methods (DSEMs)\footnote{In this work, we use the term \emph{spectral-element method} (SEM) to refer to any approximation achieving high-order accuracy through the use of multiple degrees of freedom within a given element.} provide a promising approach to meeting these needs, as they are amenable to performant algorithms on modern hardware and are flexible in their support for local adaptation to geometric and solution features by varying the element size (i.e.\ $h$-adaptivity) or by varying the polynomial degree of the approximation within each element (i.e.\ $p$-adaptivity). While robustness has traditionally been a concern for such methods, modern formulations based on the summation-by-parts (SBP) property produce mathematical guarantees that discretizations will respect certain properties of the partial differential equations they approximate, facilitating the construction of provably stable and conservative high-order methods for a wide variety of problems. 
\par
The SBP approach, originally proposed by Kreiss and Scherer in the context of finite-difference methods \cite{kreiss_scherer_sbp_74}, has proven instrumental in bringing about a recent paradigm shift in the construction and analysis of linearly and nonlinearly stable DSEMs for conservation laws (see, for example, Chen and Shu \cite{chen_shu_dgsbp_review_19} and Gassner and Winters \cite{gassner_winters_novel_robust_dg_21} for reviews of such developments). Although the multidimensional SBP property introduced by Hicken \etal \cite{hicken_mdsbp_16} facilitates the construction of provably stable schemes suitable for complex geometries and solution adaptivity through the use of triangular and tetrahedral elements, the operators which constitute such discretizations lack a tensor-product structure. As a consequence, such schemes are not amenable to the \emph{sum-factorization} algorithms commonly employed on quadrilateral and hexahedral elements, wherein tensor-product operators are applied in a dimension-by-dimension manner, a highly efficient approach originating with the work of Orszag in the context of spectral methods \cite{orszag_spectral_complex_geometries_80}. Considering a polynomial spectral-element approximation of degree $p$ in $d$ spatial dimensions, local operations such as spatial differentiation are typically of $O(p^{2d})$ complexity when no such tensor-product structure is exploited. Hence, noting that sum factorization generally results in algorithms of $O(p^{d+1})$ complexity, the benefits of existing triangular and tetrahedral SBP operators with respect to geometric flexibility and suitability for adaptation are obtained at the expense of a significantly increased operation count relative to tensor-product operators on quadrilaterals and hexahedra, particularly at higher polynomial degrees.

\begin{figure}[!t]
\centering
\begin{subfigure}[c]{0.277\textwidth}
\includegraphics[width=0.84\textwidth]{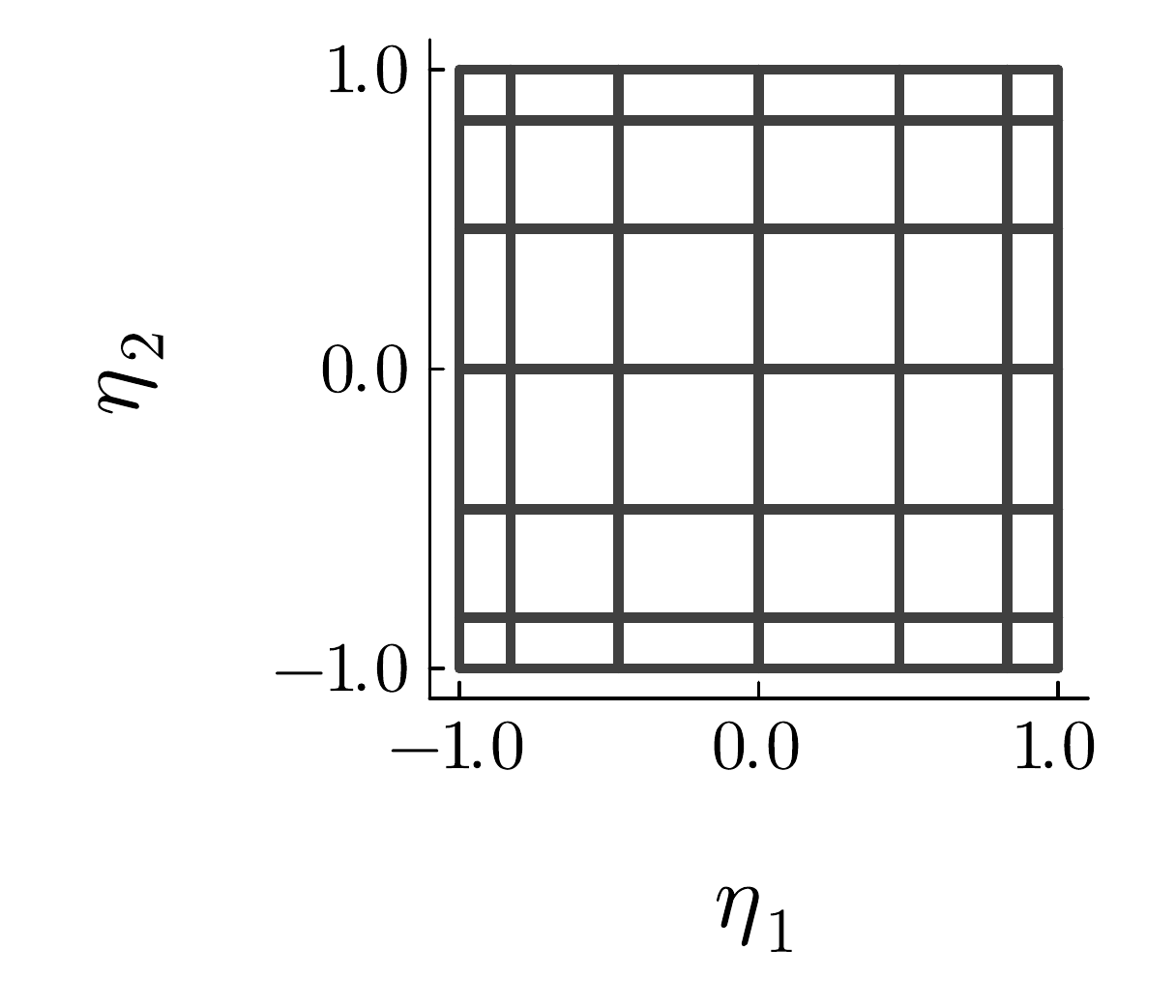}
\end{subfigure}~
\begin{subfigure}[c]{0.15\textwidth}
\centering
\Large $\longrightarrow$\\$\vec{\chi}$
\end{subfigure}~
\begin{subfigure}[c]{0.277\textwidth}
\includegraphics[width=0.84\textwidth]{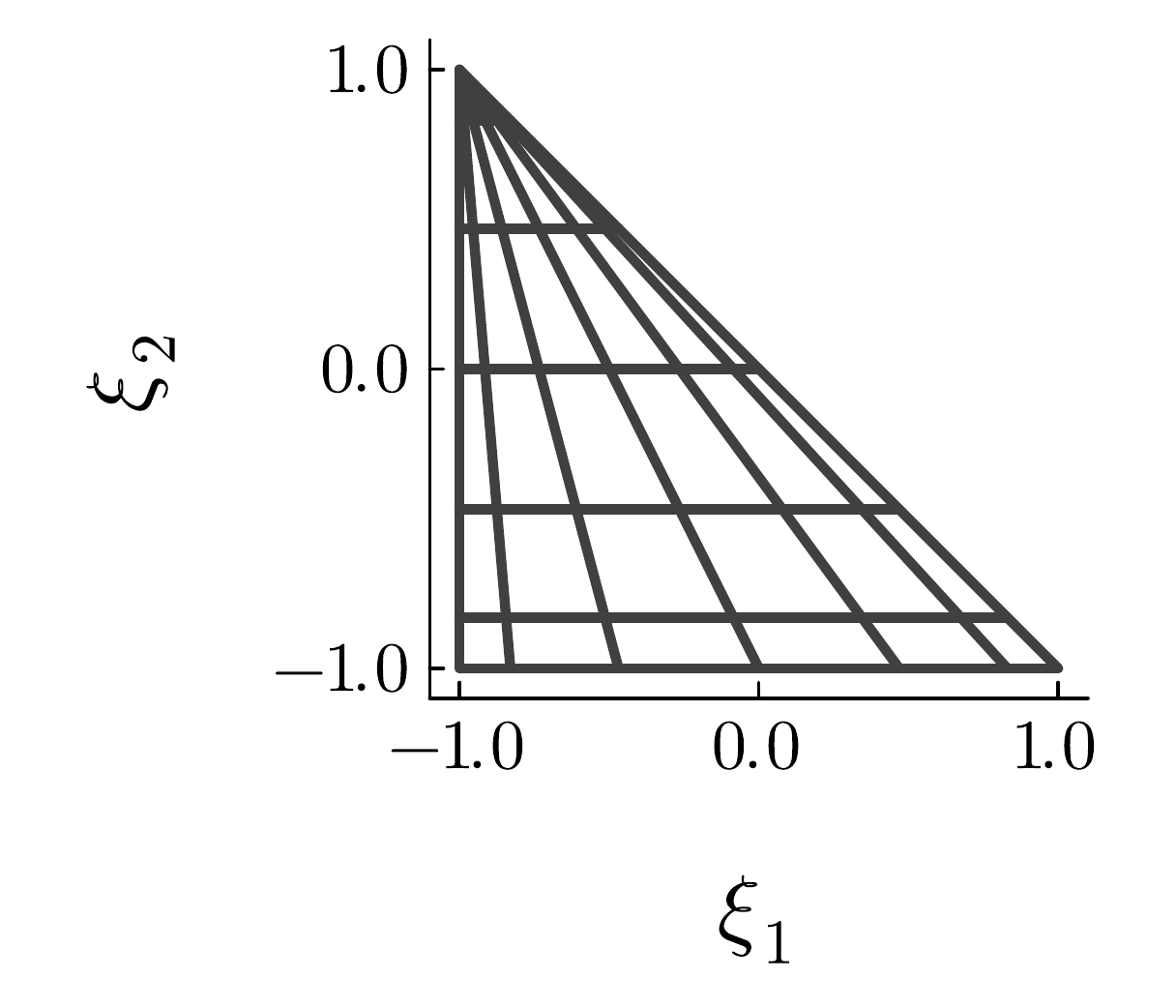}
\end{subfigure}\\
\begin{subfigure}[c]{0.277\textwidth}
\includegraphics[width=\textwidth]{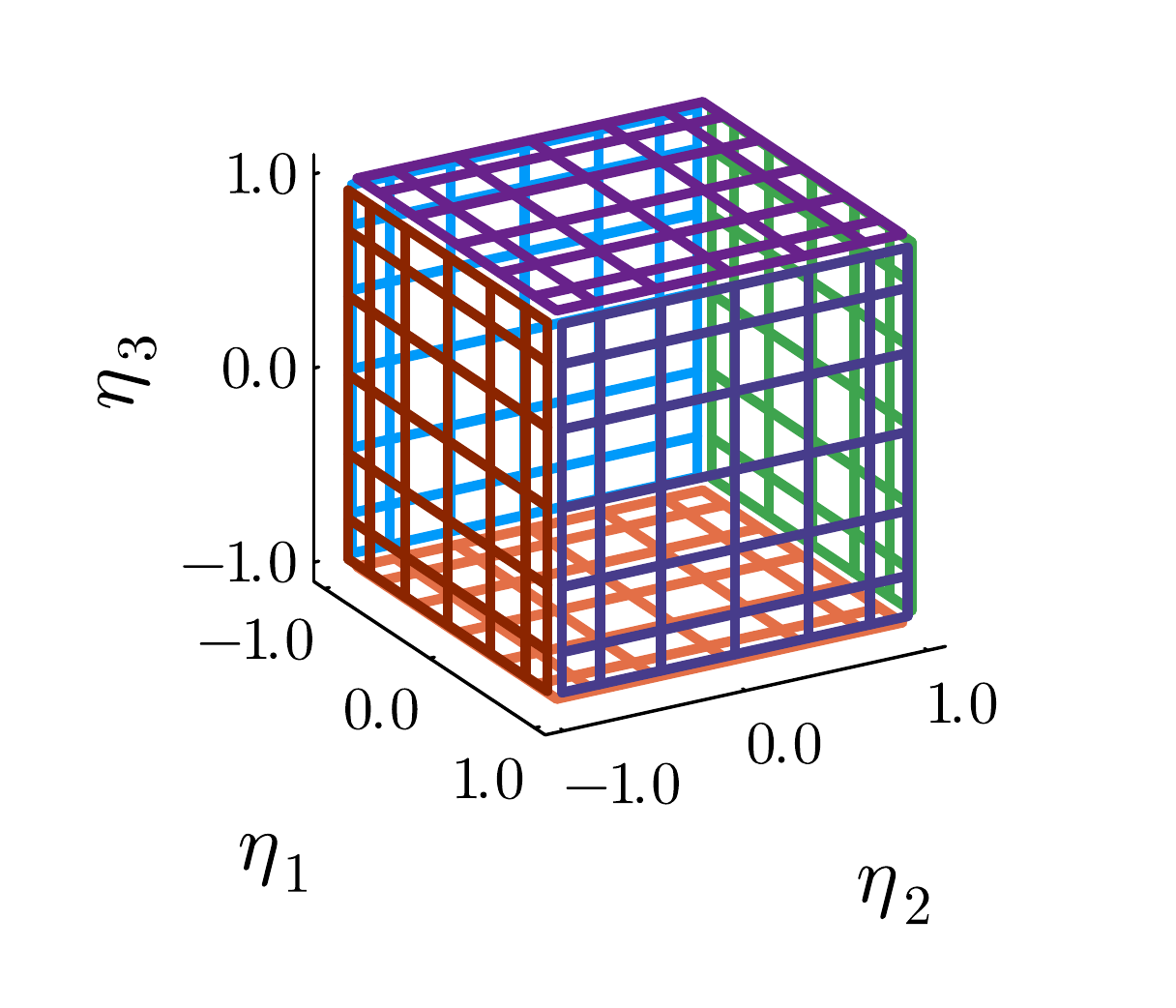}
\end{subfigure}~
\begin{subfigure}[c]{0.15\textwidth}
\centering
\Large $\longrightarrow$\\$\vec{\chi}$
\end{subfigure}~
\begin{subfigure}[c]{0.277\textwidth}
\includegraphics[width=\textwidth]{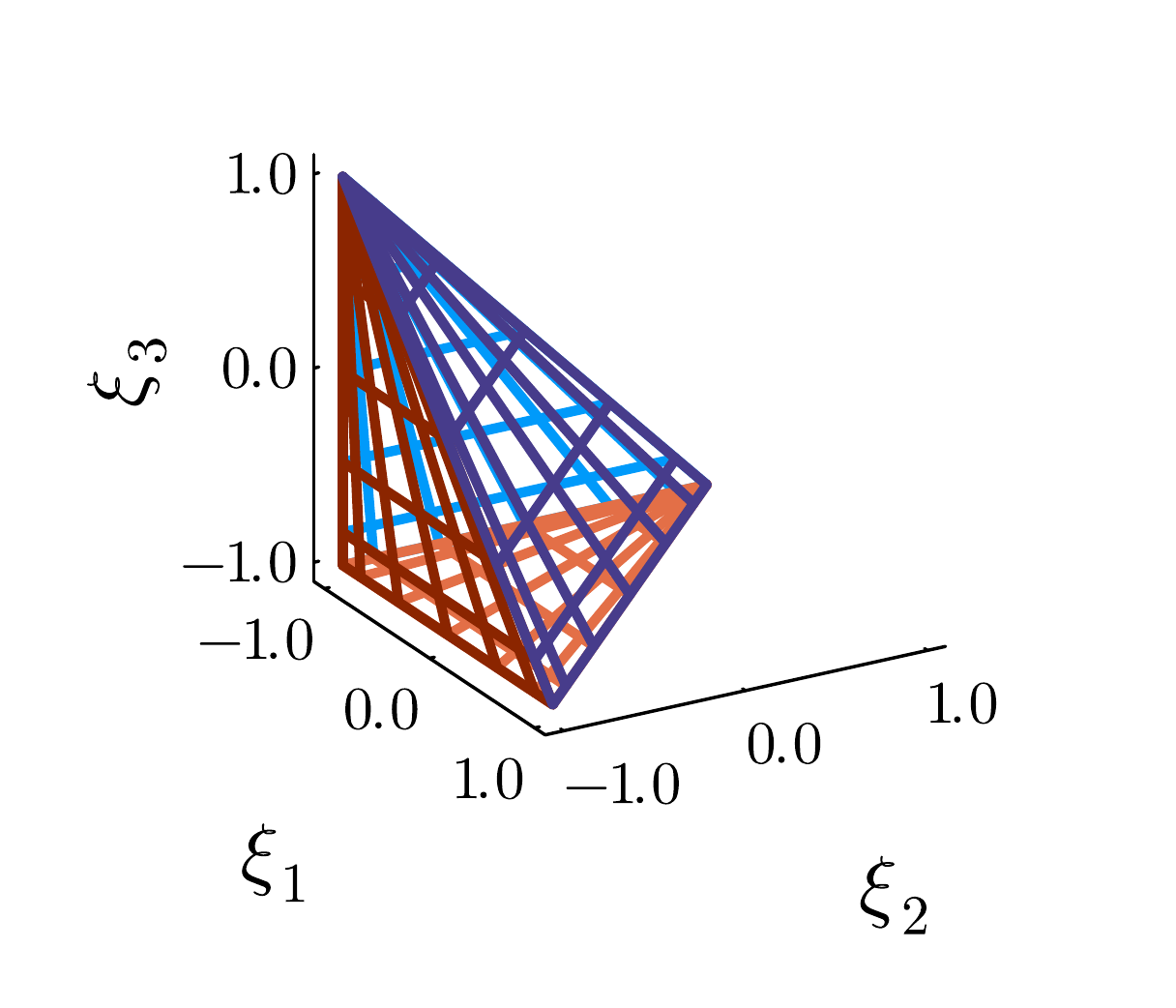}
\end{subfigure}
\caption{Illustration of the collapsed coordinate transformation $\vec{\xi} = \vec{\chi}(\vec{\eta})$ from the square to the reference triangle (top) and from the cube to the reference tetrahedron (bottom)}\label{fig:mapping}
\end{figure}

The use of a \emph{collapsed coordinate transformation} as shown in \cref{fig:mapping}, sometimes referred to as a \emph{Duffy transformation} \cite{duffy_collapsed_coordinates_82}, allows for the geometric flexibility of simplicial elements (as well as other element types such as prisms and pyramids) to be exploited alongside the aforementioned computational benefits of a tensor-product operator structure. Beginning with theoretical development by Dubiner \cite{dubiner_spectral_triangle_91} and the subsequent application of such ideas to continuous Galerkin (CG) methods \cite{sherwin_karniadakis_triangular_sem_95,sherwin_karniadakis_tetrahedra_hp_fem_96} as well as discontinuous Galerkin (DG) methods \cite{lomtev_karniadakis_dg_99,kirby_spectral_hp_dg_hybrid_grids_00}, collapsed-coordinate formulations have proven essential to the efficient use of triangular and tetrahedral SEMs at higher polynomial degrees. Moreover, such schemes have been shown in recent years to be amenable to efficient implementation on modern hardware (see, for example, Moxey \etal \cite{moxey_matrix_free_triangles_20}). Despite these successes, which are exemplified by the open-source spectral-element framework \emph{Nektar}\texttt{++} \cite{cantwell_nektar_2015}, existing tensor-product formulations in collapsed coordinates are subject to the aforementioned stability issues afflicting conventional SEM discretizations, which generally rely on \emph{ad hoc} techniques such as modal filtering (which requires careful tuning in order to avoid excessively contaminating the numerical solution \cite[section 5.3]{hesthaven08}) or over-integration (which incurs additional cost while still not ensuring stability \cite{winters_moura_mengaldo_overintegration_vs_splitform_18}) to achieve robustness in practice. Discretizations based on the SBP property, by contrast, do not require such problem-dependent remedies to ensure robustness, and may be systematically applied to complex problems with stability and conservation properties guaranteed by construction, without reliance on the exact integration of variational forms to establish such results.\footnote{This is important due to the aforementioned cost of over-integration as well as the fact that the variational formulations of many conservation laws of practical interest (for example, the conservative forms of the compressible Euler and Navier-Stokes equations) involve non-polynomial terms which are impractical, if not impossible, to integrate exactly using standard quadrature rules.}
\par
Motivated by the desire for efficient, provably stable discretizations of arbitrary order on triangles and tetrahedra, this paper presents a comprehensive extension of the SBP framework to tensor-product spectral-element operators in collapsed coordinates. Although the focus of this paper is on energy-stable discretizations for linear problems, the proposed operators also constitute the fundamental building blocks of entropy-stable schemes for nonlinear problems. Our main contributions are outlined below.
\begin{itemize}
\item A methodology is presented for constructing tensor-product spectral-element operators of any order with the SBP property on the reference triangle and tetrahedron, which are used within a split formulation to obtain conservative, free-stream preserving, and energy-stable DSEMs on curvilinear meshes. 
\item Nodal and modal variants of the above approach are proposed, which differ primarily due to the presence of an additional polynomial projection in the latter case, which alleviates the explicit time step restriction resulting from the singular nature of the collapsed coordinate transformation.
\item In the case of the modal formulation, we exploit the ``warped'' tensor-product structure of the Proriol-Koornwinder-Dubiner polynomial basis functions \cite{proriol_polynomials_57,koornwinder_orthogonal_polynomials_75,dubiner_spectral_triangle_91} alongside a weight-adjusted approximation to the inverse of the curvilinear mass matrix proposed by Chan \etal \cite{chan_weight_adjusted_dg_curvilinear_17}, resulting in an explicit algorithm for computing the time derivative on a given element in $O(p^{d+1})$ floating-point operations, with minimal storage requirements. 
\item Considering smooth linear advection problems on curved triangular and tetrahedral meshes, the proposed nodal and modal schemes both exhibit optimal $O(h^{p+1})$ convergence with respect to the element size $h$ when the mesh is refined with the degree $p$ held fixed and an upwind numerical flux is used. In the case of the modal formulation with an upwind or central numerical flux, exponential convergence is observed when $p$ is increased with $h$ held fixed.
\item Compared to methods of the same degree employing symmetric nodal sets on triangles and tetrahedra, the proposed schemes are found to be similar in accuracy and (in the case of the modal approach) spectral radius, but require significantly fewer floating-point operations at higher polynomial degrees.
\end{itemize}
We now describe the structure of the remainder of this paper. In \cref{sec:preliminaries}, we review the SBP framework and associated notation employed throughout this work. Tensor-product SBP operators on the reference triangle and tetrahedron are then introduced in \cref{sec:tri,sec:tet}, respectively, and are used to construct nodal and modal DSEM formulations on curvilinear meshes in \cref{sec:dsem}. We describe strategies for efficient implementation of the proposed schemes in \cref{sec:implementation}. Numerical experiments are presented and discussed in \cref{sec:numerical}, and concluding remarks are provided in \cref{sec:conclusions}.

\section{Preliminaries}\label{sec:preliminaries}
\subsection{Notation} 
The notation in this paper follows that introduced by the authors in \cite{montoya_tensor_product_22} and \cite{montoya_unifying_21}. Single underlines are used to denote vectors (treated as column matrices), whereas double underlines denote matrices. Symbols in bold are used specifically to denote Cartesian (i.e.\ spatial) vectors, for which we employ the usual dot product $\vec{x} \cdot \vec{y} \coloneqq x_1y_1 + \cdots + x_dy_d$ and Euclidean norm $\lVert \vec{x} \rVert^2 \coloneqq \vec{x} \cdot \vec{x}$, and del operator $\nabla_{\vec{x}}\coloneqq [\partial/\partial x_1 ,\ldots, \partial/\partial x_d]^\T$. The symbols $\R$, $\R^+$, $\R_0^+$, $\N$, $\N_0$, and $\S^{d-1}$ denote the real numbers, the positive real numbers, the non-negative real numbers, the natural numbers (excluding zero), the natural numbers including zero, and the unit $(d-1)$-sphere $\S^{d-1} \coloneqq \{ \vec{x} \in \R^d : \lVert \vec{x} \rVert = 1\}$, respectively. The symbols $\vc{0}^{(N)}$ and $\vc{1}^{(N)}$ are reserved for vectors of length $N \in \N$ containing all zeros and all ones, respectively, and we use $\{1:N\}$ as shorthand for $\{1,2,\ldots,N\}$. Given a bounded domain $\set{D} \subset \R^d$, we use $\partial\set{D}$ to denote its boundary and $\bar{\set{D}} \coloneqq \set{D} \cup \partial \set{D}$ to denote its closure; the interior of a closed domain $\set{D}$ is then given by $\mathring{\set{D}} \coloneqq \set{D} \setminus \partial\set{D}$. The space of polynomials of maximum total degree $p \in \N_0$ on $\set{D}$ is then defined as $\P_p(\set{D}) \coloneqq \operatorname{span}\{\set{D} \ni \vec{x} \mapsto x_1^{\alpha_1} \cdots x_d^{\alpha_d} : \vec{\alpha} \in \set{N}(p)\}$ in terms of the multi-index set $\set{N}(p) \coloneqq \{\vec{\alpha} \in \N_0^d : \alpha_1 + \cdots + \alpha_d \leq p\}$, which is of cardinality $
\npoly \coloneqq \binom{p+d}{d}$. Other relevant notational conventions and definitions are introduced as they appear. 

\subsection{Summation-by-parts operators}
Whether or not explicit in their construction, existing energy-stable and entropy-stable spectral-element methods on simplicial elements typically rely on the multidimensional summation-by-parts property introduced by Hicken \etal \cite{hicken_mdsbp_16}, who proposed a definition equivalent to the following.
\begin{definition}[Nodal SBP operator]\label{def:sbp}
Let $\hat{\Omega} \subset \R^d$ denote a compact, connected domain, on which we define a set of $\nvolnodes \in \N$ distinct nodes $\{\vec{\xi}^{(i)}\}_{i\in \{1:\nvolnodes\}} \subset \hat{\Omega}$, and let $\vc{u} \coloneqq [\fn{u}(\vec{\xi}^{(1)}),\ldots, \fn{u}(\vec{\xi}^{(\nvolnodes)})]^\T$ and $\vc{v} \coloneqq [\fn{v}(\vec{\xi}^{(1)}),\ldots, \fn{v}(\vec{\xi}^{(\nvolnodes)})]^\T$ contain the nodal values of functions $\fn{u},\fn{v} : \hat{\Omega} \to \R$. A matrix $\mat{D}^{(m)} \in \R^{\nvolnodes \times \nvolnodes}$ approximating the partial derivative $\partial/\partial \xi_m$ is then a \emph{nodal SBP operator} of degree $p \in \N_0$ if it satisfies
\begin{equation}\label{eq:accuracy}
\mat{D}^{(m)}\vc{v} = \big[(\partial\fn{v}/\partial\xi_m)(\vec{\xi}^{(1)}), \ldots, (\partial\fn{v}/\partial\xi_m)(\vec{\xi}^{(\nvolnodes)}) \big]^\T, \quad \forall \, \fn{v} \in \P_p(\hat{\Omega}),
\end{equation}
and may be decomposed as $\mat{D}^{(m)} = \mat{W}^{-1}\mat{Q}^{(m)}$ such that $\mat{W}\in \R^{\nvolnodes\times \nvolnodes}$ is symmetric positive-definite (SPD), $\mat{Q}^{(m)}\in \R^{\nvolnodes\times \nvolnodes}$ satisfies $
\mat{Q}^{(m)} + (\mat{Q}^{(m)})^\T = \mat{E}^{(m)}$, and 
\begin{equation}\label{eq:facet_accuracy}
\vc{u}^\T\mat{E}^{(m)}\vc{v} = \int_{\partial\hat{\Omega}} \fn{u}(\vec{\xi})\fn{v}(\vec{\xi}) \hat{n}_m(\vec{\xi}) \, \dd\hat{s}, \quad \forall \, \fn{u},\fn{v} \in \P_r(\hat{\Omega}),
\end{equation}
holds for some $r \geq p$, where $\hat{\vec{n}} : \partial\hat{\Omega} \to \S^{d-1}$ is the outward unit normal vector to $\hat{\Omega}$.
\end{definition}
The SBP property serves as a discrete analogue of integration by parts, as given by
\begin{equation}
\begin{alignedat}{3}
\underset{\rotatebox{90}{$\, \approx $}}{\int_{\hat{\Omega}} \fn{u}(\vec{\xi}) \pdv{\fn{v}(\vec{\xi})}{\xi_m} \, \dd \vec{\xi} }  &+ \underset{\rotatebox{90}{$\, \approx $}}{\int_{\hat{\Omega}}\pdv{\fn{u}(\vec{\xi})}{\xi_m} \fn{v}(\vec{\xi})  \, \dd \vec{\xi}} & = & \  \underset{\rotatebox{90}{$\, \approx $}}{ \int_{\partial\hat{\Omega}} \fn{u}(\vec{\xi})\fn{v}(\vec{\xi}) \hat{n}_m(\vec{\xi}) \, \dd\hat{s}}. \\ 
\vc{u}^\T\mat{Q}^{(m)}\vc{v} \qquad &+ \quad \vc{u}^\T\big(\mat{Q}^{(m)}\big)^\T\vc{v} & \ = & \qquad\quad \vc{u}^\T\mat{E}^{(m)}\vc{v}
\end{alignedat}
\end{equation}
We refer to any SBP operator for which the associated matrix $\mat{W}$ is diagonal as a \emph{diagonal-norm} SBP operator, referring to the role of such a matrix in defining a discrete norm in which stability may be proven. In such a case, the diagonal entries of $\mat{W}$ constitute the weights $\{\omega^{(i)}\}_{i \in \{1:\nvolnodes\}} \subset \R^+$ for a quadrature rule satisfying
\begin{equation}\label{eq:sbp_quadrature}
\sum_{i=1}^{\nvolnodes} \fn{v}(\vec{\xi}^{(i)})\, \omega^{(i)} = \int_{\hat{\Omega}} \fn{v}(\vec{\xi})\, \dd \vec{\xi}, \quad \forall \, \fn{v} \in \P_\tau(\hat{\Omega}),
\end{equation}
where $\tau \geq 2p - 1$ for any diagonal-norm SBP operator of degree $p$ \cite[Theorem 3.2]{hicken_mdsbp_16}. 

\subsection{Decomposition of the boundary operators}\label{sec:sbp_boundary}
To employ SBP operators satisfying the conditions of \cref{def:sbp} within the context of an SEM, we construct such operators on a canonical reference element and use a coordinate transformation to obtain operators on each physical element of the mesh. Let us now assume that the reference element $\hat{\Omega} \subset \R^d$ is a polytope and partition its boundary into $\nfac \in \N$ closed subsets $\{ \hat{\Gamma}^{(\zeta)}\}_{\zeta \in \{1:\nfac\}}$ with disjoint interiors, referred to as \emph{facets}, on which the outward unit normal $\vec{n}^{(\zeta)} \in \S^{d-1}$ is assumed to be constant. On each facet, we introduce $\nfacnodes \in \N$  quadrature nodes and corresponding weights, which are given by
\begin{equation}
\{\vec{\xi}^{(\zeta,i)}\}_{i \in \{1:\nfacnodes\}} \subset \hat{\Gamma}^{(\zeta)}, \quad  \{\omega^{(\zeta,i)}\}_{i \in \{1:\nfacnodes\}} \subset \R_0^+.
\end{equation}
As in Del Rey Fern\'andez \etal \cite[section 3]{delrey_mdsbp_sat_18}, we then restrict our attention to the class of SBP operators for which the boundary matrices take the form
\begin{equation}\label{eq:e_decomp}
\mat{E}^{(m)} \coloneqq \sum_{\zeta=1}^{\nfac} \hat{n}_m^{(\zeta)} \big(\mat{R}^{(\zeta)}\big)^\T \mat{B}^{(\zeta)}\mat{R}^{(\zeta)},
\end{equation}
where $\mat{B}^{(\zeta)} \in \R^{\nfacnodes\times\nfacnodes}$ is a diagonal matrix with entries $B_{ij}^{(\zeta)} \coloneqq \omega^{(\zeta,i)}\delta_{ij}$, and $\mat{R}^{(\zeta)} \in \R^{\nfacnodes \times \nvolnodes}$ is an interpolation/extrapolation operator of degree $r^{(\zeta)} \geq p$, satisfying
\begin{equation}\label{eq:extrap_acc}
\mat{R}^{(\zeta)}\vc{v} = \big[\fn{v}(\vec{\xi}^{(\zeta,1)}), \ldots, \fn{v}(\vec{\xi}^{(\zeta,\nfacnodes)})\big]^\T, \quad \forall \, \fn{v} \in \P_{r^{(\zeta)}}(\hat{\Omega}).
\end{equation}
Such a decomposition is critical for the weak imposition of boundary conditions and interface coupling for discontinuous approximation spaces, as in the case of a DSEM.

\subsection{Curvilinear meshes}\label{sec:meshmap}
Let $\Omega \subset \R^d$ denote an open, bounded, connected spatial domain with a piecewise smooth boundary $\partial\Omega$ and outward unit normal vector $\vec{n} : \partial\Omega \to \S^{d-1}$. The domain is partitioned into a mesh $\{\Omega^{(\kappa)}\}_{\kappa \in \{1:\nelem\}}$ consisting of $\nelem \in \N$ compact, connected elements of characteristic size $h \in \R^+$, satisfying
\begin{equation}
\bigcup_{\kappa=1}^{\nelem} \Omega^{(\kappa)} = \bar{\Omega} \quad \text{and} \quad \mathring{\Omega}^{(\kappa)} \cap \mathring{\Omega}^{(\nu)} = \emptyset, \quad \forall \, \kappa \neq \nu.
\end{equation}
Each element is further assumed to be the image of a polytopal reference element $\hat{\Omega} \subset \R^d$ under a smooth, time-invariant mapping $\fvec{x}^{(\kappa)} :\hat{\Omega} \rightarrow \Omega^{(\kappa)}$. The Jacobian of such a mapping is denoted by $\nabla_{\vec{\xi}}\fvec{x}^{(\kappa)}(\vec{\xi}) \in \R^{d\times d}$, where the determinant $\fn{J}^{(\kappa)}(\vec{\xi}) \coloneqq \operatorname{det}(\nabla_{\vec{\xi}}\fvec{x}^{(\kappa)}(\vec{\xi}))$ is assumed to be positive for all $\vec{\xi} \in \hat{\Omega}$. The outward unit normal vector $\vec{n}^{(\kappa,\zeta)} : \Gamma^{(\kappa,\zeta)} \to \S^{d-1}$ to the facet $\Gamma^{(\kappa,\zeta)} \subset \partial\Omega^{(\kappa)}$, which is the image of $\hat{\Gamma}^{(\zeta)} \subset \partial\hat{\Omega}$ under the mapping $\fvec{x}^{(\kappa)}$, is then given according to Nanson's formula,
\begin{equation}\label{eq:nanson}
\fn{J}^{(\kappa,\zeta)}(\vec{\xi})\vec{n}^{(\kappa,\zeta)}(\fvec{x}^{(\kappa)}(\vec{\xi})) = \fn{J}^{(\kappa)}(\vec{\xi})(\nabla_{\vec{\xi}}\fvec{x}^{(\kappa)}(\vec{\xi}))^{-\T}\vec{n}^{(\zeta)},
\end{equation}
where we define $\fn{J}^{(\kappa,\zeta)}(\vec{\xi}) \coloneqq \lVert\fn{J}^{(\kappa)}(\vec{\xi})(\nabla_{\vec{\xi}}\fvec{x}^{(\kappa)}(\vec{\xi}))^{-\T}\rVert$. We then form the diagonal matrices $\mat{J}^{(\kappa)}, \mat{\mathit{\Lambda}}^{(\kappa,l,m)} \in \R^{\nvolnodes\times\nvolnodes}$ and $\mat{J}^{(\kappa,\zeta)}, \mat{N}^{(\kappa,\zeta,m)} \in \R^{\nfacnodes \times \nfacnodes}$ given by
\begin{equation}\label{eq:geometric_factors}
\begin{alignedat}{2}
J_{ij}^{(\kappa)} &\coloneqq \fn{J}^{(\kappa)}(\vec{\xi}^{(i)})\delta_{ij}, \quad &\mathit{\Lambda}_{ij}^{(\kappa,l,m)} &\coloneqq [\fn{J}^{(\kappa)}(\vec{\xi}^{(i)})(\nabla_{\vec{\xi}}\fvec{x}^{(\kappa)}(\vec{\xi}^{(i)}))^{-1}]_{lm} \delta_{ij},\\
J_{ij}^{(\kappa,\zeta)} &\coloneqq \fn{J}^{(\kappa,\zeta)}(\vec{\xi}^{(\zeta,i)})\delta_{ij}, \quad & N_{ij}^{(\kappa,\zeta,m)} &\coloneqq n_m^{(\kappa,\zeta)}(\fvec{x}^{(\kappa)}(\vec{\xi}^{(\zeta,i)}))\delta_{ij},
\end{alignedat}
\end{equation}
containing the values of the geometric factors and normals at the quadrature nodes. 

\subsection{Approximation on the physical element}\label{sec:sbp_phys}
Given any diagonal-norm SBP operator on the reference element, we can use the split (i.e.\ skew-symmetric) formulation proposed by Crean \etal \cite[section 5]{crean_entropystable_sbp_euler_curved_17} to approximate $\partial / \partial x_m$ under the mapping $\vec{x} = \fvec{x}^{(\kappa)}(\vec{\xi})$ using the operator $\mat{D}^{(\kappa,m)} \coloneqq (\mat{W}\mat{J}^{(\kappa)})^{-1}\mat{Q}^{(\kappa,m)}$, where we define
\begin{equation}\label{eq:q_decomp}
\mat{Q}^{(\kappa,m)} \coloneqq  \frac{1}{2}\sum_{l=1}^d\Big(\mat{\Lambda}^{(\kappa,l,m)}\mat{W}\mat{D}^{(l)} - \big(\mat{D}^{(l)}\big)^\T\mat{W}\mat{\Lambda}^{(\kappa,l,m)}\Big) + \frac{1}{2}\mat{E}^{(\kappa,m)},
\end{equation}
with
\begin{equation}\label{eq:e_phys}
\mat{E}^{(\kappa,m)} \coloneqq \sum_{\zeta=1}^{\nfac}\big(\mat{R}^{(\zeta)}\big)^\T\mat{B}^{(\zeta)}\mat{J}^{(\kappa,\zeta)}\mat{N}^{(\kappa,\zeta,m)}\mat{R}^{(\zeta)}.
\end{equation}
While polynomials of degree $p$ on the physical element are not, in general, differentiated exactly when $\fvec{x}^{(\kappa)}$ is not an affine mapping, it was shown in \cite[Theorem 9]{crean_entropystable_sbp_euler_curved_17} that the error in the resulting split-form approximation to $\partial/\partial x_m$ is  of $O(h^p)$ under suitable assumptions. In addition to the SBP property on the physical element,
\begin{equation}
\mat{Q}^{(\kappa,m)} + \big(\mat{Q}^{(\kappa,m)}\big)^\T = \mat{E}^{(\kappa,m)},\quad \forall \, m \in \{1:d\},
\end{equation}
which follows directly from \cref{eq:q_decomp}, proofs of conservation, free-stream preservation, and nonlinear stability (i.e.\ entropy stability) typically require the \emph{discrete metric identities} to be satisfied, as given by
\begin{equation}\label{eq:discrete_metric_identities}
\mat{D}^{(\kappa,m)}\vc{1}^{(\nvolnodes)} = \vc{0}^{(\nvolnodes)}, \quad \forall \, m \in \{1:d\}.
\end{equation}
Assuming that the metric terms and normals in \cref{eq:geometric_factors} are computed analytically, it was shown in \cite[Theorems 6 and 8]{crean_entropystable_sbp_euler_curved_17} that \cref{eq:discrete_metric_identities} holds for physical operators constructed as in \cref{eq:q_decomp} if the mapping is a polynomial $\fvec{x}^{(\kappa)} \in [\P_{p_g}(\hat{\Omega})]^d$ of degree $p_g \leq p + 1$ in two dimensions or $p_g \leq \lfloor p/2 \rfloor + 1$ in three dimensions. Otherwise, approximations such as the conservative curl form proposed by Kopriva \cite[section 7]{kopriva_metric_identities_discontinuous_sem_curved_06} and extended to simplicial meshes by Chan and Wilcox \cite[section 5]{chan_wilcox_entropystable_curvilinear_19} or the optimization-based approach introduced in \cite[section 5.4]{crean_entropystable_sbp_euler_curved_17} may be used to ensure that \cref{eq:discrete_metric_identities} is satisfied.

\section{Tensor-product approximations on the reference triangle}\label{sec:tri}
We now turn to the main theoretical contribution of this paper, which is to construct a new class of SBP operators of arbitrary order employing tensor-product approximations in collapsed coordinates. We will begin with the triangular case in this section, describing the extension to tetrahedra in \cref{sec:tet}. The reference domain is taken here to be
\begin{equation}\label{eq:reference_triangle}
\hat{\Omega} \coloneqq \big\{\vec{\xi} \in [-1,1]^2 : \xi_1 + \xi_2 \leq 0  \big\},
\end{equation}
where, as a convention, we number the facets (i.e.\ edges of the triangle) as
\begin{equation}\label{eq:tri_facets}
\begin{gathered}
\hat{\Gamma}^{(1)} \coloneqq \big\{ \vec{\xi} \in \hat{\Omega} : \xi_2 = -1 \big\}, \quad \hat{\Gamma}^{(2)} \coloneqq \big\{ \vec{\xi} \in \hat{\Omega} : \xi_1 + \xi_2 = 0 \big\}, \\
\hat{\Gamma}^{(3)} \coloneqq \big\{ \vec{\xi} \in \hat{\Omega} : \xi_1 = -1 \big\}.
\end{gathered}
\end{equation}
The outward unit normal vectors to each facet are then given by
\begin{equation}\label{eq:normals_tri}
\hat{\vec{n}}^{(1)} = \mqty[0\\-1], \quad \hat{\vec{n}}^{(2)} = \mqty[1/\sqrt{2}\\ 1/\sqrt{2}], \quad \hat{\vec{n}}^{(3)} = \mqty[-1\\0].
\end{equation}
The proposed schemes are constructed based on a \emph{collapsed coordinate system} $\vec{\eta} \in [-1,1]^2$, from which any point can be mapped onto the \emph{reference coordinate system} $\vec{\xi} \in \hat{\Omega}$ through the mapping $\vec{\chi} : [-1,1]^2 \to \hat{\Omega}$ depicted in \cref{fig:mapping}, which is given by
\begin{equation}\label{eq:tri_mapping}
\vec{\chi}(\vec{\eta}) \coloneqq  \mqty[\tfrac{1}{2}(1+\eta_1)(1-\eta_2) - 1\\ \eta_2],
\end{equation}
where we adopt a similar notation to that employed by Karniadakis and Sherwin \cite[section 3.2]{karniadakis_sherwin_spectral_hp_element}. Such a mapping has an inverse defined for $\vec{\xi} \in \hat{\Omega} \setminus \{[-1,1]^\T\}$ by
\begin{equation}\label{eq:inverse_tri}
\vec{\chi}^{-1}(\vec{\xi}) \coloneqq  \mqty[ 2(1+\xi_1)/(1-\xi_2) - 1\\\xi_2].
\end{equation}
It is then straightforward to show that integrals on the reference element can be expressed in terms of the collapsed coordinate system as
\begin{equation}\label{eq:integral_tri}
\int_{\hat{\Omega}} \fn{v}(\vec{\xi}) \, \dd \vec{\xi} = \int_{-1}^1\int_{-1}^1 \fn{v}(\vec{\chi}(\vec{\eta})) \frac{1-\eta_2}{2}  \,\dd \eta_1\dd \eta_2.
\end{equation}
Similarly, partial derivatives with respect to each reference coordinate may be computed in terms of the collapsed coordinate system via the chain rule as
\begin{subequations}\label{eq:chain_rule_tri}
\allowdisplaybreaks
\begin{align}
\frac{\partial \fn{v}}{\partial \xi_1} (\vec{\chi}(\vec{\eta})) &= \frac{2}{1-\eta_2} \frac{\partial}{\partial \eta_1} \fn{v}(\vec{\chi}(\vec{\eta})), \\
\frac{\partial \fn{v}}{\partial \xi_2} (\vec{\chi}(\vec{\eta})) &= \bigg(\frac{1+\eta_1}{1-\eta_2} \frac{\partial}{\partial \eta_1} +   \frac{\partial}{\partial \eta_2}\bigg) \fn{v}(\vec{\chi}(\vec{\eta})),
\end{align}
\end{subequations}
where we note that such expressions are undefined at the top edge (i.e.\ $\eta_2 = 1$), which collapses onto the singular vertex $\vec{\xi} = [-1,1]^\T$ under the transformation in \cref{eq:tri_mapping}.

\subsection{Nodal sets and interpolants}\label{sec:interp_tri} Let $q_1,q_2 \in \N_0$ denote the degrees of the polynomial approximations with respect to $\eta_1$ and $\eta_2$, respectively, 
and define $q \coloneqq \min(q_1,q_2)$. Note that we allow for the use of different polynomial degrees in each direction, where such flexibility could be exploited, for example, within the context of anisotropic $p$-adaptivity. Considering Jacobi weight functions of the form $\varpi^{(a,b)}(\eta) \coloneqq (1-\eta)^{a} (1+\eta)^{b}$ for generality, we construct Gaussian quadrature rules with nodes and weights
\begin{equation}\label{eq:quadrature_2d}
\begin{alignedat}{4}
\{\eta_1^{(i)}\}_{i \in \{0:q_1\}} &\subset [-1,1], \quad &&\{\eta_2^{(i)}\}_{i \in \{0:q_2\}} &&\subset [-1,1), \\
\{\omega_1^{(i)}\}_{i\in\{0:q_1\}} &\subset \R^+, \quad && \{\omega_2^{(i)}\}_{i\in\{0:q_2\}} &&\subset \R^+,
\end{alignedat}
\end{equation}
where the rule in the $\eta_m$ direction employs $q_m + 1$ distinct nodes and satisfies
\begin{equation}\label{eq:quad_1d}
\sum_{i=0}^{q_m} \fn{v}(\eta_m^{(i)})\, \omega_m^{(i)} = \int_{-1}^1 \fn{v}(\eta_m) \varpi^{(a_m,b_m)}(\eta_m)\, \dd \eta_m, \quad \forall \, \fn{v} \in \P_{\tau_m^{(a_m,b_m)}}([-1,1]),
\end{equation} 
such that $a_m,b_m > -1$ and $\tau_m^{(a_m,b_m)} = 2q_m + \delta$, where $\delta = 1$ for Jacobi-Gauss (JG) quadrature rules, $\delta = 0$ for Jacobi-Gauss-Radau (JGR) quadrature rules, and $\delta = -1$ for Jacobi-Gauss-Lobatto (JGL) quadrature rules.\footnote{Gaussian quadrature is discussed in more detail, for example, in \cite[appendix B]{karniadakis_sherwin_spectral_hp_element}. In the case of $(a_m,b_m) = (0,0)$, we recover the familiar Legendre-Gauss (LG), Legendre-Gauss-Radau (LGR), and Legendre-Gauss-Lobatto (LGL) quadrature rules for integration with respect to the unit weight.} Lagrange bases $\{\ell_m^{(i)}\}_{i \in \{0:q_m\}}$ are then defined on the above quadrature nodes for $m \in \{1:2\}$ as
\begin{equation}\label{eq:lagrange}
\ell_m^{(i)}(\eta_m) \coloneqq \prod_{j \in \{0:q_m\} \setminus \{i\}} \frac{\eta_m - \eta_m^{(j)}}{\eta_m^{(j)}-\eta_m^{(i)}},
\end{equation}
which can be used to construct the interpolant of a given function $\fn{v} : \hat{\Omega} \to \R$ as
\begin{equation}\label{eq:nodal_tensor}
(\opr{I}_q \fn{v})(\vec{\chi}(\vec{\eta})) \coloneqq \sum_{\alpha_1=0}^{q_1}\sum_{\alpha_2=0}^{q_2} \fn{v}(\vec{\chi}(\eta_1^{(\alpha_1)},\eta_2^{(\alpha_2)}))\ell_1^{(\alpha_1)}(\eta_1)\ell_2^{(\alpha_2)}(\eta_2).
\end{equation}
The above interpolant is, in general, a rational function with a singularity at $\vec{\xi} = [-1,1]^\T$ when expressed in reference coordinates under the inverse mapping in \cref{eq:inverse_tri}. However, despite being based on a rational function space (the approximation properties of which are analyzed by Shen \etal \cite{shen_fully_tensorial_triangular_sem_09} and Li and Wang \cite{li_rational_tetrahedra_10}), the interpolation is exact for all polynomials of up to degree $q$, as characterized by the following lemma.

\begin{lemma}\label{lem:accuracy_2d}
The tensor-product interpolation operator defined in \cref{eq:nodal_tensor} is exact for any polynomial $\fn{v} \in \P_q(\hat{\Omega})$, satisfying $(\opr{I}_q \fn{v})(\vec{\xi}) = \fn{v}(\vec{\xi})$ for all $\vec{\xi} \in \hat{\Omega} \setminus \{[-1,1]^\T\}$.
\end{lemma}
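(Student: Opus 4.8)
The plan is to move everything to the collapsed coordinate system, where the statement reduces to exactness of tensor-product Lagrange interpolation on a genuine tensor-product polynomial space. Since $\opr{I}_q$ is built from a rational space when viewed in the reference coordinates $\vec{\xi}$, one cannot argue directly there; instead I would first establish the key fact that for every $\fn{v} \in \P_q(\hat{\Omega})$ the pullback $\fn{v}\circ\vec{\chi}$ is a polynomial of degree at most $q_1$ in $\eta_1$ and at most $q_2$ in $\eta_2$, and then invoke the standard result that the operator on the right-hand side of \cref{eq:nodal_tensor} --- which uses $q_m+1$ distinct nodes in the $\eta_m$ direction --- reproduces every such function exactly.

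For the first and central step, by linearity it suffices to take a monomial $\fn{v}(\vec{\xi}) = \xi_1^{\alpha_1}\xi_2^{\alpha_2}$ with $\alpha_1+\alpha_2\leq q$. Substituting \cref{eq:tri_mapping} gives
\[
\fn{v}(\vec{\chi}(\vec{\eta})) = \big(\tfrac{1}{2}(1+\eta_1)(1-\eta_2)-1\big)^{\alpha_1}\,\eta_2^{\alpha_2},
\]
which after expansion has degree $\alpha_1$ in $\eta_1$ and degree $\alpha_1+\alpha_2$ in $\eta_2$; since $q = \min(q_1,q_2)$ we get $\alpha_1 \leq q_1$ and $\alpha_1+\alpha_2 \leq q_2$, so $\fn{v}\circ\vec{\chi}$ lies in the tensor-product space $\P_{q_1}([-1,1])\otimes\P_{q_2}([-1,1])$. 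The only point requiring care is this degree bookkeeping: both $\xi_1$ and $\xi_2$ contribute factors in $\eta_2$, so the $\eta_2$-degree is controlled by the \emph{total} degree $q$ rather than by $\alpha_2$ alone --- this is exactly why the one-dimensional rule in the $\eta_2$ direction must resolve degree $q$.

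The second step is routine: writing $g \coloneqq \fn{v}\circ\vec{\chi}$, one applies one-dimensional Lagrange interpolation exactness in $\eta_1$ (using the $q_1+1$ distinct quadrature nodes and the basis \cref{eq:lagrange}) to write $g(\eta_1,\eta_2) = \sum_{\alpha_1=0}^{q_1} g(\eta_1^{(\alpha_1)},\eta_2)\,\ell_1^{(\alpha_1)}(\eta_1)$, then applies the analogous identity in $\eta_2$ to each coefficient $g(\eta_1^{(\alpha_1)},\cdot)\in\P_{q_2}([-1,1])$, recovering precisely the right-hand side of \cref{eq:nodal_tensor}. This gives $(\opr{I}_q\fn{v})(\vec{\chi}(\vec{\eta})) = \fn{v}(\vec{\chi}(\vec{\eta}))$ for all $\vec{\eta}\in[-1,1]^2$; translating via the inverse map \cref{eq:inverse_tri}, which is a bijection from $[-1,1]\times[-1,1)$ onto $\hat{\Omega}\setminus\{[-1,1]^\T\}$, yields the claimed identity on $\hat{\Omega}\setminus\{[-1,1]^\T\}$, the excluded vertex being the image of the collapsed edge $\eta_2=1$. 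I do not expect any genuine obstacle here beyond getting the degree count in the first step right, which is the crux of the lemma.
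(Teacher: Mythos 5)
Your proposal is correct and follows essentially the same route as the paper's proof: expand $\fn{v}$ in monomials, substitute the collapsed-coordinate map \cref{eq:tri_mapping}, and observe that the degrees are at most $\alpha_1 \leq q_1$ in $\eta_1$ and $\alpha_1+\alpha_2 \leq q_2$ in $\eta_2$, so that $\fn{v}\circ\vec{\chi}$ lies in the tensor-product span and is reproduced exactly. Your additional spelling-out of the one-dimensional interpolation step and of the bijection between $[-1,1]\times[-1,1)$ and $\hat{\Omega}\setminus\{[-1,1]^\T\}$ is fine but just makes explicit what the paper leaves implicit.
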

\begin{proof}
First, we note that any monomial of the form $\pi^{(\alpha_1,\alpha_2)}(\vec{\xi}) \coloneqq \xi_1^{\alpha_1}\xi_2^{\alpha_2}$ with $\vec{\alpha} \in \set{N}(q)$ can be expressed in terms of the collapsed coordinate system as
\begin{equation}\label{eq:monomial}
\begin{aligned}
\pi^{(\alpha_1,\alpha_2)}(\vec{\chi}(\vec{\eta})) &= \big(\tfrac{1}{2}(1+\eta_1)(1-\eta_2)-1\big)^{\alpha_1} \eta_2^{\alpha_2},\\
\end{aligned}
\end{equation}
which is of maximum degree $\alpha_1$ in $\eta_1$ and of maximum degree $\alpha_1 + \alpha_2$ in $\eta_2$. Since $\alpha_1 \leq q_1$ and $\alpha_1 + \alpha_2 \leq q_2$ for all $\vec{\alpha} \in \set{N}(q)$, it follows from expressing any polynomial $\fn{v} \in \P_q(\hat{\Omega})$ as a linear combination of monomials in the form of \cref{eq:monomial} that $\fn{v} \circ \vec{\chi}$ lies within the span of the tensor-product basis, and is therefore interpolated exactly.
\end{proof} 

\subsection{Volume quadrature}\label{sec:vol_quad_tri}
Let $\sigma : \{0:q_1\} \times \{0:q_2\} \to \{1:\nvolnodes\}$ represent a bijective mapping which defines an ordering of the $\nvolnodes \coloneqq (q_1 + 1)(q_2 + 1)$ volume quadrature nodes. Denoting by $\vec{\alpha} \in \{0:q_1\} \times \{0:q_2\}$ the multi-index associated with a given node, we therefore obtain the multidimensional quadrature nodes $\{\vec{\xi}^{(i)}\}_{i\in \{1:\nvolnodes\}} \subset \hat{\Omega}$ as $\vec{\xi}^{(\sigma(\vec{\alpha}))} \coloneqq \vec{\chi}(\eta_1^{(\alpha_1)},\eta_2^{(\alpha_2)})$, with corresponding weights $\{\omega^{(i)}\}_{i\in \{1:\nvolnodes\}} \subset \R^+$ given by
\begin{equation}\label{eq:legendre_weights_tri}
\omega^{(\sigma(\vec{\alpha}))} \coloneqq \frac{1-\eta_2^{(\alpha_2)}}{2}\omega_1^{(\alpha_1)}\omega_2^{(\alpha_2)}
\end{equation}
for $(a_1,b_1) = (0,0)$ and $(a_2,b_2) = (0,0)$, or, alternatively, by
\begin{equation}\label{eq:jacobi_weights_tri}
\omega^{(\sigma(\vec{\alpha}))} \coloneqq \frac{1}{2}\omega_1^{(\alpha_1)}\omega_2^{(\alpha_2)}
\end{equation}
if a Jacobi-type quadrature rule with $(a_2,b_2) = (1,0)$ is instead used as in \cite[section 2.2.1]{sherwin_karniadakis_triangular_sem_95} to subsume the factor of $1-\eta_2^{(\alpha_2)}$ in \eqref{eq:legendre_weights_tri} arising from the change of variables.
\subsection{Facet quadrature}\label{sec:fac_quad_tri}
To integrate over each facet, we let $q_f \in \N_0$ and introduce an additional set of nodes $\{\eta_f^{(i)}\}_{i \in \{0:q_f\}} \subset [-1,1]$ with corresponding quadrature weights $\{\omega_f^{(i)}\}_{i \in \{0:q_f\}}\subset \R_0^+$, defining a rule of degree $\tau_f^{(0,0)} \in \N_0$, satisfying
\begin{equation}\label{eq:tri_facet_quad}
\sum_{i=0}^{q_f}\fn{v}(\eta_f^{(i)})\,\omega_f^{(i)}  = \int_{-1}^1 \fn{v}(\eta_f) \, \dd \eta_f, \quad \forall \,\fn{V} \in \P_{\tau_f^{(0,0)}}([-1,1]).
\end{equation}
Using the numbering convention in \cref{eq:tri_facets}, we can then define the $\nfacnodes \coloneqq q_f + 1$ facet quadrature nodes $\{\vec{\xi}^{(\zeta,i)}\}_{i \in \{1:\nfacnodes\}} \subset \hat{\Gamma}^{(\zeta)}$ and weights $\{\omega^{(\zeta,i)}\}_{i \in \{1:\nfacnodes\}} \subset \R_0^+$ as
\begin{equation}\label{eq:tri_facet_quadrature_nodes}
\begin{alignedat}{3}
\vec{\xi}^{(1,i)} &\coloneqq \vec{\chi}(\eta_f^{(i-1)},-1), \quad &\vec{\xi}^{(2,i)} &\coloneqq \vec{\chi}(1,\eta_f^{(i-1)}), \quad &\vec{\xi}^{(3,i)} &\coloneqq \vec{\chi}(-1,\eta_f^{(i-1)}),\\
\omega^{(1,i)} &\coloneqq \omega_f^{(i-1)}, \quad &\omega^{(2,i)} &\coloneqq \sqrt{2}\omega_f^{(i-1)}, \quad &\omega^{(3,i)} &\coloneqq \omega_f^{(i-1)},
\end{alignedat}
\end{equation}
where we have assumed that the same rule is used for integration over each of the three edges of the triangle, up to the uniform scaling factor of $\sqrt{2}$ for the hypotenuse.

\subsection{Summation-by-parts operators}\label{sec:sbp_tri} 
The linear operators describing partial differentiation of the interpolant in \cref{eq:nodal_tensor} at each volume quadrature node as well as those describing the evaluation of such an interpolant at each facet quadrature node can be represented in terms of the matrices $\mat{D}^{(m)} \in \R^{\nvolnodes\times\nvolnodes}$ and $\mat{R}^{(\zeta)} \in \R^{\nfacnodes\times\nvolnodes}$ as
\begin{subequations}
\begin{align}
\mat{D}^{(m)}\vc{v} &= \big[(\partial\opr{I}_q\fn{v}/\partial\xi_m)(\vec{\xi}^{(1)}), \ldots,(\partial\opr{I}_q\fn{v}/\partial\xi_m)(\vec{\xi}^{(\nvolnodes)}) \big]^\T, \label{eq:interp_deriv}\\ 
\mat{R}^{(\zeta)}\vc{v} &=  \big[(\opr{I}_q\fn{v})(\vec{\xi}^{(\zeta,1)}), \ldots, (\opr{I}_q\fn{v})(\vec{\xi}^{(\zeta,\nfacnodes)})\big]^\T,\label{eq:interp_extrap}
\end{align}
\end{subequations}
where, using \cref{eq:chain_rule_tri} and \cref{eq:tri_facet_quadrature_nodes}, the entries of such matrices are obtained as
\begin{subequations}\label{eq:d_2d}
\allowdisplaybreaks
\begin{align}
D_{\sigma(\vec{\alpha}) \sigma(\vec{\beta})}^{(1)} &\coloneqq \frac{2}{1-\eta_2^{(\alpha_2)}} \frac{\dd \ell_1^{(\beta_1)}}{\dd \eta_1} (\eta_1^{(\alpha_1)}) \delta_{\alpha_2\beta_2}, \label{eq:d1_2d}\\
D_{\sigma(\vec{\alpha}) \sigma(\vec{\beta})}^{(2)} &\coloneqq \frac{1+\eta_1^{(\alpha_1)}}{1-\eta_2^{(\alpha_2)}} \frac{\dd \ell_1^{(\beta_1)}}{\dd \eta_1} (\eta_1^{(\alpha_1)}) \delta_{\alpha_2\beta_2} + \delta_{\alpha_1\beta_1}\frac{\dd \ell_2^{(\beta_2)}}{\dd \eta_2} (\eta_2^{(\alpha_2)}),\label{eq:d2_2d}
\end{align}
\end{subequations}
and
\begin{equation}\label{eq:r_2d}
\begin{gathered}
R_{i+1, \sigma(\vec{\beta})}^{(1)} \coloneqq \ell_1^{(\beta_1)}(\eta_f^{(i)})\ell_2^{(\beta_2)}(-1),\quad 
R_{i+1, \sigma(\vec{\beta})}^{(2)} \coloneqq \ell_1^{(\beta_1)}(1)\ell_2^{(\beta_2)}(\eta_f^{(i)}),\\
R_{i+1, \sigma(\vec{\beta})}^{(3)} \coloneqq \ell_1^{(\beta_1)}(-1)\ell_2^{(\beta_2)}(\eta_f^{(i)}).
\end{gathered}
\end{equation} 
Defining diagonal matrices $\mat{W} \in \R^{\nvolnodes\times\nvolnodes}$ and $\mat{B}^{(\zeta)} \in \R^{\nfacnodes\times\nfacnodes}$ with entries given in terms of the volume and facet quadrature weights, respectively, as $W_{ij} \coloneqq \omega^{(i)}\delta_{ij}$ and $B_{ij} \coloneqq \omega^{(\zeta,i)}\delta_{ij}$, the following theorem provides sufficient conditions under which the proposed operators satisfy the requirements of \cref{def:sbp}.

\begin{theorem}\label{thm:sbp_tri}
Assuming that the one-dimensional quadrature rules in \cref{eq:quadrature_2d} satisfy \cref{eq:quad_1d} with $\tau_1^{(0,0)} \geq 2q_1$ and $\tau_2^{(0,0)} \geq 2q_2$, and that the facet quadrature rule satisfies \cref{eq:tri_facet_quad} with $\tau_f^{(0,0)} \geq 2\max(q_1,q_2)$, 
the matrices in \cref{eq:d1_2d} and \cref{eq:d2_2d} are diagonal-norm SBP operators of degree $q$, with boundary operators given as in \cref{eq:e_decomp}.
\end{theorem}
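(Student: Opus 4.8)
The plan is to check the three requirements of \cref{def:sbp} with $p = q$. The accuracy condition \cref{eq:accuracy} follows at once: by \cref{eq:interp_deriv} the rows of $\mat{D}^{(m)}$ return $\partial(\opr{I}_q\fn{v})/\partial\xi_m$ at the volume nodes, each of which has $\eta_2^{(\alpha_2)}<1$ and hence avoids the collapsed vertex $[-1,1]^\T$, so \cref{lem:accuracy_2d} gives $\opr{I}_q\fn{v}=\fn{v}$ in a neighbourhood of each volume node whenever $\fn{v}\in\P_q(\hat{\Omega})$. The matrix $\mat{W}$ is diagonal with the strictly positive entries \cref{eq:legendre_weights_tri} (positivity uses $\eta_2^{(\alpha_2)}\in[-1,1)$ together with the positivity of the one-dimensional Gauss-type weights), so $\mat{W}$ is SPD and the operators are of diagonal-norm type. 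The substance of the proof is therefore the SBP decomposition, which I would obtain by setting $\mat{Q}^{(m)}\coloneqq\mat{W}\mat{D}^{(m)}$ and showing that $\mat{Q}^{(m)}+(\mat{Q}^{(m)})^\T$ agrees with the $\mat{E}^{(m)}$ produced by \cref{eq:e_decomp}.

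The key simplification is that left-multiplication by $\mat{W}$ cancels the geometric factor $(1-\eta_2^{(\alpha_2)})/2$ appearing in the leading term of \cref{eq:d1_2d,eq:d2_2d}. Writing $\mat{W}_1,\mat{W}_2$ for the diagonal matrices of one-dimensional weights in the $\eta_1$ and $\eta_2$ directions, one finds that $\mat{W}\mat{D}^{(1)}$ and $\mat{W}\mat{D}^{(2)}$ become (up to the node ordering $\sigma$) sums of Kronecker products of $\mat{W}_1$ or $\mat{W}_2$ with one of three one-dimensional matrices: the ``stiffness-like'' matrix with entries $\omega_1^{(\alpha_1)}(\dd \ell_1^{(\beta_1)}/\dd \eta_1)(\eta_1^{(\alpha_1)})$ and its $(1+\eta_1)$- and $(1-\eta_2)$-weighted counterparts in the respective directions. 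Each such matrix is a one-dimensional quadrature applied to a Lagrange basis function times the derivative of another (times a weight $1$, $1+\eta_1$, or $1-\eta_2$), so I would evaluate its symmetric part by recognizing $M_{\alpha\beta}+M_{\beta\alpha}$ as the quadrature approximation of $\int\varpi\,(\ell_\alpha\ell_\beta)'$ and integrating by parts. The weighted integrands have degree $2q_m$, as do the residual mass integrals $\int\ell_\alpha\ell_\beta$ that appear after integration by parts, and this is precisely where the hypotheses $\tau_1^{(0,0)}\ge 2q_1$ and $\tau_2^{(0,0)}\ge 2q_2$ are needed; carrying this out yields rank-one edge terms built from the trace values $\ell_m^{(\beta_m)}(\pm1)$ plus diagonal $\mat{W}_m$ terms, with the boundary contribution at $\eta_2=1$ vanishing because of the $1-\eta_2$ factor (consistent with the exclusion of the singular vertex). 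Assembling, and observing that the two $\mat{W}_1\otimes\mat{W}_2$ contributions to $\mat{Q}^{(2)}+(\mat{Q}^{(2)})^\T$ cancel, then expresses each $\mat{Q}^{(m)}+(\mat{Q}^{(m)})^\T$ as a sum of Kronecker products of rank-one edge operators with $\mat{W}_1$ or $\mat{W}_2$.

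It then remains to evaluate $\mat{E}^{(m)}$ directly from \cref{eq:e_decomp} and match. Using \cref{eq:r_2d}, each $\mat{R}^{(\zeta)}$ factors as a Kronecker product of a one-dimensional facet-interpolation matrix with a row of trace values $\ell_m^{(\beta_m)}(\pm1)$, and each $\mat{B}^{(\zeta)}$ equals the one-dimensional facet weight matrix up to the factor $\sqrt2$ on the hypotenuse that cancels the $1/\sqrt2$ in $\hat{\vec{n}}^{(2)}$; combining the facet-quadrature exactness $\tau_f^{(0,0)}\ge 2\max(q_1,q_2)$ with $\tau_m^{(0,0)}\ge 2q_m$ shows that the one-dimensional facet Gram matrix reduces to $\mat{W}_1$ or $\mat{W}_2$ according to the facet, so that the $\mat{E}^{(m)}$ coming out of \cref{eq:e_decomp} coincide term by term with the expressions for $\mat{Q}^{(m)}+(\mat{Q}^{(m)})^\T$ found above. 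This establishes both the SBP property and the decomposition \cref{eq:e_decomp}, and the facet accuracy \cref{eq:facet_accuracy} holds with $r=q$ because, by \cref{lem:accuracy_2d}, $\mat{R}^{(\zeta)}$ reproduces nodal traces of any $\fn{v}\in\P_q(\hat{\Omega})$, whose restriction to an edge is univariate of degree at most $q$ and whose square is therefore integrated exactly by a facet rule of degree at least $2q$. I expect the middle step to be the main obstacle: performing the weighted one-dimensional integration by parts carefully, and checking that the spurious diagonal $\mat{W}_m$ terms arising in the two weighted cases cancel exactly in $\mat{Q}^{(2)}+(\mat{Q}^{(2)})^\T$ — a cancellation that, unlike in the non-collapsed tensor-product setting, requires the one-dimensional rules to be exact to degree $2q_m$ rather than $2q_m-1$.
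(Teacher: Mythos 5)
Your proposal is correct and follows essentially the same route as the paper's proof: defining $\mat{Q}^{(m)} \coloneqq \mat{W}\mat{D}^{(m)}$, exploiting the cancellation of the $1-\eta_2^{(\alpha_2)}$ factor from \cref{eq:legendre_weights_tri}, using the stated quadrature exactness to rewrite the entries as exact one-dimensional integrals, applying integration by parts (with the product rule for the $(1+\eta_1)$- and $(1-\eta_2)$-weighted terms, whose mass-matrix contributions cancel exactly as you note), and matching against $\mat{E}^{(m)}$ from \cref{eq:e_decomp} via the facet-quadrature exactness and the $\sqrt{2}$ cancellation on the hypotenuse. Your Kronecker-product framing and the reduction of the facet Gram matrices to the diagonal weight matrices are only presentational variations on the paper's entrywise argument, and your observation that the weighted terms force $\tau_m^{(0,0)} \geq 2q_m$ rather than $2q_m - 1$ matches the hypotheses of the theorem.
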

\begin{proof}
The accuracy conditions in \cref{eq:accuracy} and \cref{eq:facet_accuracy} in \cref{def:sbp} follow from \cref{lem:accuracy_2d} and the polynomial exactness of the facet quadrature rules, while the positive-definiteness of $\mat{W}$ results from the fact that the weights in \cref{eq:legendre_weights_tri} are positive when there is no node at $\eta_2 = 1$, as implied by \cref{eq:quadrature_2d}. It therefore remains to show that the SBP property is satisfied. Beginning with the $\xi_1$ direction, we define the matrix $\smash{\mat{Q}^{(1)} \coloneqq \mat{W}\mat{D}^{(1)}}$ and note that the denominator in \cref{eq:d1_2d} is cancelled by the factor $\smash{1-\eta_2^{(\alpha_2)}}$ appearing in \cref{eq:legendre_weights_tri}. Using the cardinal property $\ell_m^{(i)}(\eta_m^{(j)}) = \delta_{ij}$ of the Lagrange basis and the polynomial exactness of the quadrature, we then obtain
\begin{subequations}
\allowdisplaybreaks
\begin{align}
Q_{\sigma(\vec{\alpha})\sigma(\vec{\beta})}^{(1)} &= \underbrace{\int_{-1}^1 \ell_1^{(\alpha_1)}(\eta_1)\frac{\dd \ell_1^{(\beta_1)}}{\dd \eta_1}(\eta_1)\,\dd\eta_1}_{\tau_1^{(0,0)} \, \geq\,  2q_1 - 1 }\underbrace{\int_{-1}^1 \ell_2^{(\alpha_2)}(\eta_2) \ell_2^{(\beta_2)}(\eta_2)\,\dd\eta_2}_{\tau_2^{(0,0)} \, \geq\, 2q_2},\label{eq:q1_tri}\\
E_{\sigma(\vec{\alpha})\sigma(\vec{\beta})}^{(1)} &=\ell_1^{(\alpha_1)}\ell_1^{(\beta_1)}\big\rvert_{-1}^1\underbrace{\int_{-1}^1 \ell_2^{(\alpha_2)}(\eta_f) \ell_2^{(\beta_2)}(\eta_f)\,\dd\eta_f}_{\tau_f^{(0,0)} \, \geq \, 2q_2},
\end{align}
\end{subequations}
where the latter expression follows from substitution of \cref{eq:r_2d} and \cref{eq:normals_tri} into \cref{eq:e_decomp}, and the SBP property results from a straightforward application of integration by parts to obtain $Q_{\sigma(\vec{\alpha})\sigma(\vec{\beta})}^{(1)} = E_{\sigma(\vec{\alpha})\sigma(\vec{\beta})}^{(1)} - Q_{\sigma(\vec{\beta})\sigma(\vec{\alpha})}^{(1)}$. Similarly, considering the $\xi_2$ direction and noting that a cancellation of $1-\eta_2^{(\alpha_2)}$ also occurs for $\mat{Q}^{(2)} \coloneqq \mat{W}\mat{D}^{(2)}$, we obtain
\begin{subequations}
\allowdisplaybreaks
\begin{align}
Q_{\sigma(\vec{\alpha})\sigma(\vec{\beta})}^{(2)} =& \underbrace{\int_{-1}^1 \frac{1+\eta_1}{2}\ell_1^{(\alpha_1)}(\eta_1)\frac{\dd \ell_1^{(\beta_1)}}{\dd \eta_1}(\eta_1)\,\dd\eta_1}_{\tau_1^{(0,0)} \, \geq\,  2q_1 }\underbrace{\int_{-1}^1 \ell_2^{(\alpha_2)}(\eta_2) \ell_2^{(\beta_2)}(\eta_2)\,\dd\eta_2}_{\tau_2^{(0,0)} \, \geq\, 2q_2}\label{eq:q2_tri}\\
&+ \underbrace{\int_{-1}^1 \ell_1^{(\alpha_1)}(\eta_1)\ell_1^{(\beta_1)}(\eta_1)\,\dd\eta_1}_{\tau_1^{(0,0)} \, \geq\,  2q_1 }\underbrace{\int_{-1}^1 \frac{1-\eta_2}{2}\ell_2^{(\alpha_2)}(\eta_2) \frac{\partial\ell_2^{(\beta_2)}}{\partial \eta_2}(\eta_2)\,\dd\eta_2}_{\tau_2^{(0,0)} \, \geq\, 2q_2},\notag \\
E_{\sigma(\vec{\alpha})\sigma(\vec{\beta})}^{(2)} =& \ \ell_1^{(\alpha_1)}(1)\ell_1^{(\beta_1)}(1)\underbrace{\int_{-1}^1 \ell_2^{(\alpha_2)}(\eta_f) \ell_2^{(\beta_2)}(\eta_f)\,\dd\eta_f}_{\tau_f^{(0,0)} \, \geq \, 2q_2 } \\* &- \ell_2^{(\alpha_2)}(-1)\ell_2^{(\beta_2)}(-1)\underbrace{\int_{-1}^1 \ell_1^{(\alpha_1)}(\eta_f)\ell_1^{(\beta_1)}(\eta_f)\,\dd\eta_f}_{\tau_f^{(0,0)} \, \geq \, 2q_1}. \notag
\end{align}
\end{subequations}
Applying integration by parts and the product rule to \cref{eq:q2_tri}, we obtain $Q_{\sigma(\vec{\alpha})\sigma(\vec{\beta})}^{(2)} = E_{\sigma(\vec{\alpha})\sigma(\vec{\beta})}^{(2)} - Q_{\sigma(\vec{\beta})\sigma(\vec{\alpha})}^{(2)}$, and hence the SBP property is satisfied in the $\xi_2$ direction.
\end{proof}

\begin{remark}
\label{rmk:jacobi}
The discrete derivative operators in \cref{eq:d1_2d,eq:d2_2d} are of the same form as those proposed in \cite[section 2.2.3]{sherwin_karniadakis_triangular_sem_95}. However, while the quadrature rules employed in their work are constructed as in \cref{eq:jacobi_weights_tri} in order to subsume the Jacobian determinant of the coordinate transformation appearing in integrals such as \cref{eq:integral_tri}, such choices do not, in general, result in nodal SBP operators on the reference triangle in the sense of \cref{def:sbp}. This is because the factor of $1-\eta_2^{(\alpha_2)}$ in \cref{eq:legendre_weights_tri}, which is subsumed through the use of a Jacobi weight with $(a_2,b_2) = (1,0)$ in \cref{eq:jacobi_weights_tri}, is precisely what leads to the cancellation of the denominators in \cref{eq:d1_2d,eq:d2_2d} when $\mat{D}^{(1)}$ and $\mat{D}^{(2)}$ are pre-multiplied by $\mat{W}$, enabling the exact evaluation of the integrals in \cref{eq:q1_tri,eq:q2_tri}, respectively. While the nodal operators for $(a_2,b_2) = (1,0)$ remain exact for polynomials of degree $q$ as a consequence of \cref{lem:accuracy_2d}, they do not, in general, satisfy the SBP property. \Cref{thm:sbp_tri} therefore requires the use of quadrature rules based on the unit (i.e.\ Legendre) weight for both $\eta_1$ and $\eta_2$. As an example, we note that SBP operators of any polynomial degree $q$ can be constructed using LG quadrature rules with $q+1$ nodes in the $\eta_1$, $\eta_2$, and $\eta_f$ coordinate directions.
\end{remark}

\section{Tensor-product approximations on the reference tetrahedron}\label{sec:tet}
We now extend the methodology presented in \cref{sec:tri} to the three-dimensional case, wherein approximations are constructed on the reference tetrahedron given by
\begin{equation}\label{eq:reference_tetrahedron}
\hat{\Omega} \coloneqq \big\{\vec{\xi} \in [-1,1]^3 : \xi_1 + \xi_2 + \xi_3 \leq -1  \big\}.
\end{equation}
The facets (i.e.\ faces of the tetrahedron) are numbered as
\begin{equation}\label{eq:tet_facets}
\begin{alignedat}{2}
\hat{\Gamma}^{(1)} &\coloneqq \big\{ \vec{\xi} \in \hat{\Omega} : \xi_2 = -1 \big\}, \quad \hat{\Gamma}^{(2)} &&\coloneqq \big\{ \vec{\xi} \in \hat{\Omega} : \xi_1 + \xi_2 + \xi_3 = -1 \big\}, \\
\hat{\Gamma}^{(3)} &\coloneqq \big\{ \vec{\xi} \in \hat{\Omega} : \xi_1 = -1 \big\}, \quad \hat{\Gamma}^{(4)} &&\coloneqq \big\{ \vec{\xi} \in \hat{\Omega} : \xi_3 = -1 \big\},
\end{alignedat}
\end{equation}
with corresponding outward unit normal vectors 
\begin{equation}\label{eq:normals_tet}
\hat{\vec{n}}^{(1)} = \mqty[0\\ -1\\ 0], \quad \hat{\vec{n}}^{(2)} = \mqty[1/\sqrt{3}\\ 1/\sqrt{3}\\ 1/\sqrt{3}], \quad \hat{\vec{n}}^{(3)} = \mqty[-1\\ 0\\ 0], \quad \hat{\vec{n}}^{(4)} = \mqty[0\\ 0\\-1].
\end{equation}
As described, for example, in \cite[section 3.2]{karniadakis_sherwin_spectral_hp_element}, the collapsed coordinate transformation $\vec{\chi} : [-1,1]^3 \to \hat{\Omega}$ from the cube onto the tetrahedron in \cref{eq:reference_tetrahedron} shown in \cref{fig:mapping} is constructed from three successive applications of \cref{eq:tri_mapping} in order to obtain
\begin{equation}\label{eq:tet_mapping}
\vec{\chi}(\vec{\eta}) \coloneqq \mqty[\tfrac{1}{4}(1+\eta_1)(1-\eta_2)(1-\eta_2) - 1 \\ \tfrac{1}{2}(1+\eta_2)(1-\eta_3) - 1 \\ \eta_3],
\end{equation}
which has an inverse defined for $\vec{\xi} \in \hat{\Omega} \setminus \{[-1,1,-1]^\T, [-1,-1,1]^\T \}$ by
\begin{equation}
\vec{\chi}^{-1}(\vec{\xi}) \coloneqq \mqty[2(1+\xi_1)/(-\xi_2-\xi_3) - 1 \\ 2(1+\xi_2)/(1-\xi_3) - 1 \\ \xi_3].
\end{equation}
Integrals can then be evaluated in collapsed coordinates as
\begin{equation}\label{eq:integral_tet}
\int_{\hat{\Omega}}\fn{v}(\vec{\xi}) \, \dd \vec{\xi} = \int_{-1}^1 \int_{-1}^1\int_{-1}^1 \fn{v}(\vec{\chi}(\vec{\eta})) \frac{(1-\eta_2)(1-\eta_3)^2}{8} \,\dd \eta_1\dd \eta_2 \dd \eta_3,
\end{equation}
whereas partial derivatives can be evaluated using the chain rule as
\begin{subequations}\label{eq:chain_rule_tet}
\allowdisplaybreaks
\begin{align}
\frac{\partial \fn{v}}{\partial \xi_1} (\vec{\chi}(\vec{\eta})) &= \frac{4}{(1-\eta_2)(1-\eta_3)} \frac{\partial}{\partial \eta_1} \fn{v}(\vec{\chi}(\vec{\eta})), \\
\frac{\partial \fn{v}}{\partial \xi_2} (\vec{\chi}(\vec{\eta})) &= \bigg(\frac{2(1+\eta_1)}{(1-\eta_2)(1-\eta_3)} \frac{\partial}{\partial \eta_1} + \frac{2}{1-\eta_3} \frac{\partial}{\partial \eta_2}\bigg) \fn{v}(\vec{\chi}(\vec{\eta})), \\
\frac{\partial \fn{v}}{\partial \xi_3} (\vec{\chi}(\vec{\eta})) &= \bigg(\frac{2(1+\eta_1)}{(1-\eta_2)(1-\eta_3)} \frac{\partial}{\partial \eta_1} +  \frac{1+\eta_2}{1-\eta_3}\frac{\partial}{\partial \eta_2} + \frac{\partial}{\partial \eta_3} \bigg) \fn{v}(\vec{\chi}(\vec{\eta})).
\end{align}
\end{subequations}

\subsection{Nodal sets and interpolants}\label{sec:interp_quad_tet}
Let $q_1,q_2,q_3 \in \N_0$ denote the degrees of the approximations with respect to $\eta_1$, $\eta_2$, and $\eta_3$, respectively, with $q \coloneqq \min(q_1,q_2,q_3)$, and define Gaussian quadrature rules with (distinct) nodes and weights given by
\begin{equation}\label{eq:vol_quad_tet}
\begin{alignedat}{5}
\{\eta_1^{(i)}\}_{i \in \{0:q_1\}} &\subset [-1,1], \quad &&\{\eta_2^{(i)}\}_{i \in \{0:q_2\}} &&\subset [-1,1), \quad && \{\eta_3^{(i)}\}_{i \in \{0:q_3\}} &&\subset [-1,1),\\
\{\omega_1^{(i)}\}_{i\in\{0:q_1\}} &\subset \R^+, \quad &&\{\omega_2^{(i)}\}_{i\in\{0:q_2\}} &&\subset \R^+, \quad && \{\omega_3^{(i)}\}_{i\in\{0:q_3\}} &&\subset \R^+,
\end{alignedat}
\end{equation}
satisfying accuracy conditions analogous to \cref{eq:quad_1d} for $m \in \{1:3\}$. Defining Lagrange bases $\{\ell_m^{(i)}\}_{i \in \{0:q_m\}}$ as in \cref{eq:lagrange}, we obtain the three-dimensional interpolant
\begin{equation}\label{eq:nodal_tensor_tet}
\begin{multlined}
(\opr{I}_q \fn{v})(\vec{\chi}(\vec{\eta})) \coloneqq \sum_{\alpha_1=0}^{q_1}\sum_{\alpha_2=0}^{q_2} \sum_{\alpha_3=0}^{q_3}\ \fn{v}(\vec{\chi}(\eta_1^{(\alpha_1)}, \eta_2^{(\alpha_2)}, \eta_3^{(\alpha_3)})) \\ \times \ell_1^{(\alpha_1)}(\eta_1)\ell_2^{(\alpha_2)}(\eta_2)\ell_3^{(\alpha_3)}(\eta_3),
\end{multlined}
\end{equation}
whose degree $q$ polynomial exactness is characterized with the following lemma.

\begin{lemma}\label{lem:accuracy_3d}
The interpolation operator in \cref{eq:nodal_tensor_tet} is exact for any polynomial $\fn{v} \in \P_q(\hat{\Omega})$, satisfying
$(\opr{I}_q \fn{v})(\vec{\xi}) = \fn{v}(\vec{\xi})$ for all $\vec{\xi} \in  \hat{\Omega} \setminus \{[-1,1,-1]^\T,[-1,-1,1]^\T\}$.
\end{lemma}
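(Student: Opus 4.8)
The plan is to follow the same strategy used to establish \cref{lem:accuracy_2d}, reducing the claim to an analysis of the polynomial degree, in each collapsed coordinate direction, of the transformed monomials spanning $\P_q(\hat{\Omega})$. First I would fix a multi-index $\vec{\alpha} \in \set{N}(q)$ (so that $\alpha_1 + \alpha_2 + \alpha_3 \leq q$) and write the corresponding monomial $\pi^{(\vec{\alpha})}(\vec{\xi}) \coloneqq \xi_1^{\alpha_1}\xi_2^{\alpha_2}\xi_3^{\alpha_3}$ in terms of the collapsed coordinates by substituting \cref{eq:tet_mapping}, obtaining
\begin{equation*}
\pi^{(\vec{\alpha})}(\vec{\chi}(\vec{\eta})) = \big(\tfrac{1}{4}(1+\eta_1)(1-\eta_2)(1-\eta_3) - 1\big)^{\alpha_1}\big(\tfrac{1}{2}(1+\eta_2)(1-\eta_3) - 1\big)^{\alpha_2}\eta_3^{\alpha_3}.
\end{equation*}

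Next I would read off the degrees of this expression in each of $\eta_1$, $\eta_2$, and $\eta_3$. The factor coming from $\xi_1^{\alpha_1}$ contributes degree at most $\alpha_1$ in each of the three variables; the factor from $\xi_2^{\alpha_2}$ contributes degree at most $\alpha_2$ in $\eta_2$ and in $\eta_3$; and $\xi_3^{\alpha_3}$ contributes degree $\alpha_3$ in $\eta_3$. Hence $\pi^{(\vec{\alpha})} \circ \vec{\chi}$ is a polynomial of degree at most $\alpha_1$ in $\eta_1$, at most $\alpha_1 + \alpha_2$ in $\eta_2$, and at most $\alpha_1 + \alpha_2 + \alpha_3$ in $\eta_3$. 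Since $\alpha_1 \leq q \leq q_1$, $\alpha_1 + \alpha_2 \leq q \leq q_2$, and $\alpha_1 + \alpha_2 + \alpha_3 \leq q \leq q_3$ for every $\vec{\alpha} \in \set{N}(q)$, the function $\pi^{(\vec{\alpha})} \circ \vec{\chi}$ lies in the span of the tensor-product Lagrange basis $\{\ell_1^{(\beta_1)}\ell_2^{(\beta_2)}\ell_3^{(\beta_3)}\}$ underlying \cref{eq:nodal_tensor_tet}, and is therefore reproduced exactly by $\opr{I}_q$. By linearity, expanding an arbitrary $\fn{v} \in \P_q(\hat{\Omega})$ as a linear combination of such monomials yields $(\opr{I}_q\fn{v})(\vec{\chi}(\vec{\eta})) = \fn{v}(\vec{\chi}(\vec{\eta}))$ for all $\vec{\eta} \in [-1,1]^3$, and this identity descends to the asserted equality on $\hat{\Omega} \setminus \{[-1,1,-1]^\T,[-1,-1,1]^\T\}$, which is precisely the set on which the inverse transformation (and hence $\opr{I}_q\fn{v}$ viewed as a function of $\vec{\xi}$) is defined.

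The argument is essentially mechanical, and I expect no genuine obstacle: the three-dimensional case is a direct analogue of \cref{lem:accuracy_2d}. The only point requiring care is the degree bookkeeping, namely tracking how the vanishing factors $(1-\eta_2)$ and $(1-\eta_3)$ introduced by the collapsed map accumulate across the three monomial powers so as to produce the bounds $\alpha_1$, $\alpha_1 + \alpha_2$, and $\alpha_1 + \alpha_2 + \alpha_3$ in $\eta_1$, $\eta_2$, and $\eta_3$; once these are established, their domination by $q$, together with $q = \min(q_1,q_2,q_3)$, gives compatibility with the tensor-product polynomial space immediately.
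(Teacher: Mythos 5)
Your proposal is correct and follows essentially the same route as the paper's proof: expand $\fn{v}$ in monomials, substitute the collapsed-coordinate map \cref{eq:tet_mapping} to obtain \cref{eq:monomial_3d}, bound the degrees by $\alpha_1$, $\alpha_1+\alpha_2$, and $\alpha_1+\alpha_2+\alpha_3$ in $\eta_1$, $\eta_2$, $\eta_3$, and conclude via $\set{N}(q)$ and $q = \min(q_1,q_2,q_3)$ that $\fn{v}\circ\vec{\chi}$ lies in the span of the tensor-product Lagrange basis. No gaps.
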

\begin{proof}
The proof is similar to that of \cref{lem:accuracy_2d}, wherein we express the monomial $\pi^{(\alpha_1,\alpha_2, \alpha_3)}(\vec{\xi}) \coloneqq \xi_1^{\alpha_1}\xi_2^{\alpha_2}\xi_3^{\alpha_3}$ with $\vec{\alpha} \in \set{N}(q)$ in terms of the collapsed coordinate system under the mapping in \cref{eq:tet_mapping} to obtain
\begin{equation}\label{eq:monomial_3d}
\begin{multlined}
\pi^{(\alpha_1,\alpha_2,\alpha_3)}(\vec{\chi}(\vec{\eta})) = \big(\tfrac{1}{4}(1+\eta_1)(1-\eta_2)(1-\eta_3)-1\big)^{\alpha_1}\\ \times \big(\tfrac{1}{2}(1+\eta_2)(1-\eta_3)-1\big)^{\alpha_2} \eta_3^{\alpha_3},
\end{multlined}
\end{equation}
which is of maximum degree $\alpha_1$ in $\eta_1$, $\alpha_1 + \alpha_2$ in $\eta_2$, and $\alpha_1+\alpha_2+\alpha_3$ in $\eta_3$. The accuracy of the interpolation then follows from the fact that $\alpha_1 \leq q_1$, $\alpha_1 + \alpha_2 \leq q_2$, and $\alpha_1 + \alpha_2 + \alpha_3 \leq q_3$ for all $\vec{\alpha} \in \set{N}(q)$, placing any function $\fn{v} \circ \vec{\chi}$ which can be expressed as a linear combination of such monomials within the span of the tensor-product Lagrange basis used for the interpolation in \cref{eq:nodal_tensor_tet}.
\end{proof}

\subsection{Volume quadrature}
Let $\sigma : \{0:q_1\} \times \{0:q_2\} \times \{0:q_3\} \to \{1:\nvolnodes\}$ denote a bijective mapping inducing an ordering of the $\nvolnodes \coloneqq (q_1 + 1)(q_2 + 1)(q_3+1)$ volume quadrature nodes, which are given on the tetrahedron by $\vec{\xi}^{(\sigma(\vec{\alpha}))} \coloneqq \vec{\chi}(\eta_1^{(\alpha_1)},\eta_2^{(\alpha_2)},\eta_2^{(\alpha_3)})$. If $(a_1,b_1) = (a_2,b_2) = (a_3,b_3)= (0,0)$, the corresponding quadrature weights for the approximation of \cref{eq:integral_tet} are then given by
\begin{equation}\label{eq:legendre_weights_tet}
\omega^{(\sigma(\vec{\alpha}))} \coloneqq \frac{(1-\eta_2^{(\alpha_2)})(1-\eta_3^{(\alpha_3)})^2}{8}\omega_1^{(\alpha_1)}\omega_2^{(\alpha_2)},
\end{equation}
whereas if $(a_1,b_1) = (a_2,b_2) = (0,0)$ and $(a_3,b_3) = (1,0)$, we obtain 
\begin{equation}\label{eq:jacobi_weights_tet}
\omega^{(\sigma(\vec{\alpha}))} \coloneqq \frac{(1-\eta_2^{(\alpha_2)})(1-\eta_3^{(\alpha_3)})}{8}\omega_1^{(\alpha_1)}\omega_2^{(\alpha_2)}.
\end{equation}
By a similar argument to that in \cref{rmk:jacobi}, it can be shown that, unlike the quadrature rules specified above, the choices of $(a_1,b_1) = (0,0)$, $(a_2,b_2) = (1,0)$, and $(a_3,b_3) = (2,0)$ employed in \cite[section 3.1]{sherwin_karniadakis_tetrahedra_hp_fem_96} do not, in general, result in SBP operators on the volume quadrature nodes, and are thus not considered in this work.

\subsection{Facet quadrature}\label{sec:fac_quad_tet}
As the facets of the tetrahedron are triangular, a collapsed coordinate system can be defined on each one, which we orient so as to align with the volume coordinate system. Letting $q_{f1},q_{f2} \in \N_0$, we can therefore define a tensor-product quadrature rule in terms of the one-dimensional nodes and weights
\begin{equation}\label{eq:facet_quad_tet}
\begin{alignedat}{4}
\{\eta_{f1}^{(i)}\}_{i \in \{0:q_{f1}\}} &\subset [-1,1], \quad &&\{\eta_{f2}^{(i)}\}_{i \in \{0:q_{f2}\}} &&\subset [-1,1],\\
\{\omega_{f1}^{(i)}\}_{i \in \{0:q_{f1}\}} &\subset \R_0^+, \quad &&\{\omega_{f2}^{(i)}\}_{i \in \{0:q_{f2}\}} &&\subset \R_0^+,
\end{alignedat}
\end{equation}
satisfying accuracy conditions as in \cref{eq:quad_1d} for quadrature degrees $\tau_{f1}^{(a_{f1},b_{f1})},\tau_{f2}^{(a_{f2},b_{f2})} \in \N_0$ and Jacobi weights with exponents $a_{f1},b_{f1},a_{f2},b_{f2} > -1$. Using the bijective mapping $\sigma_{f} : \{0:q_{f1}\} \times \{0:q_{f2}\} \to \{1:\nfacnodes\}$, where $\nfacnodes \coloneqq (q_{f1} + 1)(q_{f2}+1)$ denotes the number of quadrature nodes on each facet, we obtain
\begin{equation}\label{eq:tet_facet_quadrature_nodes}
\begin{alignedat}{4}
\vec{\xi}^{(1,\sigma_f(\vec{\alpha}))} &\coloneqq \vec{\chi}(\eta_{f1}^{(\alpha_1)},-1, \eta_{f2}^{(\alpha_2)}), \quad &&\vec{\xi}^{(2,\sigma_f(\vec{\alpha}))} &&\coloneqq \vec{\chi}(1,\eta_{f1}^{(\alpha_1)}, \eta_{f2}^{(\alpha_2)}), \\
\vec{\xi}^{(3,\sigma_f(\vec{\alpha}))} &\coloneqq \vec{\chi}(-1,\eta_{f1}^{(\alpha_1)}, \eta_{f2}^{(\alpha_2)}), \quad &&\vec{\xi}^{(4,\sigma_f(\vec{\alpha}))} &&\coloneqq \vec{\chi}(\eta_{f1}^{(\alpha_1)}, \eta_{f2}^{(\alpha_2)},-1),
\end{alignedat}
\end{equation}
with corresponding quadrature weights given for $(a_{f1},b_{f1}) = (a_{f2},b_{f2})= (0,0)$ as
\begin{equation}
\begin{alignedat}{4}
\omega^{(1,\sigma_f(\vec{\alpha}))} &\coloneqq \frac{1-\eta_{f2}^{(\alpha_2)}}{2}\omega_{f1}^{(\alpha_1)}\omega_{f2}^{(\alpha_2)}, \quad &&\omega^{(2,\sigma_f(\vec{\alpha}))} &&\coloneqq \frac{\sqrt{3}(1-\eta_{f2}^{(\alpha_2)})}{2}\omega_{f1}^{(\alpha_1)}\omega_{f2}^{(\alpha_2)} , \\
\omega^{(3,\sigma_f(\vec{\alpha}))} &\coloneqq \frac{1-\eta_{f2}^{(\alpha_2)}}{2}\omega_{f1}^{(\alpha_1)}\omega_{f2}^{(\alpha_2)}, \quad &&\omega^{(4,\sigma_f(\vec{\alpha}))} &&\coloneqq \frac{1-\eta_{f2}^{(\alpha_2)}}{2}\omega_{f1}^{(\alpha_1)}\omega_{f2}^{(\alpha_2)},
\end{alignedat}
\end{equation}
where we also have the option of using a Jacobi quadrature rule with $(a_{f2},b_{f2}) = (1,0)$ as in \cref{eq:jacobi_weights_tri} to subsume the factors of $1-\eta_{f2}^{(\alpha_2)}$ appearing in the weights defined above. 
\begin{remark}
Due to the fact that the nodes in \cref{eq:tet_facet_quadrature_nodes} are arranged asymmetrically on each facet, special attention must be paid to the orientation of the local coordinate systems within each element in order for the facet quadrature nodes to align in physical space. In this work, such alignment is achieved following the second algorithm suggested in \cite[section 2.3]{sherwin_karniadakis_tetrahedra_hp_fem_96}, which was originally proposed by Warburton \etal \cite{warburton_unstructured_connectivity_95}.
\end{remark}

\subsection{Summation-by-parts operators}\label{sec:sbp_tet}
Using \cref{eq:chain_rule_tet} to differentiate the interpolant in \cref{eq:nodal_tensor_tet} with respect to the reference coordinate system, we obtain the entries of the differentiation matrices $\mat{D}^{(m)} \in \R^{\nvolnodes \times \nvolnodes}$ in \cref{eq:interp_deriv} in the tetrahedral case as
\begin{subequations}\label{eq:d_3d}
\allowdisplaybreaks
\begin{align}
D_{\sigma(\vec{\alpha}) \sigma(\vec{\beta})}^{(1)} \coloneqq& \frac{4}{(1-\eta_2^{(\alpha_2)})(1-\eta_3^{(\alpha_3)})}\frac{\dd \ell_1^{(\beta_1)}}{\dd \eta_1} (\eta_1^{(\alpha_1)}) \delta_{\alpha_2\beta_2}\delta_{\alpha_3\beta_3},\label{eq:d1_3d} \\
D_{\sigma(\vec{\alpha}) \sigma(\vec{\beta})}^{(2)} \coloneqq& \frac{2(1+\eta_1^{(\alpha_1)})}{(1-\eta_2^{(\alpha_2)})(1-\eta_3^{(\alpha_3)})}  \frac{\dd \ell_1^{(\beta_1)}}{\dd \eta_1} (\eta_1^{(\alpha_1)}) \delta_{\alpha_2\beta_2} \delta_{\alpha_3\alpha_3} \\* &+ \frac{2}{1-\eta_3^{(\alpha_3)}}\delta_{\alpha_1\beta_1}\frac{\dd \ell_2^{(\beta_2)}}{\dd \eta_2} (\eta_2^{(\alpha_2)})\delta_{\alpha_3\beta_3} \notag, \\
D_{\sigma(\vec{\alpha}) \sigma(\vec{\beta})}^{(3)} \coloneqq&  \frac{2(1+\eta_1^{(\alpha_1)})}{(1-\eta_2^{(\alpha_2)})(1-\eta_3^{(\alpha_3)})}  \frac{\dd \ell_1^{(\beta_1)}}{\dd \eta_1} (\eta_1^{(\alpha_1)}) \delta_{\alpha_2\beta_2} \delta_{\alpha_3\alpha_3}\\*  &+ \frac{1+\eta_2^{(\alpha_2)}}{1-\eta_3^{(\alpha_3)}}\delta_{\alpha_1\beta_1}\frac{\dd \ell_2^{(\beta_2)}}{\dd \eta_2} (\eta_2^{(\alpha_2)})\delta_{\alpha_3\beta_3} + \delta_{\alpha_1\beta_1} \delta_{\alpha_2\beta_2} \frac{\dd \ell_3^{(\beta_3)}}{\dd \eta_3} (\eta_3^{(\alpha_3)}). \notag
\end{align}
\end{subequations}
The entries of $\mat{R}^{(\zeta)} \in \R^{\nvolnodes\times\nfacnodes}$ in \cref{eq:interp_extrap} are likewise obtained using \cref{eq:facet_quad_tet} as
\begin{subequations}\label{eq:r_3d}
\begin{alignat}{2}
R_{\sigma_f(\vec{\alpha})\sigma(\vec{\beta})}^{(1)} &\coloneqq \ell_1^{(\beta_1)}(\eta_{f1}^{(\alpha_1)}) \ell_2^{(\beta_2)}(-1)\ell_3^{(\beta_3)}(\eta_{f2}^{(\alpha_2)}),  \\
R_{\sigma_f(\vec{\alpha})\sigma(\vec{\beta})}^{(2)}  &\coloneqq \ell_1^{(\beta_1)}(1)\ell_2^{(\beta_2)}(\eta_{f1}^{(\alpha_1)})\ell_3^{(\beta_3)}(\eta_{f2}^{(\alpha_2)}), \\
R_{\sigma_f(\vec{\alpha})\sigma(\vec{\beta})}^{(3)}  &\coloneqq \ell_1^{(\beta_1)}(-1)\ell_2^{(\beta_2)}(\eta_{f1}^{(\alpha_1)})\ell_3^{(\beta_3)}(\eta_{f2}^{(\alpha_2)}), \\
R_{\sigma_f(\vec{\alpha})\sigma(\vec{\beta})}^{(4)}  &\coloneqq \ell_1^{(\beta_1)}(\eta_{f1}^{(\alpha_1)})\ell_2^{(\beta_2)}(\eta_{f2}^{(\alpha_2)}) \ell_3^{(\beta_3)}(-1),
\end{alignat}
\end{subequations}  
resulting in the following three-dimensional analogue of \cref{thm:sbp_tri}.

\begin{theorem}\label{thm:sbp_tet}
Assuming that the volume quadrature rules in \cref{eq:vol_quad_tet} satisfy \eqref{eq:quad_1d} with $\tau_1^{(0,0)} \geq 2q_1$, $\tau_2^{(0,0)} \geq 2q_2 + 1$, and either $\tau_3^{(0,0)} \geq 2q_3 + 1$ or $\tau_3^{(1,0)} \geq 2q_3$, and that the facet quadrature rules satisfy analogous conditions with $\tau_{f1}^{(0,0)}\geq 2\max(q_1,q_2)$ and either $\tau_{f2}^{(0,0)} \geq 2\max(q_2,q_3) + 1$ or $\tau_{f2}^{(1,0)} \geq 2\max(q_2,q_3)$, the matrices in \cref{eq:d_3d} are diagonal-norm SBP operators of degree $q$, with boundary operators given as in \cref{eq:e_decomp}.
\end{theorem}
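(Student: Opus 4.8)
The plan is to mirror the structure of the proof of \cref{thm:sbp_tri}, exploiting the fact that the tetrahedral mapping in \cref{eq:tet_mapping} is a composition of triangular collapses so that each of the three directional derivative operators in \cref{eq:d_3d} retains a ``warped'' tensor-product form. First I would dispatch the accuracy conditions of \cref{def:sbp}: \cref{eq:accuracy} follows immediately from \cref{lem:accuracy_3d} together with the chain rule \cref{eq:chain_rule_tet} (which underlies the definitions \cref{eq:d_3d}), and \cref{eq:facet_accuracy} follows from \cref{lem:accuracy_3d} applied to the facet interpolation together with the exactness of the facet quadrature rules; positive-definiteness of $\mat{W}$ holds because the weights in \cref{eq:legendre_weights_tet} (or \cref{eq:jacobi_weights_tet}) are strictly positive, since \cref{eq:vol_quad_tet} places no node at $\eta_2 = 1$ or $\eta_3 = 1$. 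It then remains to verify the SBP decomposition direction by direction.

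For the $\xi_1$ direction, I would set $\mat{Q}^{(1)} \coloneqq \mat{W}\mat{D}^{(1)}$ and observe that the denominator $(1-\eta_2^{(\alpha_2)})(1-\eta_3^{(\alpha_3)})$ in \cref{eq:d1_3d} is exactly cancelled by the matching factor in \cref{eq:legendre_weights_tet} (the cancellation is likewise complete if \cref{eq:jacobi_weights_tet} is used, since the surviving factor of $1-\eta_3^{(\alpha_3)}$ matches the $\varpi^{(1,0)}$ weight in the $\eta_3$ rule). Using the cardinal property of the Lagrange bases and the stated quadrature exactness, $\mat{Q}^{(1)}$ becomes a triple product of one-dimensional integrals: $\int_{-1}^{1}\ell_1^{(\alpha_1)}(\dd\ell_1^{(\beta_1)}/\dd\eta_1)\,\dd\eta_1$ (needing degree $2q_1-1$, hence $\tau_1^{(0,0)}\geq 2q_1$), $\int_{-1}^{1}\ell_2^{(\alpha_2)}\ell_2^{(\beta_2)}\,\dd\eta_2$ (needing degree $2q_2$, but weighted by $\varpi^{(1,0)}$ if the Radau-type rule is chosen, hence the $2q_2+1$ threshold), and similarly in $\eta_3$. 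Comparing with the corresponding reduction of $\mat{E}^{(1)}$ obtained by substituting \cref{eq:r_3d} and \cref{eq:normals_tet} into \cref{eq:e_decomp}, one-dimensional integration by parts in $\eta_1$ gives $Q^{(1)}_{\sigma(\vec{\alpha})\sigma(\vec{\beta})} = E^{(1)}_{\sigma(\vec{\alpha})\sigma(\vec{\beta})} - Q^{(1)}_{\sigma(\vec{\beta})\sigma(\vec{\alpha})}$, which is the SBP property in that direction.

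The $\xi_2$ and $\xi_3$ directions are handled the same way but require more bookkeeping, since $\mat{D}^{(2)}$ has two terms and $\mat{D}^{(3)}$ has three. After premultiplying by $\mat{W}$, each term again reduces to a product of one-dimensional integrals — e.g.\ for $\mat{Q}^{(2)}$ one gets a term with $\tfrac{1}{2}(1+\eta_1)\ell_1^{(\alpha_1)}(\dd\ell_1^{(\beta_1)}/\dd\eta_1)$ in $\eta_1$ (degree $2q_1$) and a term with $\tfrac{1}{2}(1-\eta_3)\ell_3^{(\alpha_3)}(\dd\ell_3^{(\beta_3)}/\dd\eta_3)$-type factors in $\eta_3$ (degree $2q_3$ against the appropriate weight) — and I would apply the product rule and one-dimensional integration by parts in each collapsed direction to match $\mat{Q}^{(m)} + (\mat{Q}^{(m)})^\T$ against the reduced form of $\mat{E}^{(m)}$ from \cref{eq:e_decomp,eq:r_3d,eq:normals_tet}. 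The main obstacle is purely a matter of careful accounting: one must check that every one-dimensional integrand arising from $\mat{W}\mat{D}^{(m)}$ (after the denominator cancellations) lies within the polynomial-exactness window of the chosen Gauss/Radau rule and its Jacobi weight, and that the boundary terms produced by integration by parts in $\eta_2$ and $\eta_3$ correspond precisely to the face contributions on $\hat{\Gamma}^{(1)}$, $\hat{\Gamma}^{(3)}$, $\hat{\Gamma}^{(4)}$ (from the endpoints $\eta_2=\pm1$, $\eta_3=\pm1$) and $\hat{\Gamma}^{(2)}$ (the collapsed face), with the normals \cref{eq:normals_tet} and the $\sqrt{3}$ scaling on $\hat{\Gamma}^{(2)}$ emerging correctly — exactly as in the two-dimensional case, but now with the extra Jacobi-weight factor $1-\eta_3$ threaded through the $q_2$- and $q_3$-direction rules, which is what forces the slightly elevated quadrature-degree hypotheses $\tau_2^{(0,0)}\geq 2q_2+1$ and $\tau_3^{(0,0)}\geq 2q_3+1$ (or the Jacobi alternative $\tau_3^{(1,0)}\geq 2q_3$) stated in the theorem.
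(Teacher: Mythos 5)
Your overall route is the same as the paper's: dispatch \cref{eq:accuracy} via \cref{lem:accuracy_3d}, get positivity of $\mat{W}$ from the absence of nodes at $\eta_2=1$ and $\eta_3=1$, then for each $m$ reduce $\mat{Q}^{(m)}\coloneqq\mat{W}\mat{D}^{(m)}$ and $\mat{E}^{(m)}$ (via \cref{eq:e_decomp,eq:r_3d,eq:normals_tet}) to products of one-dimensional integrals evaluated exactly by the stated rules, and close with one-dimensional integration by parts and the product rule. That is exactly the paper's proof.

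However, two of your specific accounting claims are wrong, and one of them is precisely what the hypotheses encode. First, $\mat{D}^{(2)}$ contains no $\eta_3$-derivative term (only $\mat{D}^{(3)}$ does), so both terms of $\mat{Q}^{(2)}$ carry mass-type $\eta_3$ factors $\int_{-1}^{1}\tfrac{1-\eta_3}{2}\ell_3^{(\alpha_3)}\ell_3^{(\beta_3)}\,\dd\eta_3$, not the derivative-type factor you describe. Second, you attribute $\tau_2^{(0,0)}\geq 2q_2+1$ to a Radau/Jacobi weight on the $\eta_2$ rule, or to a factor $1-\eta_3$ ``threaded through'' the $q_2$-direction rule; neither is the source. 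The theorem admits no Jacobi alternative in $\eta_2$, and the $1-\eta_3$ factor only ever enters the $\eta_3$ integrals. The elevated $\eta_2$ requirement comes from $\mat{Q}^{(3)}$: the chain-rule factor $(1+\eta_2)/(1-\eta_3)$ in \cref{eq:chain_rule_tet} and the Jacobian factor $(1-\eta_2)(1-\eta_3)^2/8$ in \cref{eq:legendre_weights_tet} leave $\eta_2$-integrands $\int_{-1}^{1}\tfrac{(1-\eta_2)(1+\eta_2)}{4}\ell_2^{(\alpha_2)}\tfrac{\dd\ell_2^{(\beta_2)}}{\dd\eta_2}\,\dd\eta_2$ and $\int_{-1}^{1}\tfrac{1-\eta_2}{2}\ell_2^{(\alpha_2)}\ell_2^{(\beta_2)}\,\dd\eta_2$, which have degree $2q_2+1$ under the unit weight. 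If one followed your stated accounting (degree $2q_2$ sufficing in $\eta_2$ absent a Radau-type choice), the exactness argument for $\mat{Q}^{(3)}$, and hence the integration-by-parts identity in the $\xi_3$ direction, would fail; the analogous facet factor $1-\eta_{f2}$ is likewise what forces $\tau_{f2}^{(0,0)}\geq 2\max(q_2,q_3)+1$ or its $(1,0)$ alternative. With that bookkeeping corrected, your argument coincides with the paper's; note also that for \cref{eq:facet_accuracy} the paper makes explicit the step you gloss over, namely that the facet rules integrate traces of degree-$2q$ polynomials exactly, shown by expanding such traces in the $(\eta_{f1},\eta_{f2})$ coordinates as in the monomial argument of \cref{lem:accuracy_2d}.
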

\begin{proof}
The proof is similar to that of \cref{thm:sbp_tri}, with the accuracy conditions in \cref{eq:accuracy} resulting from \cref{lem:accuracy_3d}. As for the SBP property, we define $\mat{Q}^{(1)} \coloneqq \mat{W}\mat{D}^{(1)}$, which, noting the cancellation of the factor $\smash{(1-\eta_2^{(\alpha_2)})(1-\eta_3^{(\alpha_3)})}$, results in
\begin{equation}\label{eq:q1_tet}
Q_{\sigma(\vec{\alpha})\sigma(\vec{\beta})}^{(1)} =
\underbrace{\int_{-1}^{1} \ell_1^{(\alpha_1)}\frac{\dd\ell_1^{(\beta_1)}}{\dd\eta_1}\, \dd \eta_1}_{\tau_1^{(0,0)} \, \geq \, 2q_1 - 1}\underbrace{\int_{-1}^1 \ell_2^{(\alpha_2)}\ell_2^{(\beta_2)}\,\dd \eta_2}_{\tau_2^{(0,0)} \,\geq\, 2q_2} \underbrace{\int_{-1}^1 \frac{1-\eta_3}{2} \ell_3^{(\alpha_3)} \ell_3^{(\beta_3)}\, \dd \eta_3}_{\tau_3^{(0,0)} \, \geq \, 2q_3+1 \ \text{or} \ \tau_3^{(1,0)} \, \geq \, 2q_3 }, 
\end{equation}
where the dependence of the Lagrange polynomials on the variable of integration has been suppressed within such one-dimensional integral factors for a clearer presentation, and we have used the cardinal property of the Lagrange basis and the polynomial exactness of the quadrature rules to obtain such an expression. Expressing the boundary operator $\mat{E}^{(1)}$ given in \cref{eq:e_decomp} in terms of the interpolation/extrapolation operators in \cref{eq:r_3d} as well as the normals in \cref{eq:normals_tet}, and using the exactness of the facet quadrature rules in \cref{sec:fac_quad_tet}, we likewise obtain
\begin{equation}
E_{\sigma(\vec{\alpha})\sigma(\vec{\beta})}^{(1)} = \ell_1^{(\alpha_1)}\ell_1^{(\beta_1)}\big\lvert_{-1}^1 \underbrace{\int_{-1}^{1}\ell_2^{(\alpha_2)} \ell_2^{(\beta_2)} \, \dd \eta_{f1}}_{\tau_{f1}^{(0,0)} \, \geq \, 2q_2} \underbrace{\int_{-1}^1\frac{1-\eta_{f2}}{2}\ell_3^{(\alpha_3)} \ell_3^{(\beta_3)} \, \dd\eta_{f2}}_{\tau_{f2}^{(0,0)} \, \geq \, 2q_3+1 \ \text{or} \ \tau_{f2}^{(1,0)} \, \geq \, 2q_3 }.
\end{equation}
The SBP property in the $\xi_1$ direction then follows from applying integration by parts to the first factor in \cref{eq:q1_tet}. Similarly, defining $\mat{Q}^{(2)} \coloneqq \mat{W}\mat{D}^{(2)}$ results in
\begin{multline}\label{eq:q2_tet}
Q_{\sigma(\vec{\alpha})\sigma(\vec{\beta})}^{(2)} = \\ \underbrace{\int_{-1}^{1} \frac{1+\eta_1}{2}\ell_1^{(\alpha_1)}\frac{\dd\ell_1^{(\beta_1)}}{\dd\eta_1}\, \dd \eta_1}_{\tau_1^{(0,0)} \, \geq \, 2q_1}\underbrace{\int_{-1}^1 \ell_2^{(\alpha_2)}\ell_2^{(\beta_2)}\,\dd \eta_2}_{\tau_2^{(0,0)}\, \geq \, 2q_2 }\underbrace{\int_{-1}^1 \frac{1-\eta_3}{2} \ell_3^{(\alpha_3)} \ell_3^{(\beta_3)}\, \dd \eta_3}_{\tau_3^{(0,0)} \, \geq \, 2q_3+1 \ \text{or} \ \tau_3^{(1,0)} \, \geq \, 2q_3 }\\
+ \underbrace{\int_{-1}^{1}\ell_1^{(\alpha_1)}\ell_1^{(\beta_1)}\, \dd \eta_1}_{\tau_1^{(0,0)} \, \geq \, 2q_1}\underbrace{\int_{-1}^1 \frac{1-\eta_2}{2}\ell_2^{(\alpha_2)}\frac{\dd\ell_2^{(\beta_2)}}{\dd \eta_2}\,\dd \eta_2}_{\tau_2^{(0,0)} \, \geq \, 2q_2 }\underbrace{\int_{-1}^1 \frac{1-\eta_3}{2} \ell_3^{(\alpha_3)} \ell_3^{(\beta_3)}\, \dd \eta_3}_{\tau_3^{(0,0)} \, \geq \, 2q_3+1 \ \text{or} \ \tau_3^{(1,0)} \, \geq \, 2q_3 }
\end{multline}
and
\begin{multline}\label{eq:e2_tet}
\allowdisplaybreaks
E_{\sigma(\vec{\alpha})\sigma(\vec{\beta})}^{(2)} = \ell_1^{(\alpha_1)}(1)\ell_1^{(\beta_1)}(1) \underbrace{\int_{-1}^{1}\ell_2^{(\alpha_2)} \ell_2^{(\beta_2)} \, \dd \eta_{f1}}_{\tau_{f1}^{(0,0)} \, \geq \, 2q_2}\underbrace{\int_{-1}^1\frac{1-\eta_{f2}}{2}\ell_3^{(\alpha_3)} \ell_3^{(\beta_3)} \, \dd\eta_{f2}}_{\tau_{f2}^{(0,0)} \, \geq \, 2q_3+1 \ \text{or} \ \tau_{f2}^{(1,0)} \, \geq \, 2q_3 } \\- \ell_2^{(\alpha_2)}(-1)\ell_2^{(\beta_2)}(-1)\underbrace{\int_{-1}^{1}\ell_1^{(\alpha_1)}\ell_1^{(\beta_1)}\, \dd \eta_{f1}}_{\tau_{f1}^{0,0} \, \geq \, 2q_1}\underbrace{\int_{-1}^1\frac{1-\eta_{f2}}{2}\ell_3^{(\alpha_3)} \ell_3^{(\beta_3)} \, \dd\eta_{f2}}_{\tau_{f2}^{(0,0)} \, \geq \, 2q_3+1 \ \text{or} \ \tau_{f2}^{(1,0)} \, \geq \, 2q_3 },
\end{multline}
with the SBP property in the $\xi_2$ direction resulting from the application of integration by parts and the product rule to \cref{eq:q2_tet}. Finally, one can similarly show that the SBP property in the $\xi_3$ direction follows from expressing the entries of $\mat{Q}^{(3)} \coloneqq \mat{W}\mat{D}^{(3)}$ as
\begin{multline}
Q_{\sigma(\vec{\alpha})\sigma(\vec{\beta})}^{(3)} =\\ \underbrace{\int_{-1}^{1} \frac{1+\eta_1}{2}\ell_1^{(\alpha_1)}\frac{\dd\ell_1^{(\beta_1)}}{\dd\eta_1}\, \dd \eta_1}_{\tau_1^{(0,0)} \, \geq \, 2q_1}\underbrace{\int_{-1}^1 \ell_2^{(\alpha_2)}\ell_2^{(\beta_2)}\,\dd \eta_2}_{\tau_2^{(0,0)}\, \geq \, 2q_2 }\underbrace{\int_{-1}^1 \frac{1-\eta_3}{2} \ell_3^{(\alpha_3)} \ell_3^{(\beta_3)}\, \dd \eta_3}_{\tau_3^{(0,0)} \, \geq \, 2q_3+1 \ \text{or} \ \tau_3^{(1,0)} \, \geq \, 2q_3 }   \\
+  \underbrace{\int_{-1}^{1}\ell_1^{(\alpha_1)}\ell_1^{(\beta_1)}\, \dd \eta_1}_{\tau_1^{(0,0)} \, \geq \, 2q_1}\underbrace{\int_{-1}^1\frac{(1-\eta_2)(1+\eta_2)}{4}\ell_2^{(\alpha_2)}\frac{\dd\ell_2^{(\beta_2)}}{\dd \eta_2}\,\dd \eta_2}_{\tau_2^{(0,0)} \, \geq \, 2q_2 + 1}  \underbrace{\int_{-1}^1 \frac{1-\eta_3}{2} \ell_3^{(\alpha_3)} \ell_3^{(\beta_3)}\, \dd \eta_3}_{\tau_3^{(0,0)} \, \geq \, 2q_3+1 \ \text{or} \ \tau_3^{(1,0)} \, \geq \, 2q_3 } \\ +  \underbrace{\int_{-1}^{1}\ell_1^{(\alpha_1)}\ell_1^{(\beta_1)}\, \dd \eta_1}_{\tau_1^{(0,0)} \, \geq \, 2q_1} \underbrace{\int_{-1}^1 \frac{1-\eta_2}{2}\ell_2^{(\alpha_2)}\ell_2^{(\beta_2)}\, \dd \eta_2}_{\tau_2^{(0,0)} \, \geq \, 2q_2 + 1}  \underbrace{\int_{-1}^1 \frac{(1-\eta_3)^2}{4} \ell_3^{(\alpha_3)} \frac{\dd\ell_3^{(\beta_3)}}{\dd \eta_3}\, \dd \eta_3}_{\tau_3^{(0,0)} \, \geq \, 2q_3+1 \ \text{or} \ \tau_3^{(1,0)} \, \geq \, 2q_3}
\end{multline}
and those of the corresponding boundary operator as
\begin{multline}\label{eq:e3_tet}
E_{\sigma(\vec{\alpha})\sigma(\vec{\beta})}^{(3)} = \ell_1^{(\alpha_1)}(1)\ell_1^{(\beta_1)}(1) \underbrace{\int_{-1}^{1}\ell_2^{(\alpha_2)} \ell_2^{(\beta_2)} \, \dd \eta_{f1}}_{\tau_{f1}^{(0,0)} \, \geq \, 2q_2}\underbrace{\int_{-1}^{1}\frac{1-\eta_{f2}}{2}\ell_3^{(\alpha_3)} \ell_3^{(\beta_3)} \, \dd \eta_{f2}}_{\tau_{f2}^{(0,0)} \, \geq \, 2q_3+1 \ \text{or} \ \tau_{f2}^{(1,0)} \, \geq \, 2q_3 } \\ - \ell_3^{(\alpha_3)}(-1)\ell_3^{(\beta_3)}(-1)\underbrace{\int_{-1}^{1}\ell_1^{(\alpha_1)} \ell_1^{(\beta_1)} \, \dd \eta_{f1}}_{\tau_{f1}^{(0,0)} \, \geq \, 2q_1}  \underbrace{\int_{-1}^{1}\frac{1-\eta_{f2}}{2}\ell_2^{(\alpha_2)} \ell_2^{(\beta_2)} \, \dd \eta_{f2}}_{\tau_{f2}^{(0,0)} \, \geq \, 2q_2+1 \ \text{or} \ \tau_{f2}^{(1,0)} \, \geq \, 2q_2 }.
\end{multline}
The conditions in \cref{eq:facet_accuracy} then follow from the fact that polynomials of degree $q$ are extrapolated exactly to the boundaries as in \cref{eq:extrap_acc} as a consequence of \cref{lem:accuracy_3d} and the fact that facet quadrature rules satisfying the stated assumptions are exact for the traces of polynomials of degree $2q$, which can be shown by expressing such polynomials in terms of the $(\eta_{f1},\eta_{f2})$ coordinate system in a similar manner to \cref{eq:monomial}.
\end{proof}
\begin{remark}
As an example of a set of quadrature rules satisfying the requirements of \cref{thm:sbp_tet}, we note that LG quadrature rules in the $\eta_1$, $\eta_2$, and $\eta_{f1}$ coordinates and JG quadrature rules with $(a,b) = (1,0)$ in the $\eta_3$ and $\eta_{f2}$ coordinates result in SBP operators of degree $q$ when $q+1$ nodes are used in each direction.
\end{remark}

\section{Discontinuous spectral-element methods on curvilinear meshes}\label{sec:dsem}
The operators introduced in \cref{sec:tri,sec:tet} are applied in this section to the construction of energy-stable DSEM approximations on curvilinear meshes. As a model problem, we consider a scalar conservation law governing the evolution of a quantity $\fn{u}(\vec{x},t) \in \R$ on the domain $\Omega \subset \R^d$ over the time interval $(0,T) \subset \R_0^+$, taking the form
\begin{subequations}
\begin{alignat}{2}
\frac{\partial\fn{u}(\vec{x},t)}{\partial t} + \nabla_{\vec{x}} \cdot \fvec{f}(\fn{u}(\vec{x},t))&= 0, \qquad && \forall \, (\vec{x},t) \in \Omega \times (0,T),\label{eq:pde}\\
\fn{u}(\vec{x},0) &= \fn{u}^0(\vec{x}), \qquad && \forall \, \vec{x} \in \Omega,\label{eq:ic}
\end{alignat}
\end{subequations}
subject to appropriate boundary conditions, where $\fvec{f}(\fn{u}(\vec{x},t)) \in \R^d$ denotes the flux vector and $\fn{u}^0(\vec{x}) \in \R$ denotes the initial data. We focus particularly on the linear advection equation with periodic boundary conditions, corresponding to $\fvec{f}(\fn{u}(\vec{x},t)) = \vec{a}\fn{u}(\vec{x},t)$ for an advection velocity $\vec{a} \in \R^d$, which is assumed here to be constant. 

\subsection{Nodal and modal expansions}\label{sec:nodal_modal_expansions}
Using a mesh constructed as in \cref{sec:meshmap} and the tensor-product nodal approximations introduced in \cref{sec:tri,sec:tet}, we can approximate $\fn{u}(\vec{x},t)$ and  $\fvec{f}(\fn{u}(\vec{x},t))$ in collapsed coordinates as
\begin{subequations}
\allowdisplaybreaks
\begin{align}
\fn{u}(\fvec{x}^{(\kappa)}(\vec{\chi}(\vec{\eta})),t) &\approx  \sum_{\alpha_1=0}^{q_1} \cdots \sum_{\alpha_d=0}^{q_d} u_{\sigma(\vec{\alpha})}^{(h,\kappa)}(t) \ell_1^{(\alpha_1)}(\eta_1) \cdots \ell_d^{(\alpha_d)}(\eta_d),\label{eq:nodal_solution}\\
\fvec{f}(\fn{u}(\fvec{x}^{(\kappa)}(\vec{\chi}(\vec{\eta})),t)) &\approx  \sum_{\alpha_1=0}^{q_1} \cdots \sum_{\alpha_d=0}^{q_d} \fvec{f}(u_{\sigma(\vec{\alpha})}^{(h,\kappa)}(t)) \ell_1^{(\alpha_1)}(\eta_1) \cdots \ell_d^{(\alpha_d)}(\eta_d).\label{eq:nodal_flux}
\end{align}
\end{subequations}
Although it is often advantageous from an efficiency perspective to collocate the solution degrees of freedom with the volume quadrature points by directly evolving the nodal solution vector $\vc{u}^{(h,\kappa)}(t) \in \R^{\nvolnodes}$, there are two disadvantages to such an approach within the present context. First, the number of tensor-product quadrature nodes $\nvolnodes$ can be much larger than the cardinality $N_q^*$ of a total-degree polynomial basis for the space $\P_q(\hat{\Omega})$. Second, the collapsed edge introduces a clustering of resolution near the singularity, and therefore limits the maximum stable time step size for explicit schemes.\footnote{Quoting Dubiner \cite{dubiner_spectral_triangle_91} concerning the time step restriction for tensor-product polynomials in collapsed coordinates, ``[resolution] is a good thing, but this is a case of too much of a good thing.''}\ We therefore propose as an alternative the use of a \emph{modal} approach, wherein a basis $\{\phi^{(i)}\}_{i\in\{1:\npoly\}}$ for $\P_p(\hat{\Omega})$ with $p \leq q$ is used to represent the solution on the reference element in terms of the modal expansion coefficients $\smash{\vc{\tilde{u}}^{(h,\kappa)}(t) \in \R^{\npoly}}$ as
\begin{equation}\label{eq:modal_expansion}
\fn{u}(\fvec{x}^{(\kappa)}(\vec{\xi}),t) \approx \sum_{i=1}^{\npoly} \tilde{u}_i^{(h,\kappa)}(t)\phi^{(i)}(\vec{\xi}),
\end{equation}
which is evaluated at each node to obtain the coefficients $\vc{u}^{(h,\kappa)}(t)$ for the nodal expansion in \eqref{eq:nodal_solution}. This mapping from modal to nodal space can be expressed in terms of the \emph{generalized Vandermonde matrix} $\mat{V} \in \R^{\nvolnodes\times\npoly}$ as
\begin{equation}\label{eq:modal_to_nodal}
\vc{u}^{(h,\kappa)}(t) = \mat{V}\vc{\tilde{u}}^{(h,\kappa)}(t), \quad 
V_{ij} \coloneqq \phi^{(j)}(\vec{\xi}^{(i)}).
\end{equation}
Note that since any polynomial $\fn{v} \in \P_p(\hat{\Omega})$ with $p \leq q$ admits a unique expansion in terms of the tensor-product Lagrange basis, the mapping in \eqref{eq:modal_to_nodal} is injective and hence $\mat{V}$ is of rank $\npoly$ (i.e.\ full column rank), regardless of the chosen basis for $\P_p(\hat{\Omega})$. We must, however, carefully choose such a basis to ensure that the cost of applying $\mat{V}$ is minimized and the tensor-product structure of the discretization is preserved.

\subsection{Proriol-Koornwinder-Dubiner basis functions}\label{sec:modal_basis}
In order to construct polynomial bases on the triangle and tetrahedron for which operations such as \cref{eq:modal_to_nodal} are amenable to sum factorization, we follow \cite[section 3.2]{karniadakis_sherwin_spectral_hp_element} and define 
\begin{equation}\label{eq:principal_functions}
\begin{gathered}
\psi_1^{(\alpha_1)}(\eta_1) \coloneqq \sqrt{2}\fn{P}_{\alpha_1}^{(0,0)}(\eta_1), \quad
\psi_2^{(\alpha_1,\alpha_2)}(\eta_2) \coloneqq (1-\eta_2)^{\alpha_1}\fn{P}_{\alpha_2}^{(2\alpha_1+1,0)}(\eta_2), \\
\psi_3^{(\alpha_1,\alpha_2,\alpha_3)}(\eta_3) \coloneqq 2(1-\eta_3)^{\alpha_1 + \alpha_2}\fn{P}_{\alpha_3}^{(2\alpha_1+2\alpha_2+2,0)}(\eta_3),
\end{gathered}
\end{equation}
in terms of the normalized Jacobi polynomials $\fn{P}_i^{(a,b)} \in \P_i([-1,1])$, which are orthonormal with respect to the weight functions $\varpi^{(a,b)}(\eta) \coloneqq (1-\eta)^{a} (1+\eta)^{b}$ considered in \cref{sec:interp_tri}, and can be constructed through recurrence relations, as shown, for example, by Hesthaven and Warburton \cite[appendix A]{hesthaven08}. The Proriol-Koornwinder-Dubiner (PKD) polynomials \cite{proriol_polynomials_57,koornwinder_orthogonal_polynomials_75,dubiner_spectral_triangle_91} are then given on the reference triangle as
\begin{equation}\label{eq:modal_basis_2d}
\phi^{(\pi(\vec{\alpha}))}(\vec{\chi}(\vec{\eta})) \coloneqq \psi_1^{(\alpha_1)}(\eta_1) \psi_2^{(\alpha_1,\alpha_2)}(\eta_2), 
\end{equation}
and on the reference tetrahedron as
\begin{equation}\label{eq:modal_basis_3d}
\phi^{(\pi(\vec{\alpha}))}(\vec{\chi}(\vec{\eta})) \coloneqq \psi_1^{(\alpha_1)}(\eta_1) \psi_2^{(\alpha_1,\alpha_2)}(\eta_2)\psi_3^{(\alpha_1,\alpha_2,\alpha_3)}(\eta_3), 
\end{equation}
where we order the multi-indices $\vec{\alpha} \in \set{N}(p)$ using the bijection $\pi : \set{N}(p) \to \{1:\npoly\}$. Although the resulting matrix $\mat{V}$ is not a standard tensor-product operator, the ``warped'' tensor-product structure of the PKD basis allows for such a matrix (as well as its transpose) to be applied through sum factorization, as described, for example, in \cite[sections 4.1.6.1 and 4.1.6.2]{karniadakis_sherwin_spectral_hp_element}. Moreover, such bases are orthonormal with respect to the $L^2$ inner product on the reference element, resulting in the reference mass matrix $\mat{M} \coloneqq \mat{V}^\T\mat{W}\mat{V}$ being the identity matrix if the quadrature rule in \cref{eq:sbp_quadrature} is of degree $\tau \geq 2p$. As will be discussed in \cref{sec:implementation}, the proposed algorithms exploit both the tensor-product structure and the orthonormality of the PKD basis. 

\subsection{Discretization using SBP operators}\label{sec:discretization}
Integrating \cref{eq:pde} by parts over a mesh element against a smooth test function $\fn{v}$, we obtain a local weak formulation, which, suppressing dependence on $\vec{x}$ and $t$ for brevity, is given by
\begin{equation}\label{eq:weak}
\int_{\Omega^{(\kappa)}} \fn{v} \frac{\partial\fn{u}}{\partial t} \, \dd\vec{x} = \int_{\Omega^{(\kappa)}}\nabla_{\vec{x}}\fn{v} \cdot \fvec{f}(\fn{u}) \, \dd\vec{x} - \int_{\partial\Omega^{(\kappa)}} \fn{v}\fn{f}^*(\fn{u}^-, \fn{u}^+, \vec{n}) \, \dd s,
\end{equation}
where, as with standard DG methods such as those described in \cite{hesthaven08}, the normal flux component $\fvec{f}(\fn{u}) \cdot \vec{n}$ has been replaced by a  \emph{numerical flux function} (see, for example, Toro \cite{toro09}). For the linear advection equation, the numerical flux takes the form
\begin{equation}\label{eq:num_flux_lf}
\fn{f}^*(\fn{u}^-, \fn{u}^+, \vec{n}) \coloneqq \frac{1}{2}\big(\fvec{f}(\fn{u}^-) + \fvec{f}(\fn{u}^+)\big)\cdot \vec{n} - \frac{\lambda}{2}\big\lvert\vec{a} \cdot \vec{n}\big\rvert (\fn{u}^+ - \fn{u}^-),
\end{equation}
recovering an upwind flux for $\lambda = 1$ and a central flux for $\lambda = 0$, where $\fn{u}^-,\fn{u}^+ \in \R$ are the internal and external solution states, respectively, and $\vec{n} \in \S^{d-1}$ denotes the outward unit normal vector. Discretizing the right-hand side of \cref{eq:weak} using SBP operators on the physical element constructed as in \cref{sec:sbp_phys}, we obtain 
\begin{equation}\label{eq:rhs}
\vc{r}^{(h,\kappa)}(t) \coloneqq \sum_{m=1}^d\big(\mat{Q}^{(\kappa,m)}\big)^\T\vc{f}^{(h,\kappa,m)}(t) - \sum_{\zeta=1}^{\nfac}\big(\mat{R}^{(\zeta)}\big)^\T \mat{B}^{(\zeta)}\mat{J}^{(\kappa,\zeta)}\vc{f}^{(*,\kappa,\zeta)}(t),
\end{equation}
where the physical flux components are evaluated at the volume quadrature nodes as
\begin{equation}\label{eq:phys_flux}
\vc{f}^{(h,\kappa,m)}(t) \coloneqq \big[\fn{f}_m(u_1^{(h,\kappa)}(t)), \ldots, \fn{f}_m(u_{\nvolnodes}^{(h,\kappa)}(t))\big]^\T,
\end{equation}
and the numerical flux is evaluated at the facet quadrature nodes in terms of the interior state $\vc{u}^{(h,\kappa,\zeta)}(t) \coloneqq \mat{R}^{(\zeta)}\vc{u}^{(h,\kappa)}(t)$ and the exterior state $\vc{u}^{(+,\kappa,\zeta)}(t) \in \R^{\nfacnodes}$ as
\begin{equation}\label{eq:num_flux}
\vc{f}^{(*,\kappa,\zeta)}(t) \coloneqq \mqty[\fn{f}^*\big(u_1^{(h,\kappa,\zeta)}(t),\, u_1^{(+,\kappa,\zeta)}(t),\, \vec{n}^{(\kappa,\zeta)}(\fvec{x}^{(\kappa)}(\vec{\xi}^{(1)}))\big) \\ \vdots \\ \fn{f}^*\big(u_{\nfacnodes}^{(h,\kappa,\zeta)}(t),\, u_{\nfacnodes}^{(+,\kappa,\zeta)}(t),\, \vec{n}^{(\kappa,\zeta)}(\fvec{x}^{(\kappa)}(\vec{\xi}^{(\nfacnodes)}))\big)].
\end{equation} 
The semi-discrete nodal and modal formulations are then given, respectively, by
\begin{subequations}
\begin{align}
\mat{W}\mat{J}^{(\kappa)}\dv{\vc{u}^{(h,\kappa)}(t)}{t}&= \vc{r}^{(h,\kappa)}(t), \label{eq:nodal}\\
\mat{M}^{(\kappa)} \dv{\vc{\tilde{u}}^{(h,\kappa)}(t)}{t} &= \mat{V}^\T\vc{r}^{(h,\kappa)}(t) \label{eq:modal},
\end{align}
\end{subequations}
where $\mat{M}^{(\kappa)} \coloneqq \mat{V}^\T\mat{W}\mat{J}^{(\kappa)}\mat{V}$ denotes the physical mass matrix. The initial condition in \cref{eq:ic} is imposed for the nodal approach by restricting $\fn{u}^0(\fvec{x}^{(\kappa)}(\vec{\xi}))$ to the volume quadrature nodes, whereas for the modal approach, we solve the projection problem
\begin{equation}\label{eq:ic_modal}
\mat{M}^{(\kappa)}\vc{u}^{(h,\kappa)}(0) = \mat{V}^\T\mat{W}\mat{J}^{(\kappa)}\big[\fn{u}^0(\fvec{x}^{(\kappa)}(\vec{\xi}^{(1)})), \ldots, \fn{u}^0(\fvec{x}^{(\kappa)}(\vec{\xi}^{(\nvolnodes)}))\big]^\T.
\end{equation}
Due to \cref{thm:sbp_tri,thm:sbp_tet} as well as the use of the split formulation in \cref{eq:q_decomp} encapsulated within $\smash{\mat{Q}^{(\kappa,m)}}$, the discretizations in \cref{eq:nodal} and \cref{eq:modal} directly inherit the accuracy, conservation, free-stream preservation, and energy stability properties afforded through the use of diagonal-norm multidimensional SBP operators (see, for example, Hicken \etal \cite{hicken_mdsbp_16} and Del Rey Fern\'andez \etal \cite{delrey_mdsbp_sat_18,delrey_extension_of_dense_gsbp_tensor_curvilinear_19}). Proofs of such results for the specific discretizations in this work follow directly from a straightforward generalization of the analysis in \cite[section 4.3]{montoya_tensor_product_22} to include tetrahedral elements.

\subsection{Weight-adjusted approximation of the mass matrix inverse}\label{sec:weight_adjusted}
The mass matrix $\mat{M}^{(\kappa)}$ appearing on the left-hand side of \eqref{eq:modal} is dense when the mapping from the reference element to the physical element is not affine, with an inverse that lacks a tensor-product structure. Obtaining the time derivative for such a scheme in the context of explicit temporal integration thus requires either the storage and application of a non-tensorial factorization or inverse, or, otherwise, the solution of a linear system. To obtain a fully explicit formulation for the time derivative in \cref{eq:modal}, we therefore adopt the following \emph{weight-adjusted} approximation from Chan \etal \cite{chan_weight_adjusted_dg_curvilinear_17}:
\begin{equation}\label{eq:weight_adjusted_inverse}
\big(\mat{M}^{(\kappa)}\big)^{-1} \approx \mat{M}^{-1}\mat{V}^\T\mat{W}\big(\mat{J}^{(\kappa)}\big)^{-1}\mat{V}  \mat{M}^{-1} =\vcentcolon \big(\mat{\tilde{M}}^{(\kappa)}\big)^{-1}.
\end{equation}
The above approximation was initially proposed for the purpose of reducing storage, but in the present context has the additional advantage of retaining the tensor-product structure which is otherwise lost by taking the inverse of the mass matrix, allowing for all operations involved in evaluating the time derivative to be performed through sum factorization. Although this work represents the first application, to the authors' knowledge, of the weight-adjusted methodology to tensor-product SEM formulations on simplices, the restoration of a tensor-product structure using a weight-adjusted approximation was previously exploited, for example, in the context of isogeometric analysis by Chan and Evans \cite{chan_evans_iga_18} as well as for Galerkin difference methods by Kozdon \etal \cite{kozdon_complex_geometries_galerkin_difference_19}. Following a similar analysis to that in \cite{chan_weight_adjusted_dg_curvilinear_17}, energy stability can then be established with respect to the discrete norm induced by the modified mass matrix $\mat{\tilde{M}}^{(\kappa)}$ under the same conditions as required for the unmodified scheme. Additionally, our implementation makes use of a further modification proposed by Chan and Wilcox \cite[Lemma 2]{chan_wilcox_entropystable_curvilinear_19}, wherein $\smash{\mat{J}^{(\kappa)}}$ is approximated using a projection of $\fn{J}^{(\kappa)}$ onto $\P_p(\hat{\Omega})$, which, while having a negligible impact on the accuracy of the scheme, ensures discrete conservation when the volume quadrature rule in \cref{eq:sbp_quadrature} is of at least degree $\tau \geq 2p$. 

\section{Efficient implementation}\label{sec:implementation}
In this section, we discuss the efficient implementation of the proposed schemes, particularly in conjunction with explicit time integration. With the exception of the numerical flux evaluation, all operations are local to a given element, and can therefore be executed in parallel in a straightforward manner. Several strategies for computing such local operations are described below.

\subsection{Reference-operator algorithms}\label{sec:reference_operator}
Expressing \cref{eq:rhs} in terms of operators on the reference element and separating the volume and facet contributions, we obtain 
\begin{equation}\label{eq:rhs_new}
\begin{multlined}
\vc{r}^{(h,\kappa)}(t) =  \sum_{l=1}^d \sum_{m=1}^d \bigg( \big(\mat{D}^{(l)}\big)^\T\Big[\tfrac{1}{2}\mat{W}\mat{\Lambda}^{(\kappa,l,m)}\Big]\vc{f}^{(h,\kappa,m)}(t) \\ -\Big[\tfrac{1}{2}\mat{W}\mat{\Lambda}^{(\kappa,l,m)}\Big]\mat{D}^{(l)}\vc{f}^{(h,\kappa,m)}(t)\bigg)  \\  -\sum_{\zeta=1}^{\nfac}\big(\mat{R}^{(\zeta)}\big)^\T \Big[\mat{B}^{(\zeta)}\mat{J}^{(\kappa,\zeta)}\Big]\bigg(\vc{f}^{(*,\kappa,\zeta)}(t) -\sum_{m=1}^d \Big[\tfrac{1}{2}\mat{N}^{(\kappa,\zeta,m)}\Big]\mat{R}^{(\zeta)}\vc{f}^{(h,\kappa,m)}(t) \bigg),
\end{multlined}
\end{equation}
where square brackets are used to denote operators which must be precomputed and stored for each element, which in \cref{eq:rhs_new} are all diagonal. For discretizations on triangles and tetrahedra using non-tensorial operators, the reference operators $\mat{D}^{(l)}$ and $\mat{R}^{(\zeta)}$ are typically stored as dense matrices and applied, for example, using standard BLAS operations. In the context of the proposed schemes, however, we have the additional option of exploiting the tensor-product structure of such operators through sum factorization. To obtain a further optimization in such cases, we redefine the following operators:
\begin{align}
\Big[\tfrac{1}{2}\mat{W}\mat{\Lambda}^{(\kappa,l,m)} \Big]_{\sigma(\vec{\alpha})\sigma(\vec{\beta})} &\gets \frac{1}{2}\sum_{n=1}^d \big[\fn{J}^{(\kappa)}(\vec{\xi}^{(\sigma(\vec{\alpha}))})(\nabla_{\vec{\xi}}\fvec{x}^{(\kappa)}(\vec{\xi}^{(\sigma(\vec{\alpha}))}))^{-1}\big]_{nm} \\ & \quad \times \big[(\nabla_{\vec{\eta}}\vec{\chi}(\eta_1^{(\alpha_1)},\ldots,\eta_d^{(\alpha_d)}) )^{-1}\big]_{ln} \omega^{(\sigma(\vec{\alpha}))}\delta_{\sigma(\vec{\alpha})\sigma(\vec{\beta})}, \notag\\
\Big[\mat{D}^{(l)}\Big]_{\sigma(\vec{\alpha})\sigma(\vec{\beta})} &\gets \frac{\dd\ell_l^{(\beta_l)}}{\dd \eta_l}(\eta_l^{(\alpha_l)})\prod_{m \in \{1:d\} \setminus \{l\}} \delta_{\alpha_m\beta_m} .
\end{align}
These modifications combine the geometric factors arising from the transformations $\vec{\chi} : [-1,1]^d \to \hat{\Omega}$ and $\fvec{x}^{(\kappa)}: \hat{\Omega} \to \Omega^{(\kappa)}$, allowing for the volume contributions in \cref{eq:rhs_new} to be evaluated in collapsed coordinates with an equivalent cost per element to that of a comparable tensor-product discretization on curved quadrilaterals or hexahedra. To evaluate the time derivative for the nodal formulation, we simply pre-multiply the right-hand-side vector by the inverse of the diagonal nodal mass matrix, resulting in
\begin{equation}\label{eq:dudt_nodal}
\dv{\vc{u}^{(h,\kappa)}(t)}{t} = \Big[\big(\mat{W}\mat{J}^{(\kappa)}\big)^{-1}\Big]\vc{r}^{(h,\kappa)}(t).
\end{equation}For the weight-adjusted modal formulation, the time derivative is given explicitly as
\begin{equation}\label{eq:dudt_modal}
\dv{\vc{\tilde{u}}^{(h,\kappa)}(t)}{t} =  \mat{M}^{-1}\mat{V}^\T\Big[\mat{W}\big(\mat{J}^{(\kappa)}\big)^{-1}\Big]\mat{V}\mat{M}^{-1}\mat{V}^\T\vc{r}^{(h,\kappa)}(t),
\end{equation}
where we recall that the application of $\mat{M}^{-1}$ can be avoided by choosing an orthonormal basis and using a volume quadrature rule of degree $2p$ or higher. Since the use of the PKD basis in \cref{sec:modal_basis} allows for $\mat{V}$ and $\mat{V}^\T$ to be applied using sum factorization, and all other operators are either diagonal or possess a standard Kronecker-product structure, the number of operations required for evaluating the time derivative in either \cref{eq:dudt_nodal} or \cref{eq:dudt_modal} scales as $O(p^{d+1})$, assuming in the triangular case that $q_1$, $q_2$, and $q_f$ scale as $O(p)$, and in the the tetrahedral case that $q_1$, $q_2$, $q_3$, $q_{f1}$, and $q_{f2}$ scale as $O(p)$. Asymptotically, this compares favourably to the $O(p^{2d})$ complexity of a standard (i.e.\ non-tensor-product) multidimensional scheme similarly employing $O(p^d)$ volume quadrature nodes and $O(p^{d-1})$ facet quadrature nodes.

\subsection{Physical-operator algorithms}\label{sec:physical_operator}
Whether or not sum factorization is employed, and whether a nodal or modal formulation is chosen, the algorithms described in \cref{sec:reference_operator} all share the feature of avoiding the precomputation and storage of dense operator matrices for each physical element. If such memory considerations are not a constraint, however, one has the option of instead precomputing physical operator matrices, an approach which can be competitive with sum factorization at lower polynomial degrees despite scaling asymptotically as $O(p^{2d})$. The time derivative can then be obtained for the nodal formulation as 
\begin{equation}
\begin{multlined}
\dv{\vc{u}^{(h,\kappa)}(t)}{t} = \sum_{m=1}^d\Big[\big(\mat{W}\mat{J}^{(\kappa)}\big)^{-1}\big(\mat{Q}^{(\kappa,m)}\big)^\T\Big] \vc{f}^{(h,\kappa,m)}(t) \\ - \sum_{\zeta=1}^{\nfac} \Big[ \big(\mat{W}\mat{J}^{(\kappa)}\big)^{-1}\big(\mat{R}^{(\zeta)}\big)^\T\mat{B}^{(\zeta)}\mat{J}^{(\kappa,\zeta)} \Big] \vc{f}^{(*,\kappa,\zeta)}(t),
\end{multlined}
\end{equation}
and for the modal formulation as
\begin{equation}
\begin{multlined}
\dv{\vc{\tilde{u}}^{(h,\kappa)}(t)}{t} = \sum_{m=1}^d\Big[\big(\mat{\tilde{M}}^{(\kappa)}\big)^{-1}\mat{V}^\T\big(\mat{Q}^{(\kappa,m)}\big)^\T\Big] \vc{f}^{(h,\kappa,m)}(t) \\ - \sum_{\zeta=1}^{\nfac}\Big[ \big(\mat{\tilde{M}}^{(\kappa)}\big)^{-1}\mat{V}^\T\big(\mat{R}^{(\zeta)}\big)^\T\mat{B}^{(\zeta)}\mat{J}^{(\kappa,\zeta)} \Big] \vc{f}^{(*,\kappa,\zeta)}(t),
\end{multlined}
\end{equation}
where, although we have used $\mat{\tilde{M}}^{(\kappa)}$ instead of $\mat{M}^{(\kappa)}$ for consistency with \eqref{eq:dudt_modal}, there is no advantage in operation count (aside from that of precomputing the inverse) nor in storage to using such an approximation for physical-operator algorithms.

\section{Numerical experiments}\label{sec:numerical}
In this section, we assess the numerical properties of the proposed split-form DSEMs on triangles and tetrahedra using tensor-product SBP operators within the context of linear advection problems on curvilinear meshes and compare such schemes to those of the same form using multidimensional SBP operators based on symmetric quadrature rules. The scripts required to run all simulations described in this section as well as the Jupyter notebooks used to generate the figures are provided within the reproducibility repository \url{https://github.com/tristanmontoya/ReproduceSBPSimplex}, employing the open-source Julia code \texttt{StableSpectralElements.jl} developed by the first author, which is available at \url{https://github.com/tristanmontoya/StableSpectralElements.jl}.
\subsection{Problem setup and curvilinear mesh generation}
We solve the linear advection equation on the domain $\Omega \coloneqq (0,1)^d$, with an advection velocity of $\vec{a} \coloneqq [1,1]^\T$ in two dimensions and $\vec{a} \coloneqq [1,1,1]^\T$ in three dimensions, where periodic boundary conditions are applied in all directions and the initial condition in \cref{eq:ic} is given by $\fn{u}^0(\vec{x}) \coloneqq \sin(2\pi x_1) \cdots \sin(2\pi x_d)$. The mesh is generated by splitting a Cartesian grid of quadrilateral or hexahedral elements with $M \in \N$ edges in each direction into triangles or tetrahedra of equal size and using an affine transformation to map the interpolation nodes for a degree $p_g \in \N$ Lagrange basis onto each element, which in the present work correspond to those generated through the interpolatory warp-and-blend procedure of Chan and Warburton \cite{chan_warburton_interpolation_nodes_15}. Following Chan \etal \cite[section 5]{chan_delrey_carpenter_gauss_collocation_19}, such a mesh is curved by perturbing the interpolation node positions as
\begin{equation}
\begin{aligned}
\tilde{x}_1 &\gets x_1 + \varepsilon \cos(\pi\big(x_1 - \tfrac{1}{2}\big) )\cos(3\pi\big(x_2 - \tfrac{1}{2} \big) ),\\
\tilde{x}_2 &\gets x_2 + \varepsilon \sin(4\pi\big(\tilde{x}_1 - \tfrac{1}{2} \big) )\cos(\pi\big(x_2 - \tfrac{1}{2}\big) ),
\end{aligned}
\end{equation}
in two dimensions, and as
\begin{equation}
\begin{aligned}
\tilde{x}_2 &\gets x_2 + \varepsilon \cos(3\pi\big(x_1 - \tfrac{1}{2} \big) )\cos(\pi\big(x_2 - \tfrac{1}{2}\big) )\cos(\pi\big(x_3 - \tfrac{1}{2} \big) ),\\
\tilde{x}_1 &\gets x_1 + \varepsilon \cos(\pi\big(x_1 - \tfrac{1}{2} \big) )\sin(4\pi\big(\tilde{x}_2 - \tfrac{1}{2}\big) ) \cos(\pi\big(x_3 - \tfrac{1}{2} \big) ),\\
\tilde{x}_3 &\gets x_3 + \varepsilon \cos(\pi\big(\tilde{x}_1 - \tfrac{1}{2} \big)) \cos(2\pi\big(\tilde{x}_2 - \tfrac{1}{2}\big) ) \cos(\pi\big(x_3 - \tfrac{1}{2} \big)),
\end{aligned}
\end{equation}
in three dimensions, where we take $\varepsilon = 1/16$ in both cases. The new node positions $\tilde{\vec{x}}$ are then used to construct the curvilinear mapping $\fvec{x}^{(\kappa)} \in [\P_{p_g}(\hat{\Omega})]^d$ from reference to physical coordinates through Lagrange interpolation, from which the geometric factors in \cref{eq:geometric_factors} can be obtained analytically. We take $p_g = 3$ in the triangular case and $p_g = 2$ in the tetrahedral case, such that the discrete metric identities in \cref{eq:discrete_metric_identities} are automatically satisfied for SBP operators of any polynomial degree $p \geq 2$. The systems of ordinary differential equations resulting from the proposed spatial discretizations of \cref{eq:pde} are then integrated in time until $T = 1$ using a Julia implementation \cite{rackauckas_julia_diffeq_17} of the five-stage, fourth-order explicit low-storage Runge-Kutta method of Carpenter and Kennedy \cite{carpenter_kennedy_rk_94}, with the time step taken to be sufficiently small for the error due to the temporal discretization to be dominated by that due to the spatial discretization.

\subsection{Multidimensional and tensor-product SBP operators}
To provide a point of comparison for the novel tensor-product discretizations presented in this paper, we construct multidimensional SBP operators using symmetric quadrature rules on the reference element as described by Chan \cite[Lemma 1]{chan_discretely_entropy_conservative_dg_sbp_18}. Specifically, for SBP operators of degree $p$ on the triangle, we use degree $2p$ Xiao-Gimbutas quadrature rules \cite{xiao_gimbutas_quadrature_10} for volume integration and degree $2p+1$ LG quadrature rules for facet integration. On the tetrahedron, we use degree $2p$ Ja\'skowiec-Sukumar quadrature rules \cite{jaskowiec_sukumar_symmetric_cubature_21} for volume integration and degree $2p$ triangular quadrature rules from \cite{xiao_gimbutas_quadrature_10} for facet integration. The resulting operators are henceforth denoted as \emph{multidimensional} to distinguish them from the \emph{tensor-product} operators introduced in this work, and are available for degrees $p \leq 25$ on the triangle and $p \leq 10$ on the tetrahedron. The tensor-product SBP operators which we construct on the triangle employ LG quadrature rules with $p+1$ nodes for integration with respect to $\eta_1$, $\eta_2$, and $\eta_f$, corresponding to $q_1 = q_2 = q_f = p$, whereas those on the tetrahedron employ LG quadrature rules for $\eta_1$ and $\eta_2$ alongside a JG quadrature rule with $(a_3,b_3) = (1,0)$ for $\eta_3$, with $p+1$ nodes in each direction (i.e.\ taking $q_1 = q_2 = q_3 = p$). The facet quadrature on the tetrahedron consists of an LG rule in the $\eta_{f1}$ direction and a JG rule with $(a_{f2},b_{f2}) = (1,0)$ in the $\eta_{f2}$ direction, where we use $p+1$ nodes in each direction, corresponding to $q_{f1} = q_{f2} = p$. As such quadrature rules satisfy the conditions of \cref{thm:sbp_tri,thm:sbp_tet}, valid SBP operators in the sense of \cref{def:sbp} are obtained for all polynomial degrees.

\subsection{Conservation and energy stability}\label{sec:cons_stab}
\begin{figure}[t!]
\centering
\begin{subfigure}{0.495\textwidth}
\centering
\includegraphics[height=36mm]{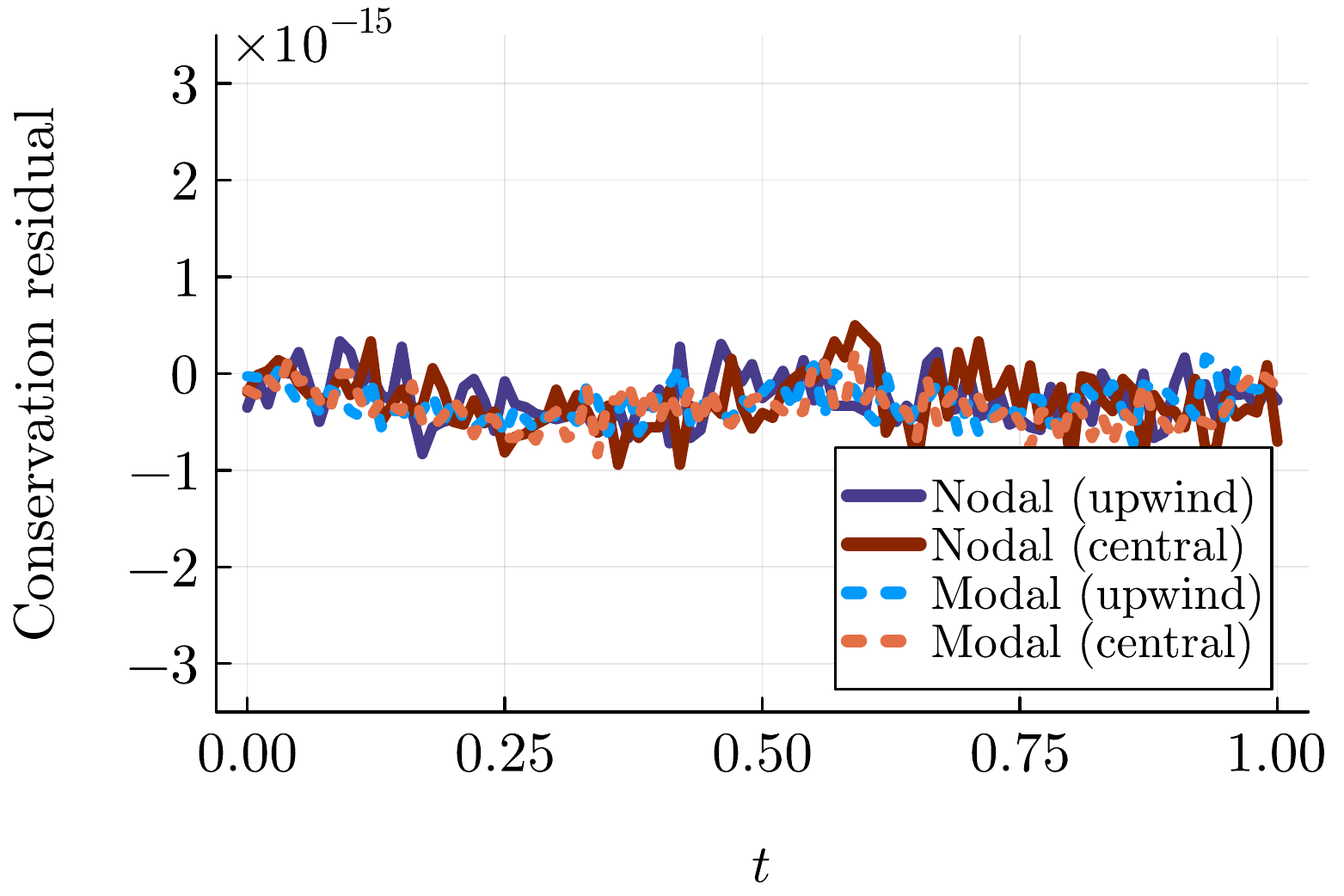}
\end{subfigure}
\begin{subfigure}{0.495\textwidth}
\centering
\includegraphics[height=36mm]{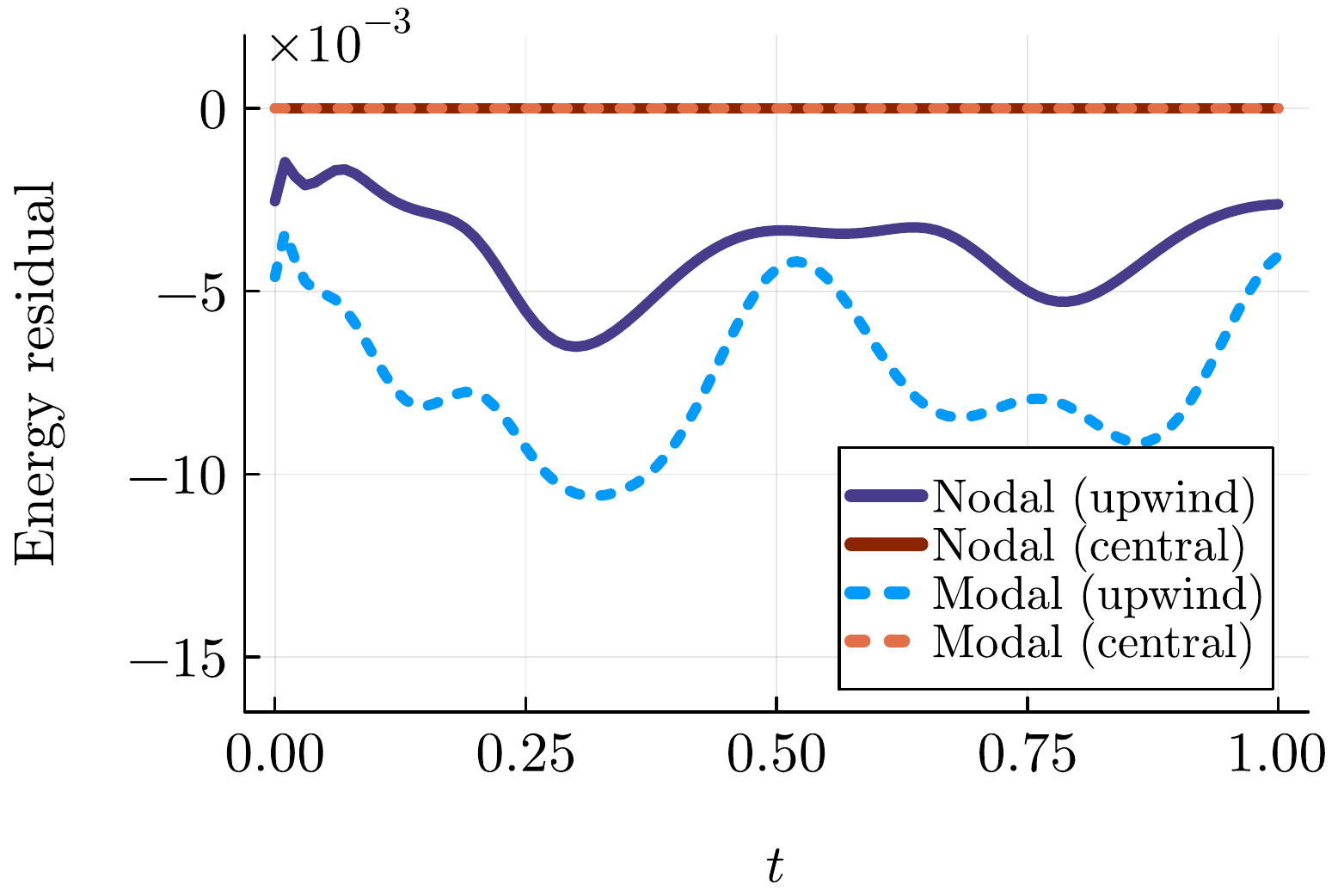}
\end{subfigure}\\
\begin{subfigure}{0.495\textwidth}
\centering
\includegraphics[height=36mm]{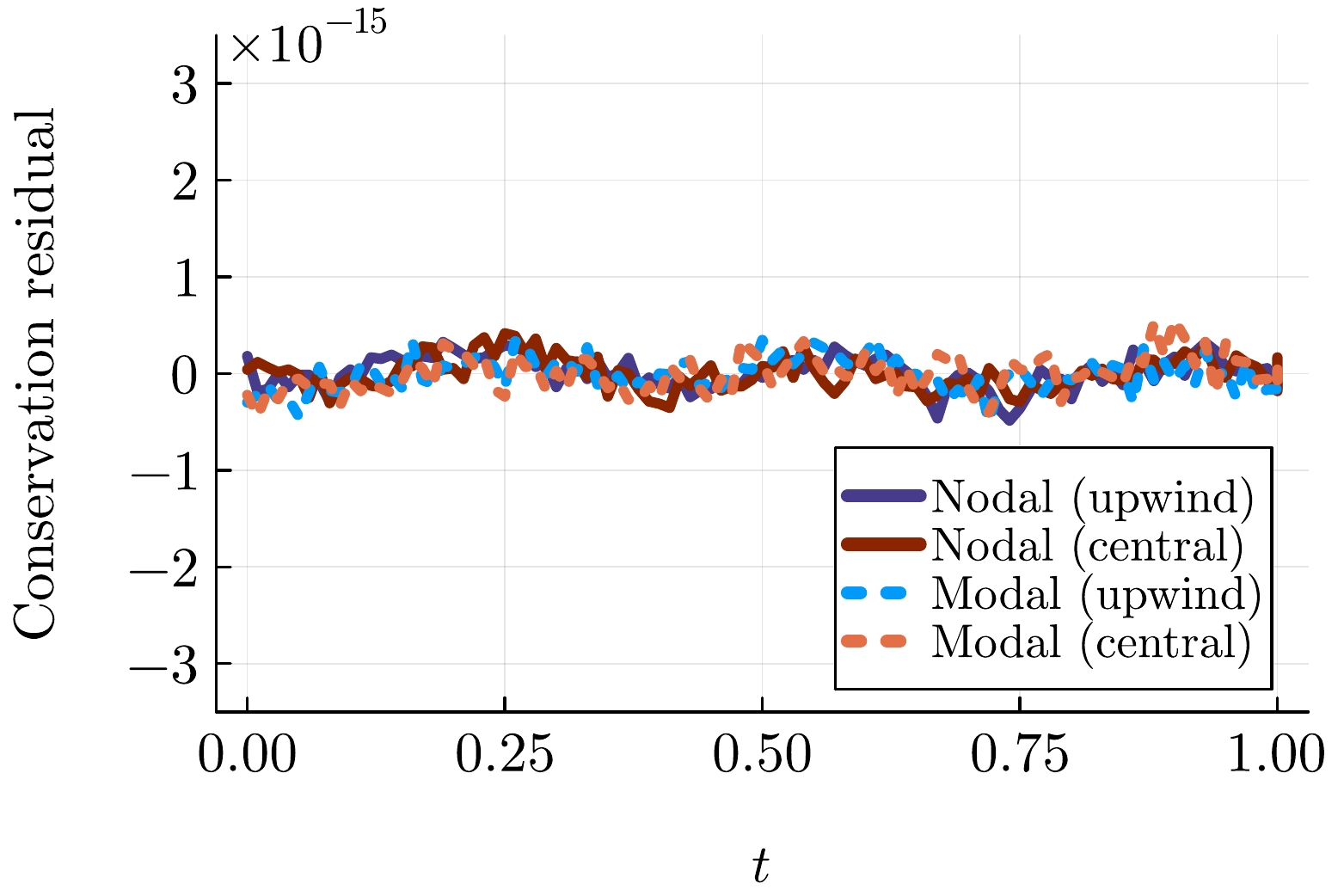}
\end{subfigure}
\begin{subfigure}{0.495\textwidth}
\centering
\includegraphics[height=36mm]{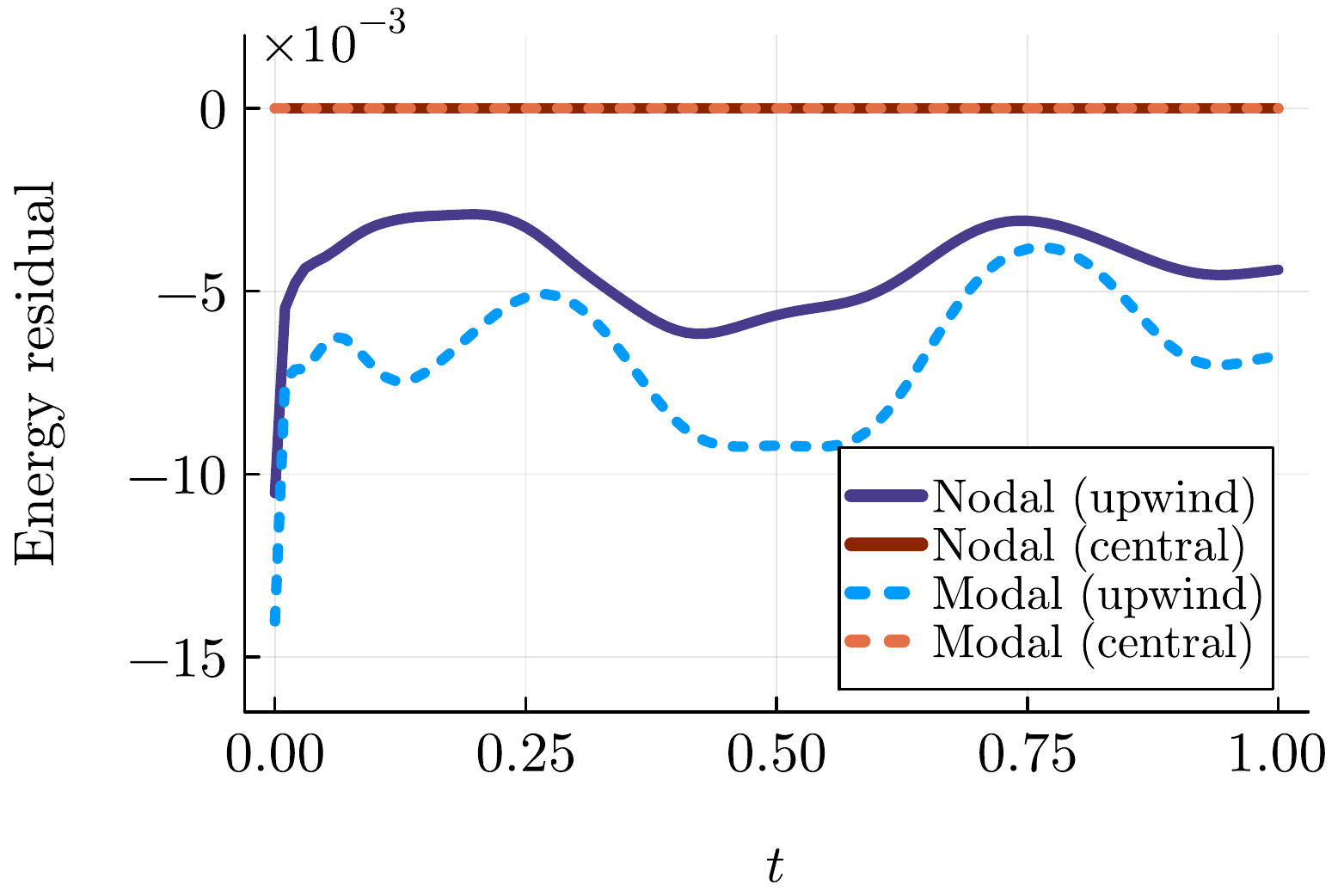}
\end{subfigure}\\
\caption{Time evolution of the conservation and energy residuals for tensor-product discretizations on triangles (top row) and tetrahedra (bottom row) plotted at 101 equispaced snapshots}\label{fig:conservation_stability}
\end{figure}
Considering nodal formulations as well as modal formulations\footnote{The weight-adjusted approximation in \cref{eq:weight_adjusted_inverse} is used for all results involving modal formulations.}\ using the proposed tensor-product SBP operators on triangles and tetrahedra, we plot the time evolution of the conservation residual and the energy residual in \cref{fig:conservation_stability}, where we present results for $p=4$ and $M=2$ as an illustrative example. Such quantities are given in the present context by
\begin{subequations}
\begin{align}
\text{Conservation residual} \, &\coloneqq \, \sum_{\kappa=1}^{\nelem}\big(\vc{1}^{(\nvolnodes)}\big)^\T\mat{W}\mat{J}^{(\kappa)}\dv{\vc{u}^{(h,\kappa)}(t)}{t}, \\
\text{Energy residual} \, &\coloneqq  \, \begin{cases} 
\begin{alignedat}{3}   
\displaystyle &\sum_{\kappa=1}^{\nelem}\big(\vc{u}^{(h,\kappa)}(t)\big)^\T\mat{W}\mat{J}^{(\kappa)}\dv{\vc{u}^{(h,\kappa)}(t)}{t}\quad &&(\text{nodal})\\
\displaystyle &\sum_{\kappa=1}^{\nelem}\big(\vc{\tilde{u}}^{(h,\kappa)}(t)\big)^\T\mat{\tilde{M}}^{(\kappa)}\dv{\vc{\tilde{u}}^{(h,\kappa)}(t)}{t}\quad &&(\text{modal})
\end{alignedat}
\end{cases},\label{eq:energy_residual}
\end{align}
\end{subequations}
corresponding to the time derivative of the discretely integrated numerical solution and that of the discrete solution energy, respectively. As expected for conservative and energy-stable SBP discretizations, the conservation residual remains on the order of machine precision for both the upwind and central variants of the nodal and modal tensor-product schemes on triangles as well as tetrahedra, while the energy residual is on the order of machine precision for a central numerical flux and negative for an upwind numerical flux. Note that we have deliberately used coarse meshes for such tests in order to demonstrate that the conservation and energy stability properties are satisfied discretely at finite resolution, rather than only in the limit of mesh refinement.

\subsection{Spectral radius}
\begin{figure}[t]
\centering
\begin{subfigure}{0.495\textwidth}
\centering
\includegraphics[height=36mm]{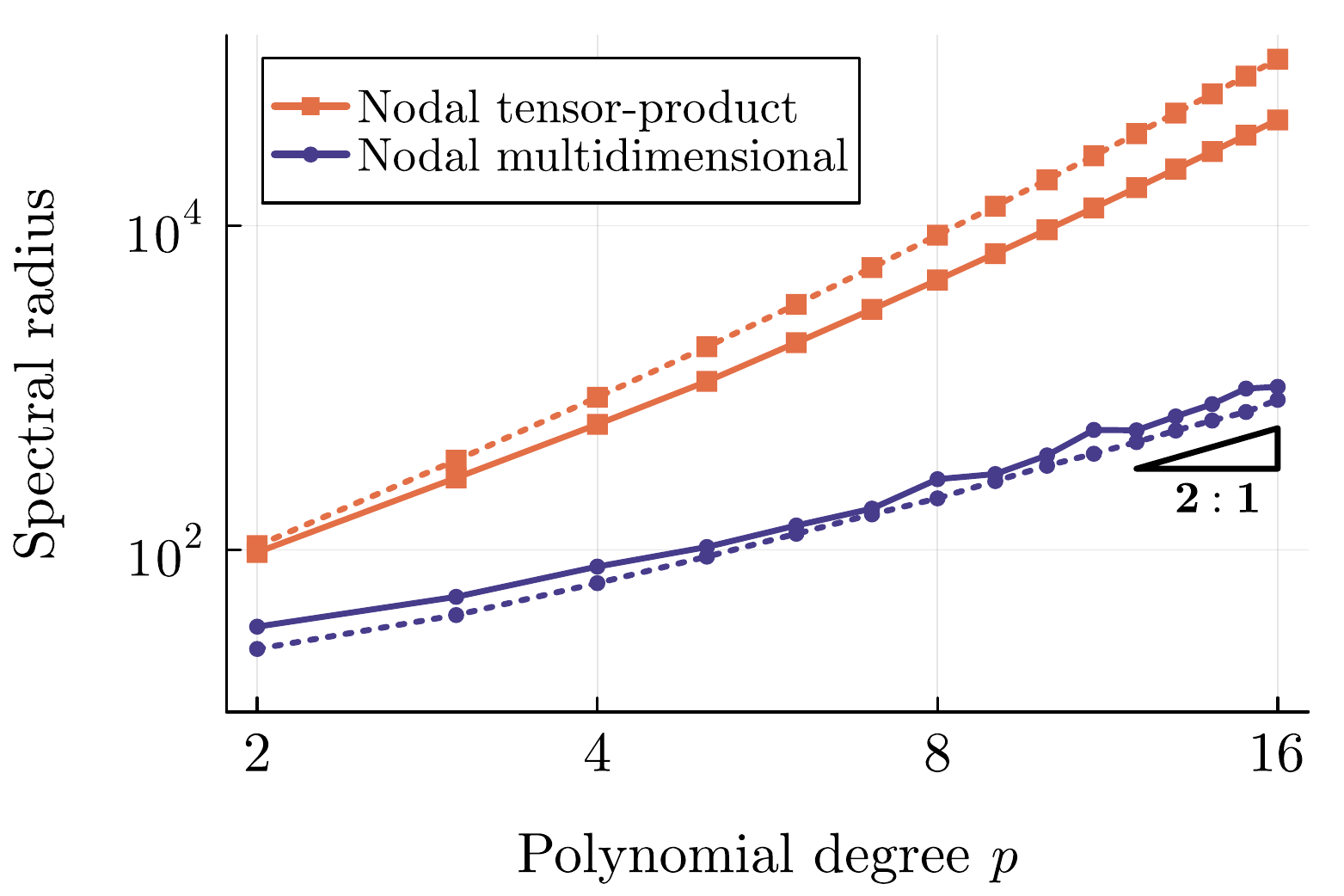} 
\end{subfigure}
\begin{subfigure}{0.495\textwidth}
\centering
\includegraphics[height=36mm]{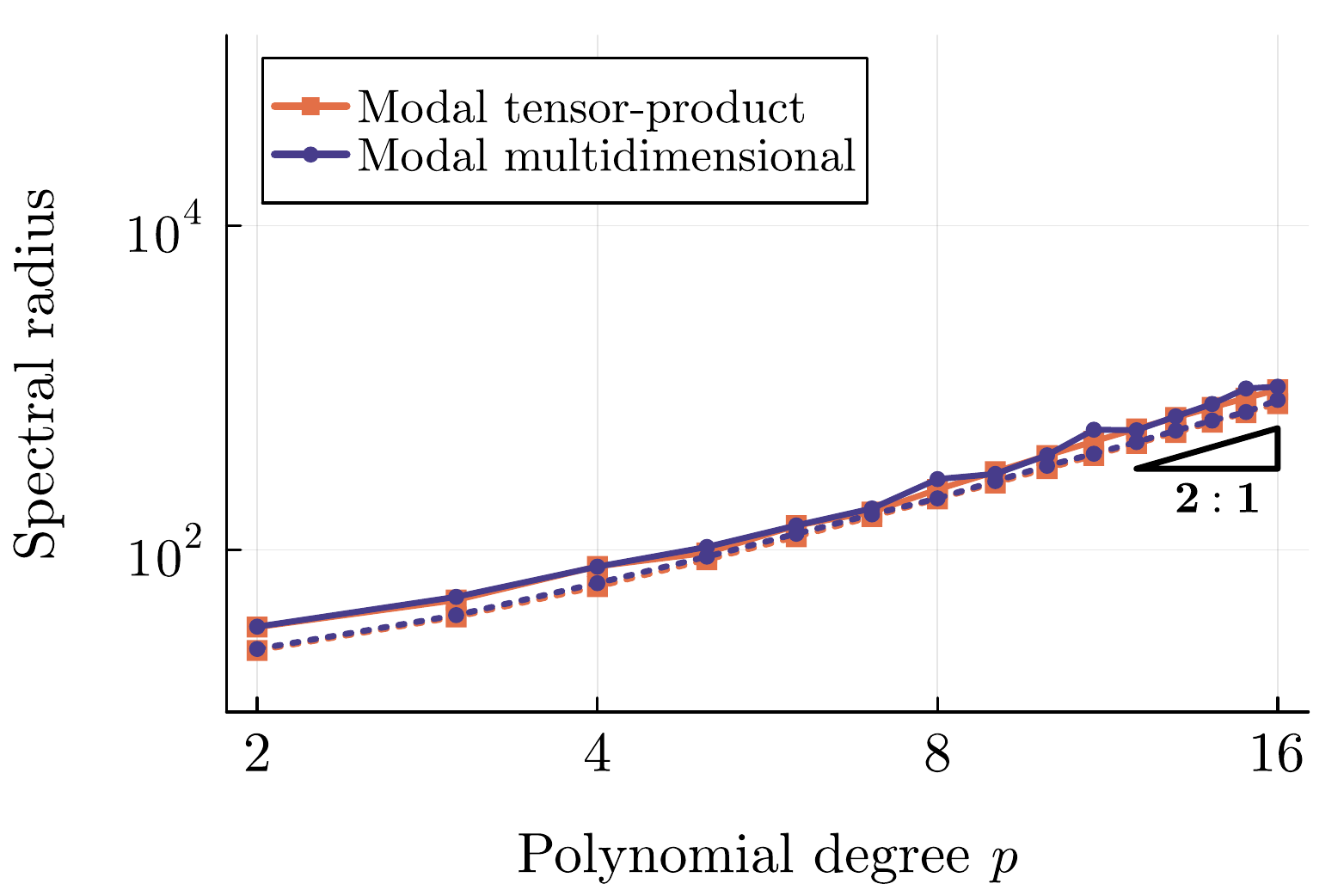}
\end{subfigure}\\
\begin{subfigure}{0.495\textwidth}
\centering
\includegraphics[height=36mm]{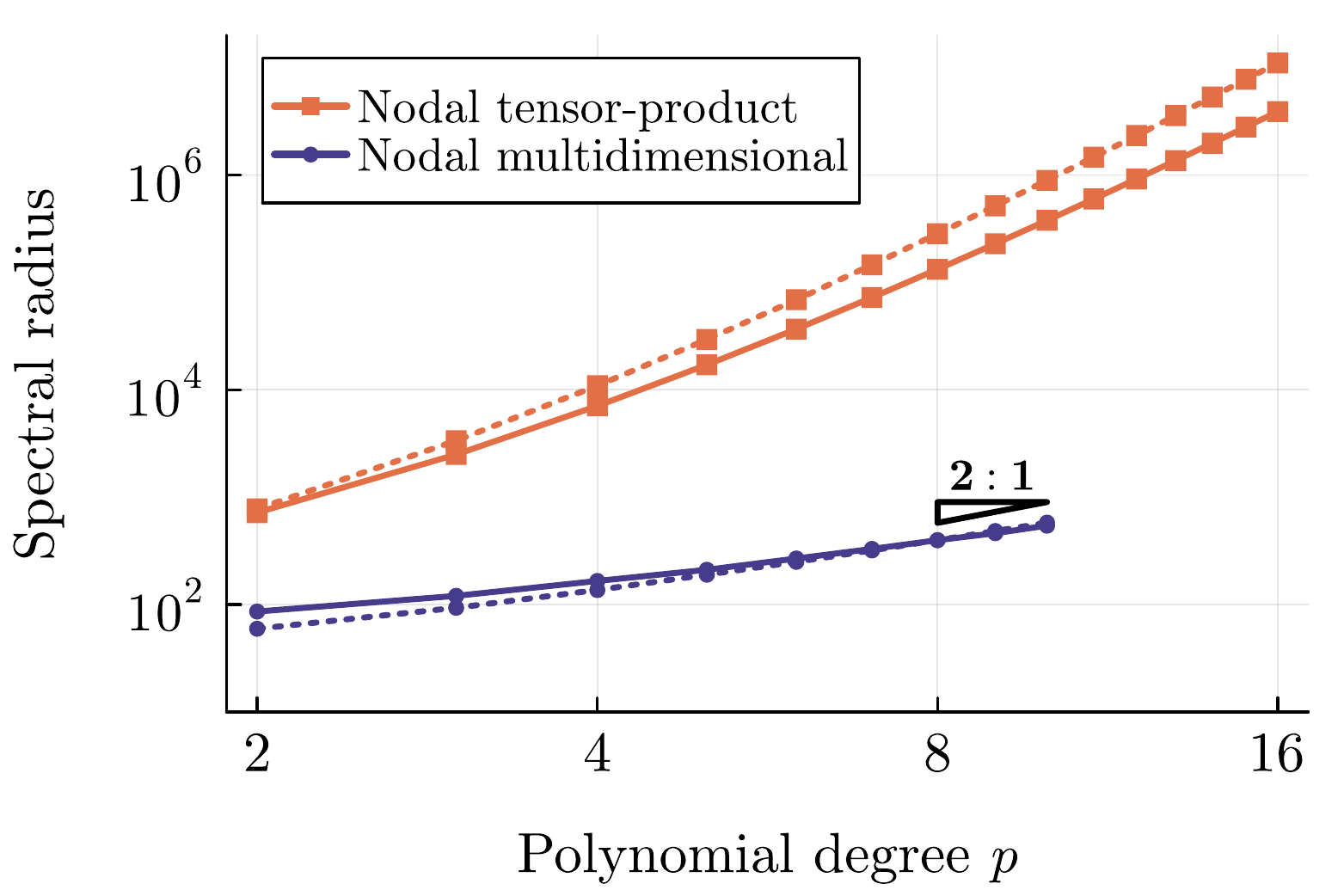}
\end{subfigure}
\begin{subfigure}{0.495\textwidth}
\centering
\includegraphics[height=36mm]{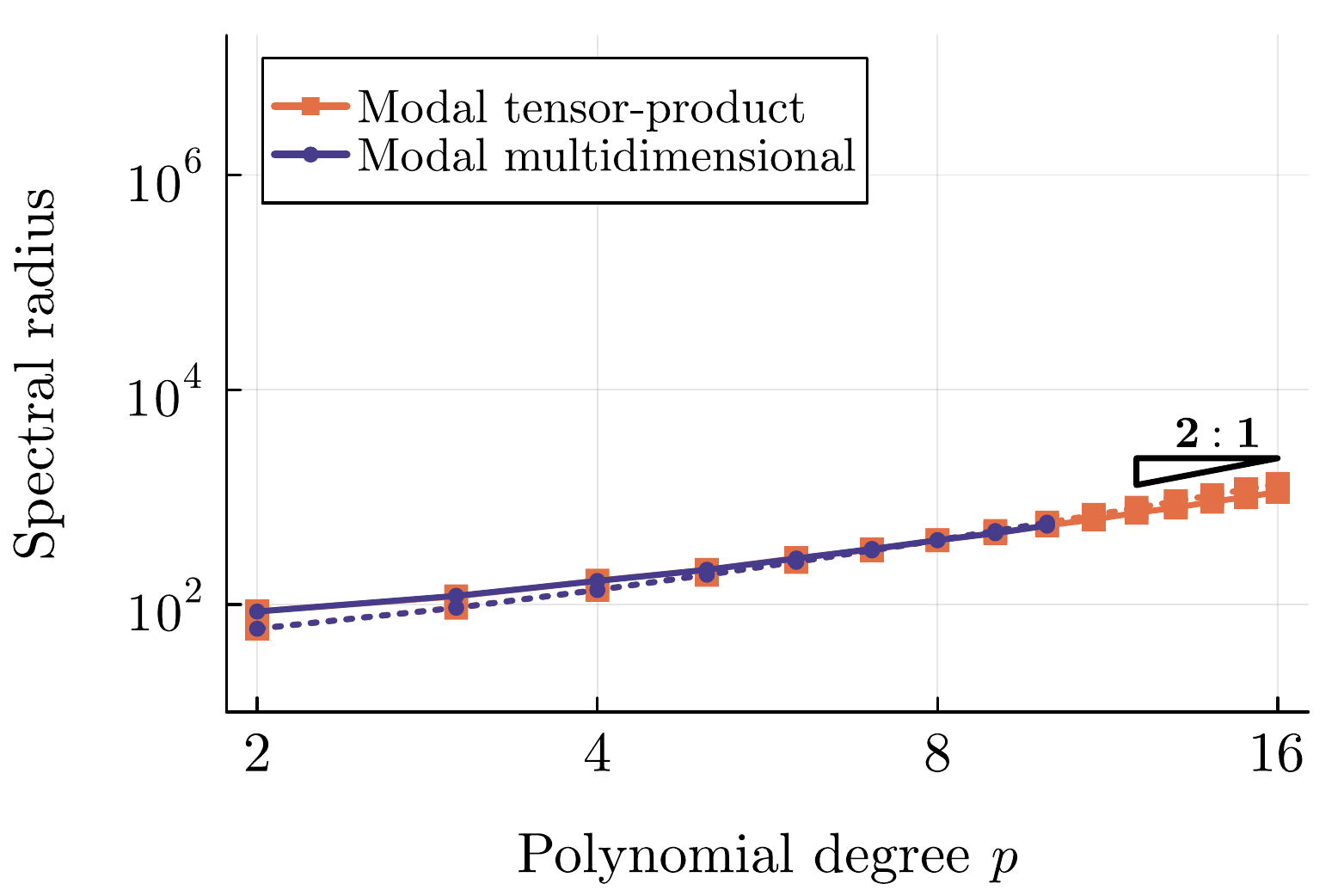}
\end{subfigure}
\caption{Variation in spectral radius of the semi-discrete advection operator with polynomial degree for discretizations on triangles (top row) and tetrahedra (bottom row); solid and dashed lines denote upwind and central numerical fluxes, respectively}\label{fig:specr}
\end{figure}
While the proposed spatial discretizations of the linear advection equation are provably energy stable in a semi-discrete sense, the stability of the fully discrete problem requires the spectrum of the semi-discrete operator to lie within the stability region of the chosen time-marching method. Thus, for explicit time integration, the maximum stable time step size is dictated by the spectral radius of the global semi-discrete operator. \Cref{fig:specr} illustrates the effect of varying the polynomial degree on the spectral radius of the semi-discrete advection operator for nodal and modal formulations using multidimensional as well as tensor-product operators on triangles and tetrahedra, keeping the mesh fixed at $M = 2$. Considering the nodal and modal multidimensional schemes, the spectral radius grows roughly quadratically with the polynomial degree in the case of a central flux, with slightly slower growth observed for an upwind flux. The spectral radii as well as their growth rates with respect to the polynomial degree are much larger for the nodal tensor-product schemes than for all other methods, which, as predicted in \cref{sec:nodal_modal_expansions}, results in a severe restriction on the time step when such schemes are used with explicit temporal integration. This is remedied through the use of the modal formulation, for which the spectral radii are very similar to those of the multidimensional schemes. Such behaviour is consistent with the literature (see, for example, \cite[section 6.3]{karniadakis_sherwin_spectral_hp_element}) and favours the use of the modal formulation at higher polynomial degrees, at least for explicit schemes.

\subsection{Accuracy}\label{sec:accuracy}
Evaluating the discrete $L^2$ error for each scheme using its associated volume quadrature rule and plotting with respect to the nominal element size $h = 1/M$, we see from \cref{fig:accuracy} that for $p = 4$, all methods considered converge as $O(h^{p+1})$ in the case of an upwind numerical flux, with similar accuracy levels on a given mesh observed for all schemes. While the specific formulations considered in this study are comparable to one another in accuracy for a given mesh, we note that the error magnitudes could be reduced while maintaining the same order of accuracy through the use of over-integration (i.e.\ increasing the number of volume quadrature nodes) for accuracy rather than stability purposes, a possibility not explored in this paper. Performing $p$-refinement with $M=2$ using a nodal formulation with multidimensional operators and a modal formulation with tensor-product operators, we see that on triangles, the modal tensor-product formulation is slightly more accurate than the nodal multidimensional formulation for high polynomial degrees, while on tetrahedra, the schemes are of comparable accuracy for degrees in which both types of operators are available.\footnote{Symmetric quadrature rules suitable for the construction of multidimensional diagonal-norm SBP operators of degree $p > 10$ on the tetrahedron are not available in the literature, to the best of the authors' knowledge. Moreover, the analysis in \cref{sec:cost} suggests that schemes employing such operators, if they were to be constructed, would be less efficient than those which we propose.}\ Such results indicate that for this model problem, the proposed tensor-product operators on triangles and tetrahedra are at least as accurate as their multidimensional counterparts for a given mesh and polynomial degree.
\begin{figure}[t!]
\centering
\begin{subfigure}{0.325\textwidth}
\includegraphics[height=36mm]{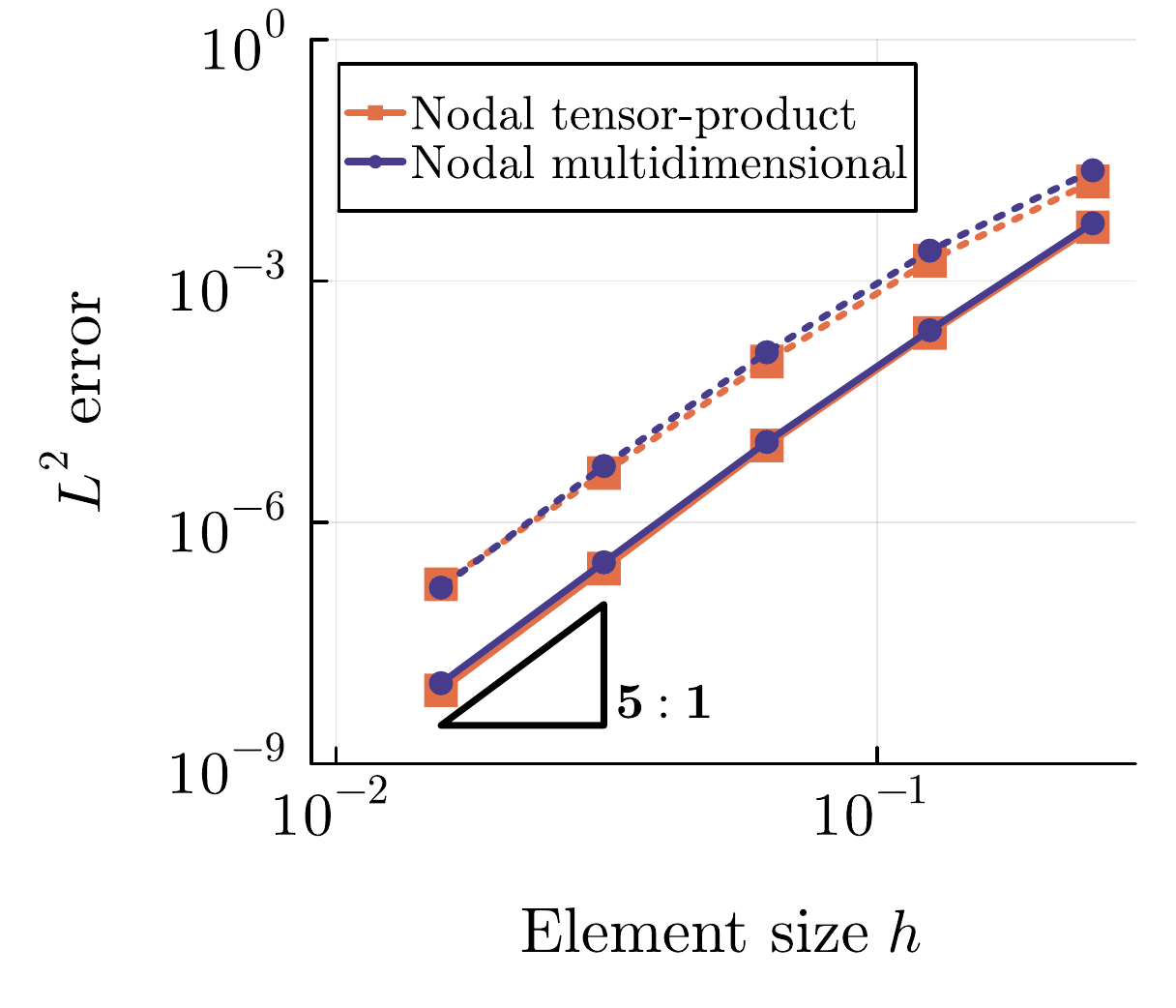}
\end{subfigure}~
\begin{subfigure}{0.325\textwidth}
\includegraphics[height=36mm]{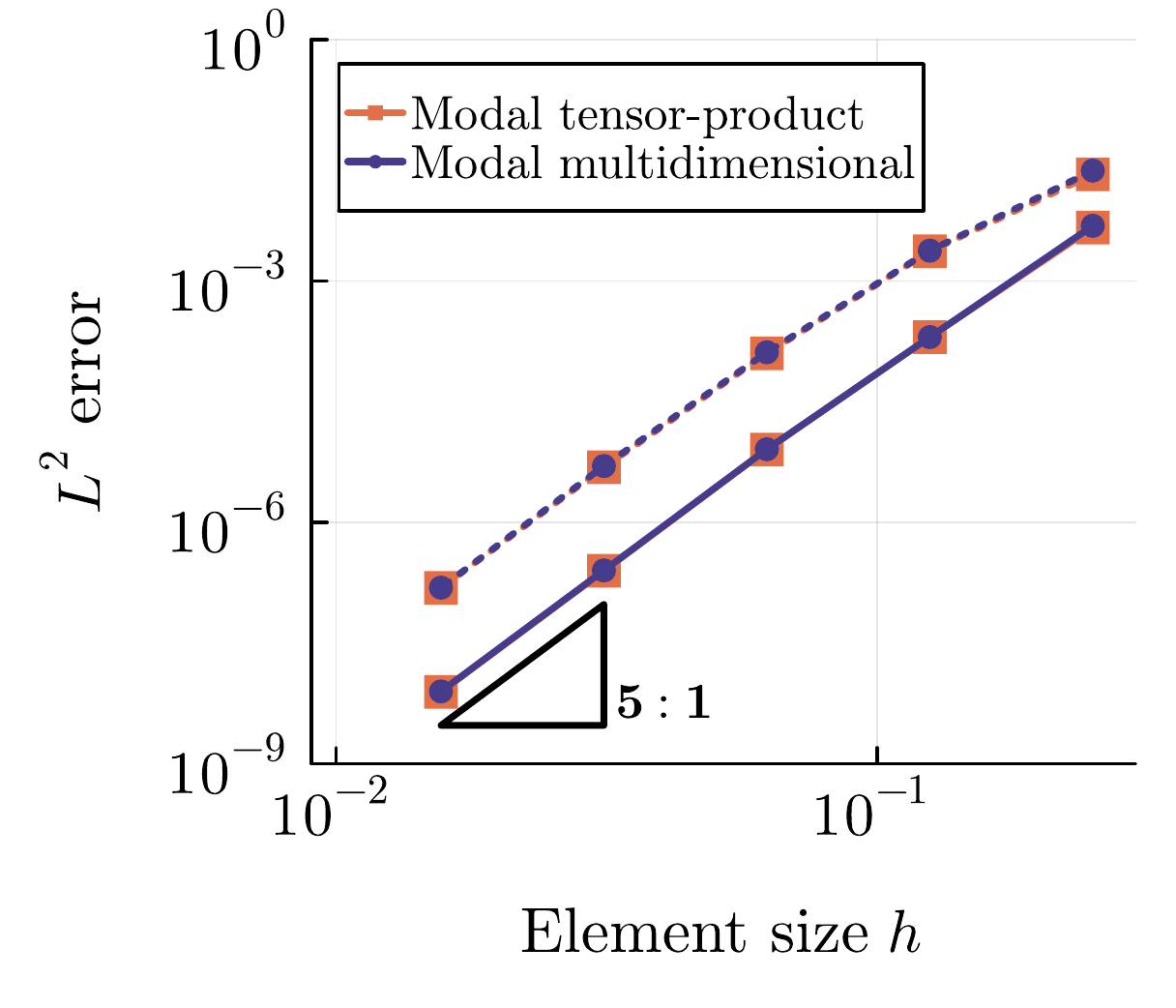}
\end{subfigure}~
\begin{subfigure}{0.325\textwidth}
\includegraphics[height=36mm]{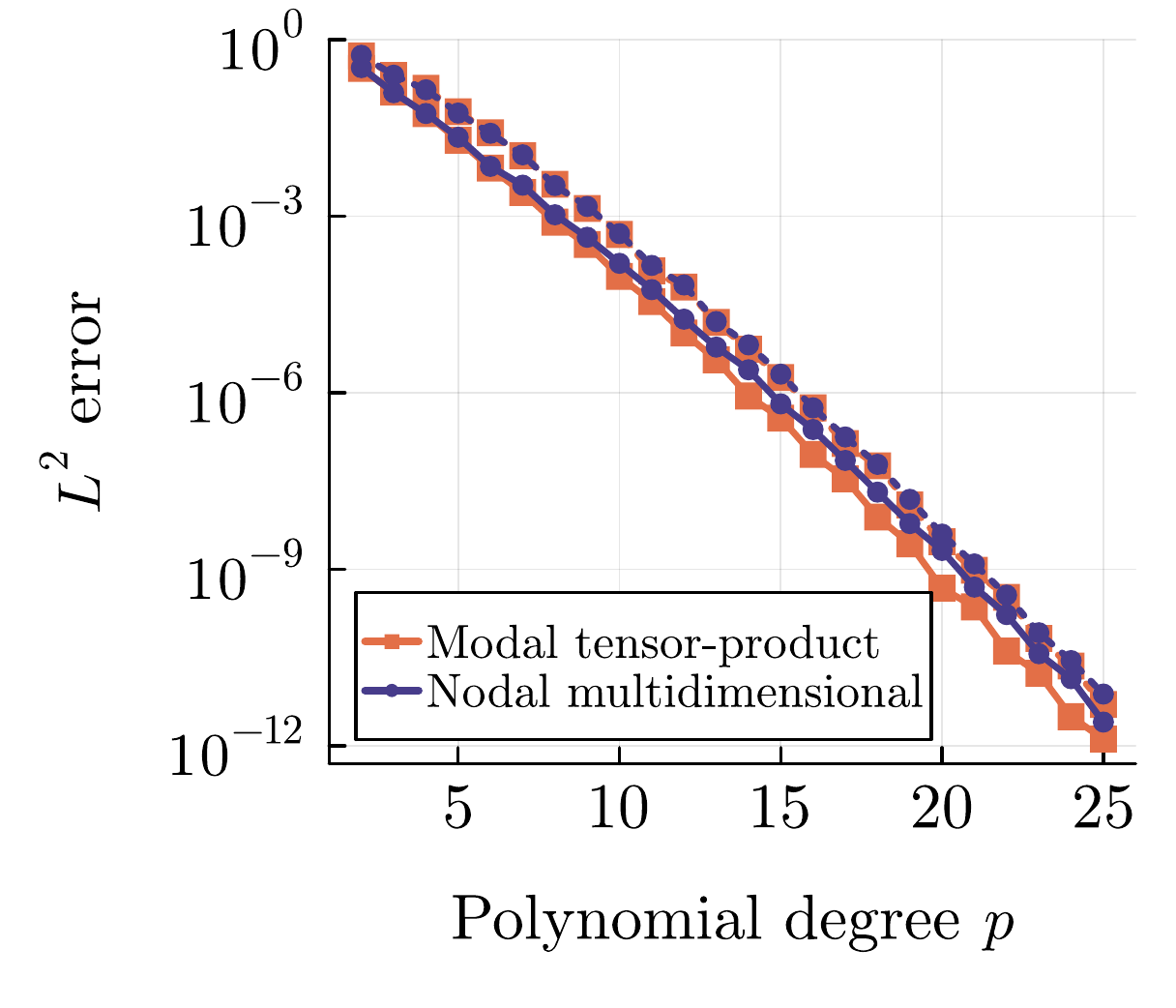}
\end{subfigure}\\
\begin{subfigure}{0.325\textwidth}
\centering
\includegraphics[height=36mm]{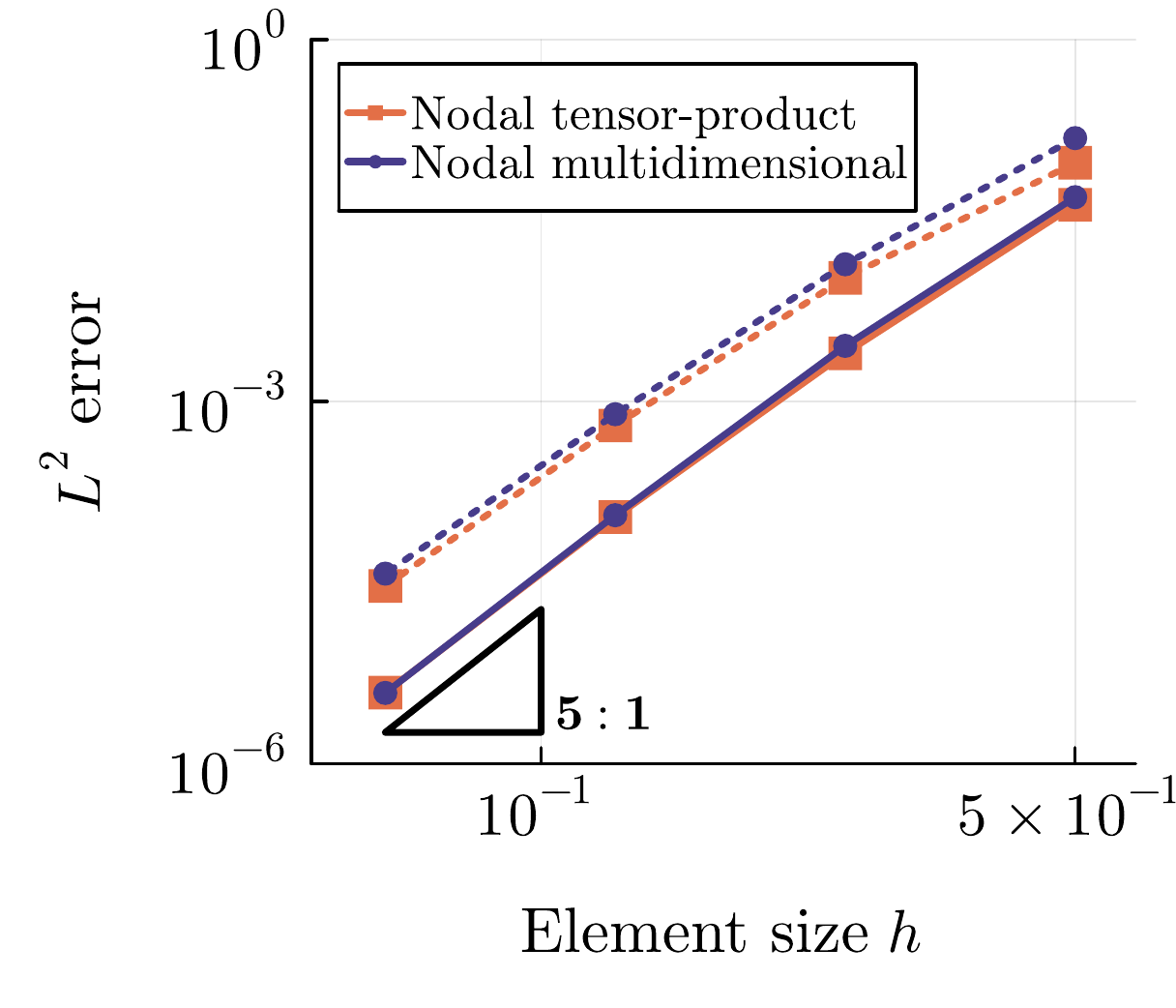}
\end{subfigure}
\begin{subfigure}{0.325\textwidth}
\centering
\includegraphics[height=36mm]{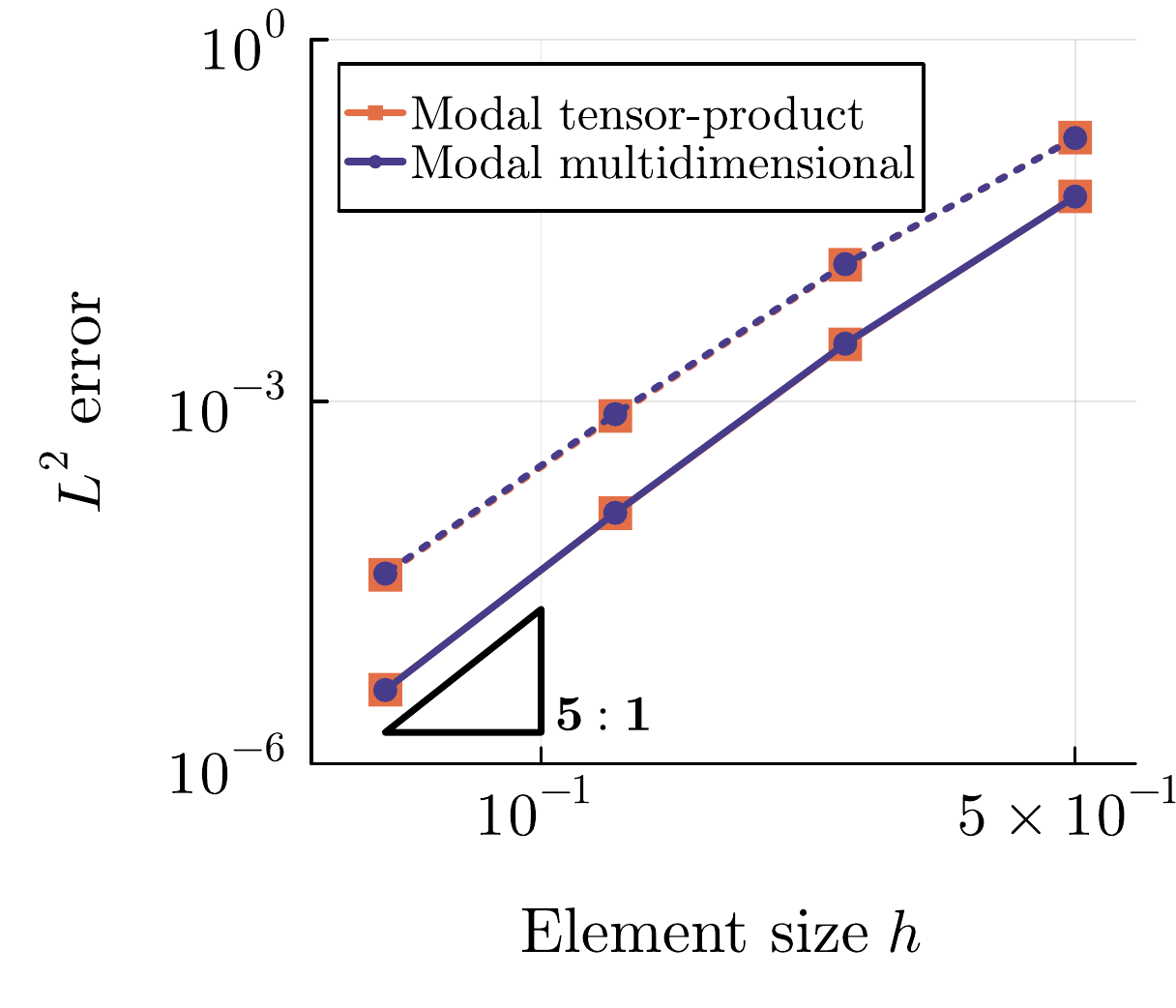}
\end{subfigure}
\begin{subfigure}{0.325\textwidth}
\centering
\includegraphics[height=36mm]{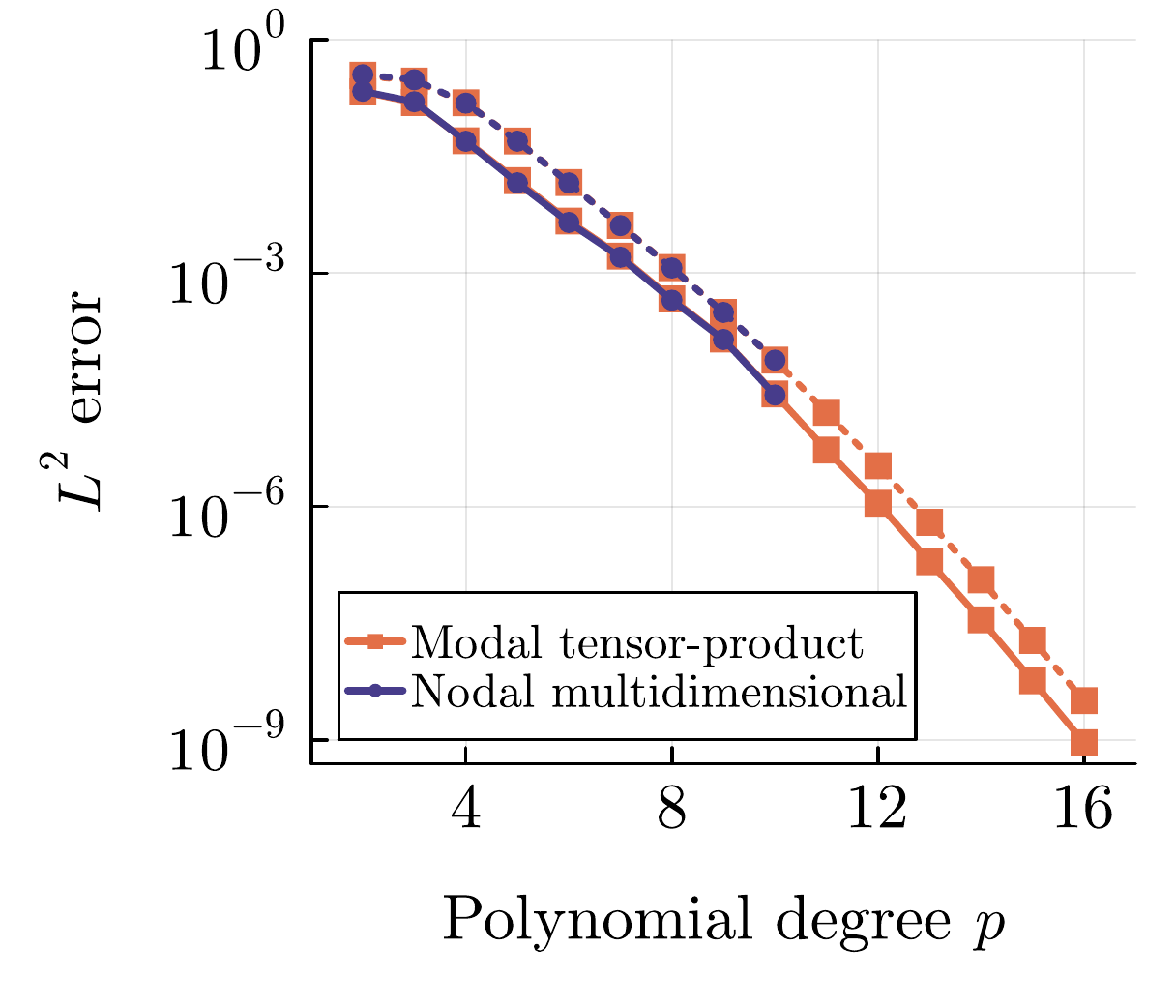}
\end{subfigure}
\caption{Convergence with respect to $h$ and $p$ for discretizations of the linear advection equation on triangles (top row) and tetrahedra (bottom row); solid and dashed lines denote upwind and central numerical fluxes, respectively}\label{fig:accuracy}
\end{figure}

\subsection{Operation count}\label{sec:cost}
\Cref{fig:cost} displays the number of floating-point operations incurred in evaluating the time derivative on a single element for each scheme at varying polynomial degrees, including the evaluation of the physical flux at all volume quadrature nodes and the evaluation of the numerical flux at all facet quadrature nodes.\footnote{The operation count for each algorithm (implemented in native Julia without calls to BLAS) was evaluated using \texttt{GFlops.jl}, which is available at \url{https://github.com/triscale-innov/GFlops.jl}.} Since the results in \cref{sec:accuracy} indicate that for a given number of elements and a given polynomial degree, the accuracy of the proposed tensor-product approach is comparable to that of a multidimensional scheme employing a symmetric quadrature rule, such an analysis is expected to provide a fair efficiency comparison, assuming that the implementations of all methods are similarly optimized and that the resulting algorithms are compute-bound rather than memory-bound. If only the reference-operator algorithms described in \cref{sec:reference_operator} are considered (i.e.\ ignoring the dashed lines in \cref{fig:cost}), we see that the sum-factorization algorithms used for the proposed tensor-product operators yield a significant reduction in operation count relative to the multidimensional operators for all orders of accuracy, scaling approximately as $O(p^{d+1})$ for both the nodal and modal tensor-product formulations, in contrast with the $O(p^{2d})$ scaling of the nodal and modal multidimensional formulations. If, however, memory is not a limiting factor, the physical-operator algorithms described in \cref{sec:physical_operator}, which require the precomputation and storage of operator matrices for each element, allow for the multidmensional operators to be competitive with the proposed tensor-product operators at lower polynomial degrees. Specifically, if we consider the dashed lines in \cref{fig:cost}, the physical-operator implementation of the multidimensional modal formulation, while asymptotically requiring $O(p^{2d})$ operations, is nevertheless cheaper than the $O(p^{d+1})$ sum-factorization implementation of the tensor-product modal formulation for $p \leq 9$ on  triangles and $p \leq 5$ on tetrahedra.
\begin{figure}[t!]
\centering
\begin{subfigure}{0.495\textwidth}
\centering
\includegraphics[height=48mm]{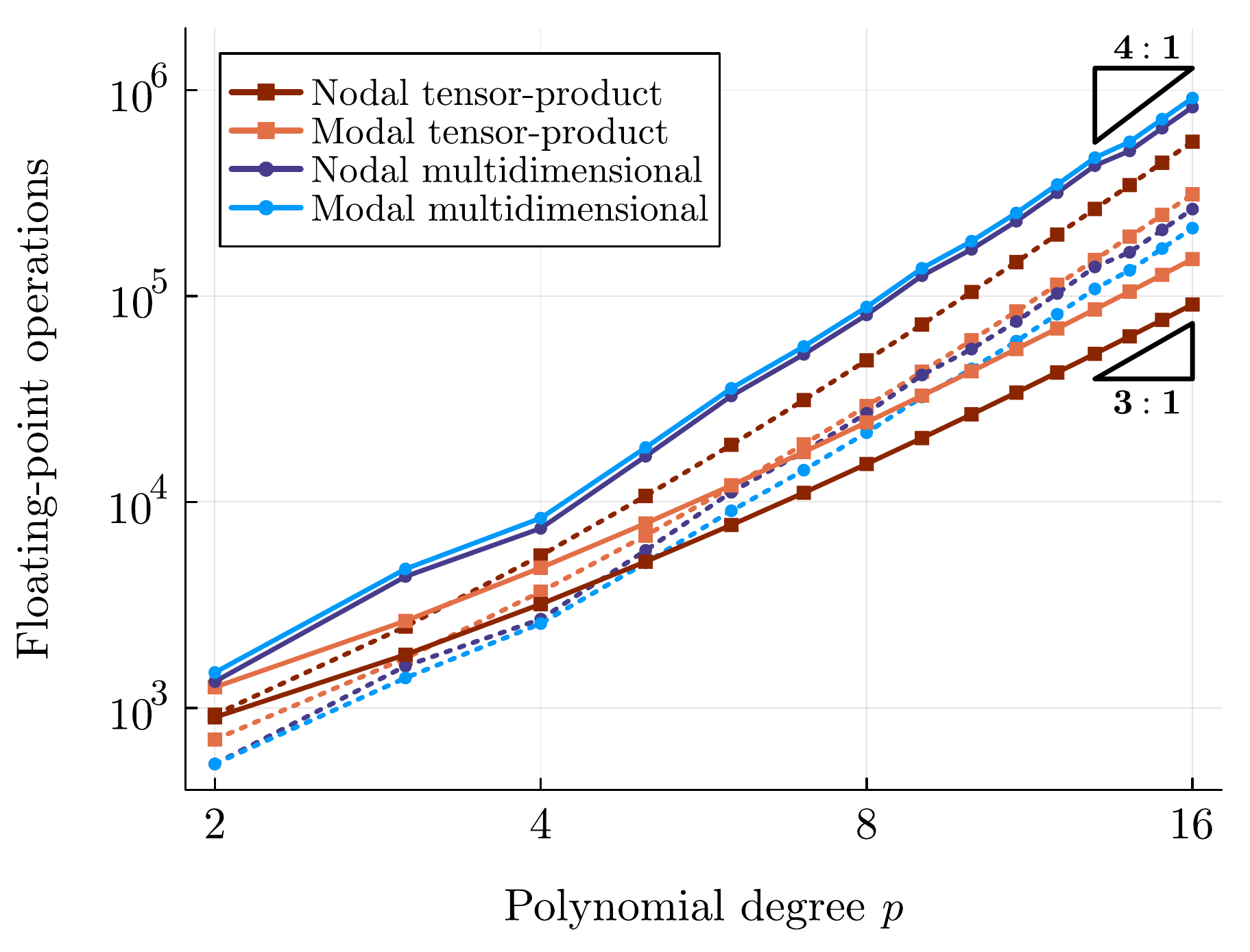}
\end{subfigure}
\begin{subfigure}{0.495\textwidth}
\centering
\includegraphics[height=48mm]{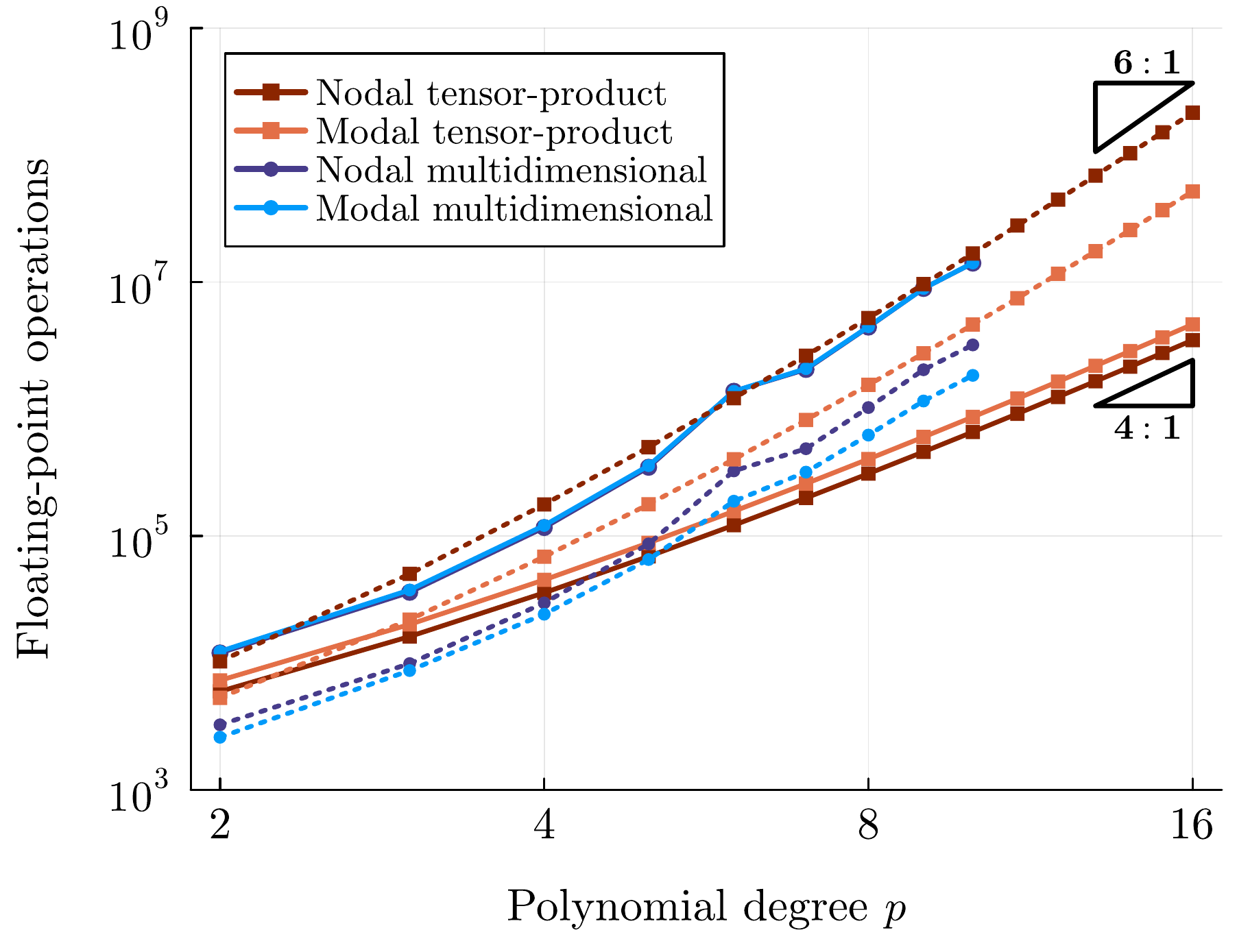}
\end{subfigure}
\caption{Floating-point operation count for local time derivative evaluation on triangles (left) and tetrahedra (right); solid and dashed lines denote reference-operator and physical-operator algorithms, respectively}\label{fig:cost}
\end{figure}

\section{Conclusions}\label{sec:conclusions}
In this paper, we have presented an extension of the SBP framework to tensor-product operators in collapsed coordinates, enabling the construction of efficient, stable, and conservative discontinuous spectral-element methods of arbitrary order on triangles and tetrahedra. Using a split formulation to obtain operators on curvilinear meshes, a projection onto the Proriol-Koornwinder-Dubiner polynomial basis to mitigate the time step restriction resulting from the singularity of the collapsed coordinate transformation, and a weight-adjusted approximation of the curvilinear modal mass matrix to explicitly obtain the local time derivative, the synergy of such technologies from a multitude of research communities has resulted in a promising methodology for the construction of efficient and robust spatial discretizations of conservation laws suitable for complex geometries. Future work includes the application of such operators within entropy-stable discretizations of nonlinear systems of conservation laws and to problems with diffusive terms, the development of suitable preconditioners for their use with implicit time integration, the construction of tensor-product SBP operators on prismatic and pyramidal elements, and efficiency comparisons involving practical problems in fluid dynamics and other disciplines.

\section*{Acknowledgments}
The authors are grateful for discussions with Jesse Chan and Gianmarco Mengaldo. Computations were performed on the Niagara supercomputer at the SciNet HPC Consortium \cite{ponce_niagara_19}, which is funded by the Canada Foundation for Innovation, the Government of Ontario, the Ontario Research Fund -- Research Excellence, and the University of Toronto.

\bibliographystyle{siamplain}
\bibliography{refs}
\end{document}